\newtheorem{thm}{Theorem}
\newtheorem{prop}[thm]{Proposition}
\newtheorem{lemma}[thm]{Lemma}
\newtheorem{claim}{Claim}
\newtheorem{rmk}[thm]{Remark}
\newtheorem{cor}[thm]{Corollary}
\newenvironment{pf}[1][Proof.]{\noindent \emph{#1.}}{}
\newenvironment{enui}{\begin{enumerate}[(i)]}{\end{enumerate}}
\newenvironment{enua}{\begin{enumerate}[(a)]}{\end{enumerate}}
\def\Int{{\operatorname{int}}}
\def\diag{{\operatorname{diag}}}
\def\const{\equiv}
\def\op{{\operatorname{op}}}
\def\Dot{\dot{\phantom{.}}}
\def\Crit{{\operatorname{Crit}}}
\def\codim{{\operatorname{codim}}}
\def\corank{{\operatorname{corank\,}}}
\def\Colon{:} %% {{\colon}}
\def\Ham{{\operatorname{Ham}}}
\def\c{{\operatorname{c}}}
\def\b{{\operatorname{b}}}
\def\hom{{\operatorname{hom}}}
\def\rank{\operatorname{rank}}
\def\Fix{{\operatorname{Fix}}}
\def\Then{\,\Longrightarrow\,}
\def\im{{\operatorname{im}}}
\def\hol{{\operatorname{hol}}}
\def\F{{\mathcal{F}}}
\def\id{{\operatorname{id}}}
\def\empty{\emptyset}
\def\nn{{\nonumber}}
\newcommand\wt[1]{{\widetilde{#1}}}
\newcommand{\BAR}[1]{{\overline{#1}}}
\def\al{{\alpha}}
\def\be{\beta}
\def\eps{\varepsilon}
\def\Om{\Omega}
\def\om{\omega}
\def\lam{\lambda}
\def\si{\sigma}
\def\Th{\Theta}
\renewcommand\phi{\varphi}
\def\SU{\operatorname{SU}}
\def\U{\operatorname{U}}
\newcommand{\N}{\mathbb{N}}
\newcommand{\Z}{\mathbb{Z}}
\newcommand{\R}{\mathbb{R}}
\def\C{\mathbb C}
\def\D{\mathbb{D}}
\def\g{\mathfrak g}
\def\A{\mathcal A}
\def\Lie{\operatorname{Lie}}
\def\pr{{\operatorname{pr}}}
\def\can{{\operatorname{can}}}
\def\sub{\subseteq}
\def\x{\times}
\def\wo{\setminus}
\def\one{\mathbf{1}}
\def\iso{\cong}
\def\d{\partial}
\def\lan{\langle}
\def\ran{\rangle}
\def\Wlog{w.l.o.g.\xspace}
\title[Coisotropic Submanifolds and Leafwise Fixed Points]{Coisotropic Submanifolds, Leafwise Fixed Points, %% 49 characters (Duke: running head $\leq$ 50)
and Presymplectic Embeddings} %% total 77 characters
\author{Fabian Ziltener (University of Toronto)}
\begin{document}

%%% jsg: abstract vor \maketitle !

\begin{abstract} Let $(M,\om)$ be a geometrically bounded symplectic manifold, $N\sub M$ a closed, regular (i.e.  ``fibering'') coisotropic submanifold, and $\phi\Colon M\to M$ a Hamiltonian diffeomorphism. The main result of this article is that the number of leafwise fixed points of $\phi$ is bounded below by the sum of the $\Z_2$-Betti numbers of $N$, provided that the Hofer distance between $\phi$ and the identity is small enough and the pair $(N,\phi)$ is non-degenerate. The bound is optimal if there exists a $\Z_2$-perfect Morse function on $N$. A version of the Arnol'd-Givental conjecture for coisotropic submanifolds is also discussed. As an application, I prove a presymplectic non-embedding result. 
\end{abstract} %% 90 words, Duke: $\leq$ 300 words.

\maketitle
\tableofcontents

%%% uncited ref and equations

\section{Main results}\label{sec:main}
\subsection*{Leafwise fixed points}
Let $(M,\om)$ be a symplectic manifold. We denote by $\Ham(M,\om)$ the group of Hamiltonian diffeomorphisms (see Section \ref{sec:back}). Let $\phi\in\Ham(M,\om)$ and $N\sub M$ be a coisotropic submanifold. We denote by $N_x:=N^\om_x\sub N$ the isotropic leaf through $x$ (see Section \ref{sec:back}). A leafwise fixed point of $\phi$ is by definition a point $x\in N$ such that $\phi(x)\in N_x$. We denote by $\Fix(\phi,N):=\Fix(\phi,N,\om)$ the set of such points. The first main result of this article addresses the following question:\\

\noindent{\bf Question A:} Provided that $\phi$ is close to the identity in a suitable sense, what lower bound on the number $\big|\Fix(\phi,N)\big|$ is there?\\

Note that if $N=M$ then $N_x=\{x\}$, for every $x\in N$, and hence $\Fix(\phi,N)$ is the set $\Fix(\phi)$ of ordinary fixed points of $\phi$. In the other extreme case $\dim N=\dim M/2$ the submanifold $N$ is Lagrangian, and we have $\Fix(\phi,N)=N\cap \phi^{-1}(N)$, provided that $N$ is connected. In order to state the first main result, we denote by $A(M,\om,N)$ the minimal area of $(M,\om,N)$ (see (\ref{eq:A M om N}) below), and by $d:=d^{M,\om}$ the Hofer distance (see (\ref{eq:d phi psi})). We call $N$ regular iff its isotropic leaf relation (see Section \ref{sec:back}) is a closed subset and a submanifold of $N\x N$. Assuming that $N$ is closed, this means that there exists a manifold structure on the set $N_\om$ of isotropic leaves of $N$ such that the canonical projection $\pi_N:N\to N_\om$ is a smooth fiber bundle. For the definitions of (geometric) boundedness of $(M,\om)$ and non-degeneracy for $(N,\phi)$ see Section \ref{sec:back}. The former is a mild condition on $(M,\om)$, examples include closed (compact without boundary) symplectic manifolds, cotangent bundles of closed manifolds, and symplectic vector spaces. Non-degeracy of $(N,\phi)$ naturally generalizes the usual non-degeneracy in the cases $N=M$ and $\dim N=\dim M/2$. For each topological space $X$, commutative ring $R$ and integer $i$ we denote by $b_i(X,R):=\rank_R H_i(X,R)$ the $i$-th Betti number of $X$ with coefficients in $R$. 
\begin{thm}\label{thm:leaf} Let $(M,\om)$ be a (geometrically) bounded symplectic manifold, and $N\sub M$ a closed, regular coisotropic submanifold. Then there exists a constant $C\in(0,\infty]$ such that $C\geq A(M,\om,N)$ and the following holds. If $\phi\in\Ham(M,\om)$ is such that $(N,\phi)$ is non-degenerate and 
\begin{equation}\label{eq:d phi id M}d(\phi,\id)<C,
\end{equation}
then  
\begin{equation}\label{eq:Fix phi}\big|\Fix(\phi,N)\big|\geq \sum_{i=0}^{\dim N}b_i(N,\Z_2).\end{equation} 
\end{thm}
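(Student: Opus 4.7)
The plan is to adapt the Lagrangian Floer-theoretic proof of the Arnol'd-type conjecture (following Chekanov, Hofer, Oh) to the coisotropic setting. The key observation is that leafwise fixed points of $\phi$ are in bijection with intersection points of two Lagrangian submanifolds of $M\x\BAR M$: the graph $\Lambda_\phi:=\mathrm{graph}(\phi)$ and the \emph{leafwise diagonal}
$$\Delta_N^{\mathrm{leaf}}:=\{(x,y)\in N\x N\,|\,y\in N_x\},$$
which is a closed Lagrangian submanifold of $M\x\BAR M$ of dimension $\dim M$ containing the ordinary diagonal over $N$. Regularity of $N$ ensures $\Delta_N^{\mathrm{leaf}}$ is smooth and properly embedded; the Lagrangian property is routine from $TN^\om\sub TN$.

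I would construct a Lagrangian Floer homology $HF(\Delta_N^{\mathrm{leaf}},\Lambda_\phi;\Z_2)$ whose generators are the intersection points $\Delta_N^{\mathrm{leaf}}\cap\Lambda_\phi\iso\Fix(\phi,N)$, and whose differential counts (mod $2$) isolated finite-energy $J$-holomorphic strips with boundary on $\Delta_N^{\mathrm{leaf}}\cup\Lambda_\phi$ for a generic compatible almost complex structure $J$. The constant $C$ would be chosen at least $A(M,\om,N)$ so that the Hofer bound $d(\phi,\id)<C$ forces the action bound on Floer strips to strictly beat the minimal area; combined with the geometric boundedness of $(M,\om)$, this precludes sphere and disk bubbling and yields Gromov compactness. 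Non-degeneracy of $(N,\phi)$ secures transversality of $\Delta_N^{\mathrm{leaf}}\cap\Lambda_\phi$, so the complex is well-defined. A Hofer-short continuation between $\phi$ and $\id$, combined with a Piunikhin--Salamon--Schwarz-type computation of the limit Floer homology for the clean intersection $\Delta_N^{\mathrm{leaf}}\cap\Delta_M=N$, yields $HF(\Delta_N^{\mathrm{leaf}},\Lambda_\phi;\Z_2)\iso H_*(N;\Z_2)$, whence
$$|\Fix(\phi,N)|\geq\dim_{\Z_2}HF(\Delta_N^{\mathrm{leaf}},\Lambda_\phi;\Z_2)=\sum_{i=0}^{\dim N}b_i(N,\Z_2).$$

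The main obstacle is the setup of the Floer theory for the pair $(\Delta_N^{\mathrm{leaf}},\Lambda_\phi)$ in the (possibly non-compact) symplectic manifold $M\x\BAR M$: the leafwise diagonal is in general neither monotone nor exact, so controlling both the energy of Floer strips and potential bubbling requires balancing $d(\phi,\id)$ against the minimal area $A(M,\om,N)$, which is the quantitative source of $C$. The subsidiary difficulty is the PSS-type identification at $\phi=\id$: the intersection $\Delta_N^{\mathrm{leaf}}\cap\Delta_M$ equals $N$ as a \emph{clean} (but non-transverse) intersection, so the limit Floer homology must be computed via a Morse--Bott spectral sequence or a Morse-function perturbation on $N$, and this is precisely where $\sum b_i(N,\Z_2)$ enters.
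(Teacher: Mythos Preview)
Your approach is conceptually sound but takes a genuinely different route from the paper's, and it leaves one essential step unjustified.

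The paper does \emph{not} work in $M\times\BAR M$ with the leafwise diagonal. Instead it passes to the symplectic quotient $(N_\om,\om_N)$ of $(N,\om|_N)$, sets $\wt M:=M\times N_\om$ with $\wt\om:=\om\oplus(-\om_N)$, and embeds $N$ itself as the Lagrangian $\wt N:=\{(x,N_x)\mid x\in N\}$. Since $\wt N$ is diffeomorphic to $N$, the Betti numbers match on the nose, and since $\wt\phi:=\phi\times\id_{N_\om}$ satisfies $\wt N\cap\wt\phi(\wt N)\cong\Fix(\phi,N)$ with transversality exactly when $(N,\phi)$ is non-degenerate, the paper can invoke Chekanov's Lagrangian theorem \emph{as a black box}. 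The only substantive new ingredient is the Key Lemma $A(M,\om,N)=A(\wt M,\wt\om,\wt N)$, proved by lifting the $N_\om$-component of a disk in $\wt M$ back to $N$ and gluing. No Floer theory is redone, no Morse--Bott or PSS argument is needed.

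Your leafwise diagonal $\Delta_N^{\mathrm{leaf}}\sub M\times\BAR M$ is the approach Ginzburg used to get existence of a single leafwise fixed point (the paper cites this in its related-work discussion). It has dimension $\dim M$, not $\dim N$, so its Betti numbers are wrong for the conclusion; this is why you are forced into the clean-intersection computation at $\phi=\id$, which is real extra work that the paper avoids entirely. More seriously, you assert that choosing $C\geq A(M,\om,N)$ controls bubbling for strips with boundary on $\Delta_N^{\mathrm{leaf}}$ and $\Lambda_\phi$, but you give no argument relating $A(M,\om,N)$ to the minimal positive area of disks in $M\times\BAR M$ with boundary on $\Delta_N^{\mathrm{leaf}}$. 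That comparison is exactly the analogue of the paper's Key Lemma in your setting, and without it the constant $C$ in your sketch has no reason to satisfy $C\geq A(M,\om,N)$.
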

If $\codim N\neq0,1,\dim M/2$ then this theorem appears to be the first result implying that $\big|\Fix(\phi,N)\big|\geq 2$, without assuming that $\phi$ is $C^1$-close to the identity. It generalizes a result for the case $\dim N=\dim M/2$, which is due to Yu.~V.~Chekanov, see the Main Theorem in \cite{Ch}. The bound (\ref{eq:Fix phi}) is sharp, provided that there exists a $\Z_2$-perfect Morse function on $N$, see Theorem \ref{thm:f} below.  
\subsubsection*{Examples}
\label{xpl:action} A large class of examples of regular coisotropic submanifolds is given as follows. Let $(M,\om)$ be a symplectic manifold, and $G$ a compact, connected Lie group with Lie algebra $\g$. We fix a Hamiltonian action of $G$ on $M$, and an (equivariant) moment map $\mu\Colon M\to \g^*$. Assume that $\mu$ is proper and the action of $G$ on $N:=\mu^{-1}(0)\sub M$ is free. Then $N$ is a closed, regular coisotropic submanifold. As a concrete example, let $0<k\leq n$ be integers, and consider $M:=\C^{k\x n}$ with the standard symplectic structure $\om:=\om_0$, and the action of the unitary group $G:=\U(k)$ on $\C^{k\x n}$ by multiplication from the left. A moment map for this action is given by $\mu(\Th):=\frac i2(\one-\Th\Th^*)$, and $N=\mu^{-1}(0)$ is the Stiefel manifold $V(k,n):=\big\{\Th\in\C^{k\x n}\,\big|\,\Th\Th^*=\one\big\}$. The triple $(M,\om,N)$ satisfies the hypotheses in Theorem \ref{thm:leaf}. Furthermore, we have $A\big(\C^{k\x n},\om_0,V(k,n)\big)=\pi$ (see Proposition \ref{prop:Stiefel} below), and $\sum_{i=0}^{\dim V(k,n)}b_i\big(V(k,n),\Z_2\big)=2^k$. (This follows for example from \cite{GHV}, Theorem I on p.~224, and the fact that the integral cohomology of $V(k,n)$ is torsion-free.) Let $\phi\in\Ham(\C^{k\x n},\om_0)$ be such that $\big(V(k,n),\phi)$ is non-degenerate and $d(\phi,\id)<\pi$. Then Theorem \ref{thm:leaf} implies that $\big|\Fix(\phi,V(k,n))\big|\geq2^k$. 

This bound is sharp, since there exists a $\Z_2$-perfect Morse function on $V(k,n)$ (see example (\ref{Stiefel}) after Theorem \ref{thm:f} below). Moreover, the condition $C\geq A\big(\C^{k\x n},\om_0,V(k,n)\big)$ in Theorem \ref{thm:leaf} is also sharp, in the sense that the conclusion of the theorem is wrong if we choose $C>A\big(\C^{k\x n},\om_0,V(k,n)\big)$, see Proposition \ref{prop:Stiefel}. Note that in the case $k=1$ we obtain an improvement of a result by H. Hofer, \cite{Ho}, Proposition 1.4. That result states that $\Fix(\phi,S^{2n-1})\neq\emptyset$, provided that $d_\c(\phi,\id)\leq\pi$. Here $d_\c$ denotes the compactly supported Hofer distance (see (\ref{eq:d c}) below).

\label{V} Another family of examples of regular coisotropic submanifolds arises as follows. Let $(X,\si)$ be a closed symplectic manifold, $\pi\Colon E\to X$ a closed smooth fiber bundle, and $H\sub TE$ a horizontal subbundle. We define $V^*E$ to be the vertical cotangent bundle of $E$. Its fiber over a point $e\in E$ is the space $T_e^*E_{\pi(e)}$. We denote the zero-section of this bundle by $N$. Furthermore, we define a closed two-form on $V^*E$ as follows. We denote by $\pi_X\Colon V^*E\to X$ the canonical projection, by $\om_\can$ the canonical symplectic form on $T^*E$, and by $\pr^H_e\Colon T_eE\to T_eE_{\pi(e)}$ the linear projection along the subspace $H_e\sub T_eE$, for $e\in E$. We define
\[\label{eq:iota H V}\iota_H\Colon V^*E\to T^*E,\quad \iota_H(e,\al):=\big(e,\al\circ\pr^H_e\big),\quad \Om_{\si,H}:=\pi_X^*\si+\iota_H^*\om_\can.\]
Then $\Om_{\si,H}$ is a closed two-form on $V^*E$. Furthermore, by Proposition 3.2 in \cite{Ma}, there exists an open neighborhood $M$ of the zero-section $N\sub V^*E$ on which $\Om_{\si,H}$ is non-degenerate. We fix such an $M$. Then the submanifold $N\sub M$ is regular coisotropic (see Proposition \ref{prop:N} below). Assume now that the base manifold $X$ is symplectically aspherical, i.e. $\int_{S^2}u^*\si=0$, for every $u\in C^\infty(S^2,X)$. Then $A(M,\Om_{\si,H},N)=\infty$ (see again Proposition \ref{prop:N} below). So in this case the only possible constant $C$ as in Theorem \ref{thm:leaf} is $\infty$, and for this constant condition (\ref{eq:d phi id M}) is vacuous. 
\subsubsection*{Idea of proof of Theorem \ref{thm:leaf}}
Assume that the hypotheses of Theorem \ref{thm:leaf} are satisfied. The strategy of the proof is to find a Lagrangian embedding of $N$ into a suitable symplectic manifold, and then apply the Main Theorem in \cite{Ch}. Recall that $N_\om$ denotes the set of isotropic leaves of $N$. Since $N$ is regular, there exists a unique manifold structure on $N_\om$ such that the projection $\pi_N\Colon N\to N_\om$ is a submersion (see Lemma \ref{le:X/R} below). We denote by $\om_N$ the unique symplectic structure on $N_\om$ such that $\pi_N^*\om_N=\om$, and we define 
\begin{equation}\label{eq:wt M wt om}\wt M:=M\x N_\om,\quad \wt\om:=\om\oplus(-\om_N),
\end{equation}
\begin{equation}\label{eq:iota N wt N} \iota_N\Colon N\to \wt M,\quad \iota_N(x):=(x,N_x),\quad \wt N:=\iota_N(N).
\end{equation} 
Then $\iota_N$ is an embedding of $N$ into $\wt M$ that is Lagrangian with respect to the symplectic form $\wt\om$ on $\wt M$ (see Lemma \ref{le:N} below). In order to satisfy the hypotheses of Chekanov's result, the inequality $A(M,\om,N)\leq A(\wt M,\wt\om,\wt N)$ is crucial. It follows from Key Lemma \ref{le:A M om N} below. The idea of its proof is that given a smooth map $\wt u=(v,w')\Colon \D\to \wt M=M\x N_\om$ such that $\wt u(S^1)\sub\wt N$, we may lift $w'$ to a map $w\Colon [0,1]\x S^1\to N$ and concatenate this with $v$. We thus obtain a map $u\Colon \D\to M$ with boundary on an isotropic leaf, satisfying $\int u^*\om=\int\wt u^*\wt\om$. The method described here generalizes a standard way of reducing the case $\dim N=\dim M$ to the Lagrangian case, see for example \cite{Fl}. 
\subsubsection*{Discussion of optimality}\label{subsec:disc}
Let $M$ be a manifold, $f:M\to \R$ a Morse function, and $R$ a commutative ring. We denote by $\Crit f\sub M$ the set of critical points of $f$. Recall that $f$ is called $R$-perfect iff $|\Crit f|=\sum_{i=0}^{dim M}b_i(M,R)$. The next result implies that the estimate (\ref{eq:Fix phi}) is sharp if there exists a $\Z_2$-perfect Morse function on the coisotropic submanifold $N$. It actually shows that in this case (\ref{eq:Fix phi}) is sharp, even if the condition (\ref{eq:d phi id M}) is replaced by the much stronger condition that $\phi$ is $C^1$-close to the identity. We denote by $\Ham_\c(M,\om)$ the group of compactly supported Hamiltonian diffeomorphisms of $M$. 
\begin{thm}\label{thm:f} Let $(M,\om)$ be a symplectic manifold, $N\sub M$ a closed regular coisotropic submanifold, $f\Colon N\to\R$ a Morse function, $\iota:M\to\R^{4n}$ an embedding, and $\eps>0$. Then there exists $\phi\in\Ham_\c(M,\om)$ such that $(N,\phi)$ is non-degenerate, and
\[\Fix(\phi,N)=\Crit f,\quad \big\Vert\iota\circ\phi\circ\iota^{-1}-\id\big\Vert_{C^1(\iota(M))}<\eps.\] 
\end{thm}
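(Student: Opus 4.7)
The construction is purely local near $N$, so I would begin by invoking the coisotropic neighborhood theorem of Gotay to identify a tubular neighborhood of $N$ in $M$ symplectically with a neighborhood of the zero section in the vertical cotangent bundle $V^*N$ associated to the fibration $\pi_N\Colon N\to N_\om$, equipped with a closed two-form of the type $\Om_{\om_N,\mathcal{D}}$ for some choice of horizontal distribution $\mathcal D$ on $N$. Under this identification, $N$ corresponds to the zero section and the isotropic leaves of $N$ correspond to the fibers of $\pi_N$; the theorem thus reduces to constructing the required Hamiltonian diffeomorphism in a neighborhood of the zero section in $V^*N$. Pick a smooth cut-off $\chi$ equal to $1$ near the zero section and compactly supported in the model tube, and for $\eps'>0$ to be chosen set
\[H_{\eps'}(e,\al):=\eps'\,\chi(e,\al)\,f(e),\]
obtained by lifting $f$ along the bundle projection $V^*N\to N$. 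Define $\phi_{\eps'}:=\phi^1_{H_{\eps'}}$ and extend by the identity to an element of $\Ham_\c(M,\om)$.

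In local trivializing coordinates $(x,q,p)$ -- with $x$ a chart on $N_\om$, $q$ on a fiber of $\pi_N$, and $p$ on the dual fiber -- the two-form reads $\om_N+dp\wedge dq$, and along the zero section the equations of motion are
\[\dot x=\eps'X^{\om_N}_{f(\cdot,q)}(x),\quad \dot q=0,\quad \dot p=-\eps'\,\d_q f(x,q).\]
The leafwise fixed-point condition for a point $(x_0,q_0,0)\in N$ reads $x(1)=x_0$ and $p(1)=0$; dividing these equations by $\eps'$ and letting $\eps'\downarrow 0$ yields $df(x_0,q_0)=0$, so the zero set of the limit system equals $\Crit f$. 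A direct computation shows that the derivative of the rescaled system at a point of $\Crit f$ is, up to the invertible factor $\om_N^{-1}$ on the $x$-block, the Hessian of $f$, which is invertible by the Morse hypothesis. The implicit function theorem, combined with compactness of $N$ and a uniform lower bound on the rescaled system away from $\Crit f$, then gives $\Fix(\phi_{\eps'},N)=\Crit f$ for every sufficiently small $\eps'>0$, and the same Hessian calculation verifies non-degeneracy of $(N,\phi_{\eps'})$ in the sense of Section \ref{sec:back}. Since $\phi_{\eps'}\to\id$ in $C^\infty$ as $\eps'\downarrow 0$ (the generator has $C^\infty$-size $O(\eps')$ with fixed compact support), the estimate
\[\big\Vert\iota\circ\phi_{\eps'}\circ\iota^{-1}-\id\big\Vert_{C^1(\iota(M))}<\eps\]
also holds for small enough $\eps'$; setting $\phi:=\phi_{\eps'}$ finishes the construction.

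The principal technical point is the passage from the $\eps'=0$ identification of zeros with $\Crit f$ to the exact equality $\Fix(\phi_{\eps'},N)=\Crit f$ for small $\eps'>0$: one needs the local bijection supplied by the implicit function theorem at each critical point of $f$ \emph{together with} the exclusion of spurious zeros elsewhere, which follows from the $C^1$-convergence of the rescaled leafwise fixed-point equation to its $\eps'=0$ limit over the compact manifold $N$. A subsidiary bookkeeping step is to check that the abstract definition of non-degeneracy of $(N,\phi)$ from Section \ref{sec:back} is indeed detected by the Hessian calculation above; this is immediate in the extremes $N=M$ and $N$ Lagrangian, and goes through verbatim in the intermediate case.
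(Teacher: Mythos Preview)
Your overall strategy matches the paper's: reduce to the local model via the coisotropic neighborhood theorem (the paper cites Marle's version rather than Gotay's), take the Hamiltonian to be a cut-off lift $f\circ\pi$, and show that for small time the leafwise fixed points coincide with $\Crit f$. The non-degeneracy and $C^1$-smallness steps are also handled the same way in spirit.

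The genuine divergence is in how one proves $\Fix(\phi_{\eps'},N)\sub\Crit f$. You argue by writing the flow in local trivializing coordinates, rescaling the leafwise fixed-point condition by $1/\eps'$, and invoking the implicit function theorem plus a compactness argument to exclude spurious zeros. The paper instead proceeds coordinate-free: it first shows (Lemma~\ref{le:f M}) that the projection of $X_F$ to $N$ is everywhere $\mathcal D$-horizontal, then proves a geometric estimate (Proposition~\ref{prop:X F}) to the effect that any horizontal path whose endpoints lie in the same leaf satisfies $d(x(0),x(t))\leq C\ell(x)^2$, and finally feeds this quadratic bound into a ``fast almost-periodic orbits are constant'' lemma (Lemma~\ref{le:ell f}) to conclude $X_F(x_0)=0$. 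Your approach is more computational and closer to bifurcation theory; the paper's is more dynamical and avoids the coordinate bookkeeping entirely.

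One caution about your version: the assertion that the two-form reads exactly $\om_N+dp\wedge dq$ in local coordinates presumes the chosen horizontal distribution $\mathcal D$ is flat in those coordinates, which is generically false---there are curvature terms off the zero section. This does not actually break your leading-order analysis, since those corrections are $O(p)=O(\eps')$ and disappear in the rescaled limit, but it should be said explicitly. Likewise your $\dot q=0$ is really the statement $\pi_*X_F\in\mathcal D$ (the content of Lemma~\ref{le:f M}), which holds everywhere and not just on the zero section; stating it that way would make the exclusion-of-spurious-zeros step cleaner.
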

The proof of this result relies on a normal form theorem for a neighborhood of $N$, which is due to Marle, and on the fact that fast almost periodic orbits of a vector field are constant. It also uses an estimate for the distance between the initial and the end point of a path $x$ in foliation, assuming that these points lie in the same leaf, and that $x$ is tangent to a given horizontal distribution. 

Examples of manifolds admitting a $\Z_2$-perfect Morse function include the following: 
\begin{enui}\item\label{Stiefel} The real, complex and quaternonian Stiefel manifolds. (See \cite{TT}, the remarks after Ex.~3.14 on p.~197, and the definition of tautness on p.~182.) 
\item Compact symmetric spaces that admit a symmetric embedding into Euclidian space. This includes the real, complex and quaternonian Grassmannian. (See \cite{DV} Theorem 1.2 and example 1. on p.~7.) 
\item\label{G T} Quotients $G/T$, where $G$ is a compact connected semi-simple Lie group, and $T\sub G$ is a maximal torus. (This follows for example from \cite{Du} Theorems 4 and 5 on p. 125.) Note that for $G:=\SU(n)$ $G/T$ is diffeomorphic to the manifold of complete flags in $\C^n$. 
\item\label{Ham S 1} Symplectic manifolds that admit a Hamiltonian $S^1$-action whose fixed points are isolated, see for example \cite{GGK}, the theorem on p.~22. 
\item Simply connected closed manifolds of dimension at least 6, whose homology with $\Z$-coefficients is torsion-free. (This follows from \cite{An} Theorem 4.2.4(ii) on p. 112, Definition 4.1.1  on p. 106, and formula (4.2.4) on p. 111.) 
\end{enui}
Note that example (\ref{Ham S 1}) generalizes example (\ref{G T}). Observe also that the product of two manifolds allowing a $\Z_2$-perfect Morse function, has the same property. (This follows from \cite{An} Theorem 4.1.5 on p. 109 and the K\"unneth formula.) 

Consider now $\C^{k\x n}$ with the standard symplectic form $\om_0$, and the Stiefel manifold $V(k,n)\sub \C^{k\x n}$. Then by the next result the condition $C\geq A\big(\C^{k\x n},\om_0,V(k,n)\big)$ in Theorem \ref{thm:leaf} is sharp. We denote by $d_\c$ the compactly supported Hofer distance (see Section \ref{sec:back}). 
\begin{prop}\label{prop:Stiefel} We have $A\big(\C^{k\x n},\om_0,V(k,n)\big)=\pi$. Furthermore, for every $C>\pi$ there exists $\phi\in\Ham_\c(\C^{k\x n},\om_0)$ such that $d_\c(\phi,\id)<C$ and $\phi(V(k,n))\cap V(k,n)=\empty$. 
\end{prop}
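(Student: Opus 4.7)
The plan has two parts. For the identity $A(\C^{k\x n},\om_0,V(k,n))=\pi$, I would first identify the isotropic leaves of $V(k,n)=\mu^{-1}(0)$ with the $U(k)$-orbits, each of which is diffeomorphic to $U(k)$ via $U\mapsto U\Theta_0$ since the action is free. Because $\pi_2(\C^{k\x n})=0$, the $\om_0$-area of a disk $u\colon(\D,\d\D)\to(\C^{k\x n},\text{leaf})$ depends only on the class of its boundary in $\pi_1(U(k))\iso\Z$. Using Stokes' theorem with the primitive $\lam_0:=\tfrac12\Im\,\text{tr}(\Theta^*d\Theta)$ of $\om_0$, together with $\Theta_0\Theta_0^*=\one_k$ (so that $\text{tr}(\Theta_0^*U^*dU\,\Theta_0)=\text{tr}(U^*dU)$) and the identity $\text{tr}(U^{-1}dU)=d\log\det U$, I would show that for a loop $\gamma(t)=U(t)\Theta_0$ with $t\in[0,2\pi]$:
\[
\int_\D u^*\om_0=\int_{S^1}\gamma^*\lam_0=\tfrac12\,\Im\!\int_0^{2\pi}\!\text{tr}(U^*\dot U)\,dt=\pi\deg(\det\circ U).
\]
The set of possible disk areas is therefore $\pi\Z$, giving $A=\pi$; the generator $U(t)=\diag(e^{it},1,\dots,1)$ with the explicit filling $u(z)=\diag(z,1,\dots,1)\Theta_0$ realizes the value $\pi$.

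For the displacement statement, the key observation is that the first row $\theta_1$ of any $\Theta\in V(k,n)$ lies on $S^{2n-1}\sub\C^n$. I would look for a Hamiltonian of the form $H_t(\Theta):=\chi(\Theta)\,h_t(\theta_1)$, where $\chi\colon\C^{k\x n}\to[0,1]$ is a compactly supported cutoff equal to $1$ on a bounded neighborhood $\mathcal U$ containing $V(k,n)$ together with the (non-cut-off) forward orbit of $V(k,n)$. On $\mathcal U$ the flow acts by $\Theta\mapsto(\psi_t(\theta_1),\theta_2,\dots,\theta_k)$, where $\psi_t$ is the time-$t$ flow of $h_t$ on $\C^n$. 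If $\psi_1$ displaces $S^{2n-1}$, then $|\psi_1(\theta_1)|\neq1$, so the image cannot lie in $V(k,n)$ and $\phi(V(k,n))\cap V(k,n)=\empty$. To produce such a $\psi_1$ with small Hofer norm I would invoke Hofer's result that the displacement energy of the closed unit ball $\bar B^{2n}(1)\sub(\C^n,\om_0)$ equals $\pi$: for any $C>\pi$ there exists a compactly supported $h_t$ with $\|h\|_H<C$ whose time-$1$ flow displaces $\bar B^{2n}(1)$, and \emph{a fortiori} $S^{2n-1}$.

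The main technical obstacle is ensuring that the spatial cutoff does not enlarge the Hofer norm. After shifting each $h_t$ by a time-dependent constant so that $\min h_t\le 0\le\max h_t$ (which changes neither the Hamiltonian vector field nor $\text{osc}(h_t)$), the bounds $0\le\chi\le 1$ give $\min h_t\le\chi(\Theta)h_t(\theta_1)\le\max h_t$ for every $\Theta$, hence $\text{osc}(H_t)\le\text{osc}(h_t)$ and $d_\c(\phi,\id)\le\|H\|_H\le\|h\|_H<C$. A secondary check is that $\mathcal U$ can indeed be chosen bounded: $V(k,n)$ is compact and $h_t$ has compact support, so the trajectory $\{\Phi_t(V(k,n)):t\in[0,1]\}$ lies in a bounded region, which the set $\{\chi=1\}$ can cover.
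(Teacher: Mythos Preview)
Your proposal is correct; both halves go through. The comparison with the paper's argument is worth recording.

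For the computation $A=\pi$, the paper takes a less direct path: given a disk $u$ with boundary $u|_{S^1}=g_0(\cdot)u(1)$ for some $g_0\colon S^1\to U(k)$ of determinant-degree $d$, it defines $h_0(z)=\diag(z^d,1,\dots,1)g_0(z)^{-1}$, extends $h_0$ over the disk (possible since $\det\circ h_0$ has degree $0$), and then invokes a moment-map identity $(hu)^*\om_0=u^*\om_0-d\langle\mu\circ u,h^{-1}dh\rangle$ together with Stokes to replace $u$ by the explicit disk $v(z)=\diag(z^d,1,\dots,1)u(1)$, whose area is $d\pi$. Your route is more elementary: because $\om_0$ is exact with primitive $\lam_0=\tfrac12\Im\,\mathrm{tr}(\Theta^*d\Theta)$, Stokes reduces the disk area to a boundary integral, and the identity $\Theta_0\Theta_0^*=\one_k$ plus $\mathrm{tr}(U^{-1}dU)=d\log\det U$ yields $\pi\deg(\det\circ U)$ in one line. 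This avoids the auxiliary homotopy and the moment-map formula (which the paper imports from another article), at no real cost.

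For the displacement statement the two arguments are essentially the same; the only difference is the choice of projection. The paper projects onto the single entry $\Theta^1_{\phantom{1}1}\in\C$, observes that $V(k,n)$ maps onto the closed unit disk $\D\sub\C$, and uses that $\D$ can be displaced in $(\C,\om_0)$ with Hofer cost arbitrarily close to $\pi$. You project onto the whole first row $\theta_1\in\C^n$, which lands on $S^{2n-1}$, and appeal to Hofer's computation of the displacement energy of $\bar B^{2n}(1)$. Both work; the paper's one-dimensional projection is marginally more concrete (the displacing Hamiltonian on $\C$ can be written down by hand), while your version appeals to a stronger but standard result. Your handling of the cutoff---shifting $h_t$ so that $\min h_t\le 0\le\max h_t$ before multiplying by $\chi$---is the same mechanism the paper uses, and your boundedness check for the orbit set is correct.
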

\subsubsection*{Arnol'd-Givental conjecture (AGC) for coisotropic submanifolds} Recall that a map from a set to itself is called an involution iff applying it twice yields the identity. Furthermore, a diffeomorphism $\psi$ from a symplectic manifold $(M,\om)$ to itself is called anti-symplectic iff $\psi^*\om=-\om$. The following conjecture naturally generalizes the usual (Lagrangian) AGC to ``product''-coisotropic submanifolds in products of symplectic manifolds.\\

\noindent{\bf Conjecture.} \emph{Let $(M_i,\om_i)$, $i=1,2$ be symplectic manifolds, with $M_1$ closed, and let $L\sub M_2$ be a closed Lagrangian submanifold. Consider the product $M:=M_1\x M_2$ with the symplectic structure $\om:=\om_1\oplus\om_2$, and let $N:=M_1\x L$. Assume that there exists an anti-symplectic involution $\psi\Colon M_2\to M_2$ such that $\Fix(\psi)=L$. Let $\phi\in\Ham(M,\om)$ be such that the pair $(N,\phi)$ is non-degenerate. Then inequality (\ref{eq:Fix phi}) holds.}\\

In the case in which $M_1$ is a point this is the usual (Lagrangian) AGC. (See for example \cite{Fr}, where it is assumed that $M$ is compact.) 
\begin{prop}\label{prop:Arnold} If the \emph{Lagrangian} AGC is true then the same holds for the above Conjecture. 
\end{prop}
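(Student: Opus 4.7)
The plan is to reduce to the Lagrangian AGC by combining the doubling construction from the proof sketch of Theorem \ref{thm:leaf} with an anti-symplectic involution on the doubled space built out of $\psi$ together with the swap of the two copies of $M_1$.

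First I would record that the isotropic leaves of $N = M_1\x L$ inside $(M,\om)=(M_1\x M_2,\om_1\oplus\om_2)$ are precisely the sets $\{x\}\x L$ for $x\in M_1$, since $L$ is Lagrangian in $(M_2,\om_2)$. Hence $N$ is regular, the leaf space is canonically $N_\om=M_1$, and $\om_N=\om_1$. Applying (\ref{eq:wt M wt om})--(\ref{eq:iota N wt N}) then gives $\wt M = M_1\x M_2\x M_1$ with $\wt\om = \om_1\oplus\om_2\oplus(-\om_1)$, the embedding $\iota_N(x,y)=(x,y,x)$, and, by Lemma \ref{le:N}, the closed Lagrangian submanifold $\wt N = \{(x,y,x)\,:\,x\in M_1,\,y\in L\}$.

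Next I would define $\wt\psi\Colon\wt M\to\wt M$ by $\wt\psi(x,y,z) := (z,\psi(y),x)$. Using $\psi^*\om_2=-\om_2$ and the explicit action of $\wt\psi$ on the three projections, a direct computation gives $\wt\psi^*\wt\om=-\wt\om$; clearly $\wt\psi$ is a smooth involution with $\Fix(\wt\psi)=\wt N$. I would then lift $\phi\in\Ham(M,\om)$ to $\wt\phi := \phi\x\id_{N_\om}\in\Ham(\wt M,\wt\om)$; if $H_t$ generates $\phi$, the pulled-back Hamiltonian $\wt H_t(m,\ell):=H_t(m)$ generates $\wt\phi$. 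An elementary check then shows that the map
\[\Fix(\phi,N)\;\longrightarrow\;\wt N\cap\wt\phi^{-1}(\wt N),\quad x\mapsto\iota_N(x),\]
is a bijection, since both conditions amount to $\pr_{M_1}\phi(x,y)=x$ together with $\pr_{M_2}\phi(x,y)\in L$. The same argument, linearized, shows that the coisotropic pair $(N,\phi)$ is non-degenerate in the sense of Section \ref{sec:back} iff the Lagrangian pair $(\wt N,\wt\phi)$ is non-degenerate in the usual sense. Applying the assumed Lagrangian AGC to $(\wt M,\wt\om,\wt N,\wt\psi)$ with the Hamiltonian diffeomorphism $\wt\phi$ then yields $|\wt N\cap\wt\phi^{-1}(\wt N)|\geq\sum_i b_i(\wt N,\Z_2)$, which combined with the bijection above and the diffeomorphism $\iota_N\Colon N\to\wt N$ gives (\ref{eq:Fix phi}).

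The hard part will be the compatibility of the two non-degeneracy notions: one must verify that the transversality condition in the definition of non-degenerate coisotropic pair (which involves the leaf directions $T_xN_x\sub T_xN$ and the graph of $d\phi_x$) corresponds exactly to Lagrangian transversality of $\wt N$ and $\wt\phi(\wt N)$ at $\iota_N(x)$. This is a linear algebra computation using the splitting $T\wt M = TM\oplus TN_\om$ and the fact that $d\iota_N$ sends the leaf direction of $N$ into the diagonal of the two $M_1$ factors and the transverse directions to the leaves into the first factor. A secondary subtlety is that the assumed Lagrangian AGC must be available in the generality where the ambient manifold $\wt M$ need not be compact (since the Conjecture allows $M_2$ non-compact, although $\wt N$ itself is closed because $M_1$ is closed and $L$ is closed); I would formulate both the Conjecture and its Lagrangian prototype in the same generality.
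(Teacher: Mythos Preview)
Your proposal is correct and follows essentially the same route as the paper: the same doubled manifold $\wt M=M_1\x M_2\x M_1$ with $\wt\om=\om_1\oplus\om_2\oplus(-\om_1)$, the same Lagrangian $\wt N=\iota_N(N)$, the same anti-symplectic involution $\wt\psi(x,y,z)=(z,\psi(y),x)$, and the same lift $\wt\phi=\phi\x\id$. The bijection and the non-degeneracy equivalence you flag as ``the hard part'' are exactly the content of Lemma~\ref{le:N}(\ref{le:N:phi leaf}) and~(\ref{le:N:non-deg}), which the paper simply cites; you need not redo the linear algebra by hand.
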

\subsection*{An application} 
By definition a presymplectic manifold is a pair $(M,\om)$, where $M$ is a manifold, and $\om$ is a closed two-form on $M$ of constant corank $\corank\om$ (see Section \ref{sec:back}). We say that a presymplectic manifold $(M',\om')$ embeds into a presymplectic manifold $(M,\om)$ iff there exists an embedding $\psi\Colon M'\to M$ such that $\psi^*\om=\om'$. The following question generalizes the symplectic and Lagrangian embedding problems.\\

\noindent {\bf Question B:} Given two presymplectic manifolds, does one of them embed into the other one?\\

Note that in the case $\dim M'+\corank\om'>\dim M+\corank\om$ there does not even exist any immersion $\psi\Colon M'\to M$ satisfying $\psi^*\om=\om'$. (This follows from Proposition \ref{prop:M om} below.) The next result is concerned with the ``critical case'' in which ``$>$'' is replaced by ``$=$'' above. It is a consequence of Theorem \ref{thm:leaf}. A presymplectic manifold $(M,\om)$ is called regular iff its isotropic leaf relation is a closed subset of $M\x M$ and a submanifold.
\begin{cor}\label{cor:presympl} Let $(M,\om)$ be a bounded and aspherical symplectic manifold, and $(M',\om')$ a closed, regular presymplectic manifold of corank $\dim M-\dim M'$. Assume that every compact subset of $M$ can be displaced in a Hamiltonian way, and that $M'$ has a simply-connected isotropic leaf. Then $(M',\om')$ does not embed into $(M,\om)$. 
\end{cor}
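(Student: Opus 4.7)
The plan is to argue by contradiction, producing from a presymplectic embedding a closed regular coisotropic submanifold $N\sub M$ to which Theorem \ref{thm:leaf} applies with constant $C=\infty$, and then using displaceability of compact sets to reach a contradiction.

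Suppose $\psi\Colon M'\to M$ is a smooth embedding with $\psi^*\om=\om'$, and set $N:=\psi(M')$. I would first check that $N$ is a closed, regular coisotropic submanifold with a simply-connected isotropic leaf. Closedness follows from compactness of $M'$. For coisotropy, note that at any $x\in N$ the kernel of $\om|_{T_xN}$ has dimension $\corank\om'=\dim M-\dim M'=\dim M-\dim N=\dim(T_xN)^\om$, so this kernel coincides with $(T_xN)^\om$, giving $(T_xN)^\om\sub T_xN$. Since $\psi$ intertwines the isotropic leaf relations of $M'$ and of $N$, regularity of $(M',\om')$ transfers to $N$, and the distinguished simply-connected leaf transfers as well.

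Next I would argue that $A(M,\om,N)=\infty$, so that Theorem \ref{thm:leaf} applies with $C=\infty$ (making condition (\ref{eq:d phi id M}) vacuous). Sphere contributions vanish because $(M,\om)$ is aspherical. For a disk $u\Colon \D\to M$ with $u(\d\D)$ in an isotropic leaf $L$ of $N$, regularity identifies $L$ as a fiber of the bundle $\pi_N\Colon N\to N_\om$; since fibers over a connected component of $N_\om$ are diffeomorphic and one such leaf is simply-connected, the loop $u|_{\d\D}$ bounds a disk $v\Colon \D\to L$. Since $L$ is isotropic, $\int v^*\om=0$, and gluing $u$ with $v$ along their common boundary produces a sphere of $\om$-area $\int u^*\om$, which is zero by asphericity. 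Hence no sphere or disk contributes positively, so $A(M,\om,N)=\infty$. By the displaceability assumption applied to the compact set $N$, there is $\phi\in\Ham(M,\om)$ with $\phi(N)\cap N=\empty$; any leafwise fixed point $x$ would satisfy $\phi(x)\in N_x\sub N\cap \phi(N)=\empty$, so $\Fix(\phi,N)=\empty$, and the non-degeneracy of $(N,\phi)$ is vacuously satisfied. Theorem \ref{thm:leaf} then forces $|\Fix(\phi,N)|\geq b_0(N,\Z_2)\geq 1$, contradicting $\Fix(\phi,N)=\empty$.

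The main obstacle is the verification that $A(M,\om,N)=\infty$: the minimal area nominally involves disks bounding in possibly many different isotropic leaves, and one has to combine regularity (to propagate simple-connectedness of one leaf to neighboring leaves through the fiber-bundle structure of $\pi_N$) with asphericity (to kill the sphere-cap ambiguity when comparing two disk fillings). The remaining steps, transferring the hypotheses via $\psi$ and extracting a contradiction from displaceability, are formal.
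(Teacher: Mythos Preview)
Your proposal is correct and follows essentially the same route as the paper: embed to get a closed regular coisotropic $N$, show $A(M,\om,N)=\infty$ by capping disks in the simply-connected leaf and invoking asphericity, then contradict Theorem \ref{thm:leaf} via a displacing Hamiltonian diffeomorphism. The only cosmetic point to tighten is that you should explicitly pass to the connected component of $N$ containing the simply-connected leaf (as the paper does), so that \emph{every} leaf meeting a disk boundary is simply-connected; the displacing $\phi$ still displaces this smaller $N$, so the contradiction goes through unchanged.
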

\vspace{0.5ex}
\subsubsection*{Examples} 
As an example, let $(X,\si)$ and $(X',\si')$ be symplectic manifolds, the former bounded and aspherical and the latter closed. Let $F$ be a closed simply-connected manifold. Assume that $\dim X+2=\dim X'+2\dim F$. Then the hypotheses of Corollary \ref{cor:presympl} are satisfied with 
\[M:=X\x\R^2,\quad \om:=\si\oplus\om_0,\quad M':=X'\x F,\quad\om':=\si'\oplus0.\] 

As a more specific example, let $(X',\si')$ be a closed aspherical symplectic manifold, and $k\geq2$ and $0\leq\ell\leq k$ be integers. We define
\[(M,\om):=\big(X'\x\R^{2(k-\ell)}\x\R^\ell,\si'\oplus\om_0\oplus0\big),\quad (M',\om'):=\big(X'\x S^k,\si'\oplus0\big).\]
Then $(M',\om')$ does not embed into $(M,\om)$. To see this, observe that every embedding of $(M',\om')$ into $(M,\om)$ gives rise to an embedding of $(M',\om')$ into $\big(X'\x\R^{2k},\si'\oplus\om_0\big)$, by composition with the canonical inclusion $M\to X'\x\R^{2k}$. Hence the statement follows from Corollary \ref{cor:presympl}.

However, in this example there exists an embedding $\psi\Colon M'\to M$ such that $\psi^*[\om]=[\om']$, provided that $\ell<k$. We may for example choose any embedding $\iota\Colon S^k\to \R^{2(k-\ell)}\x\R^\ell$ and define $\psi:=\id_{X'}\x\iota$. Furthermore, if $\ell=0$ then there exists an \emph{immersion} $\psi\Colon M'\to M$ satisfying $\psi^*\om=\om'$. To see this, note that the Whitney map
\[f\Colon S^k\sub \R\x\R^k\to \R^{2k}\iso\C^k,\quad f(a,x):=(1+ai)x\]
is a Lagrangian immersion. (See \cite{ACL}, Example I.4.3, p. 17.) The map $\psi:=\id_{X'}\x f$ has the desired properties. 
\subsection*{Further research} 
A further direction of research is to replace the closeness assumption (\ref{eq:d phi id M}) by a suitable monotonicity assumption. This requires a definition of a Maslov map of the triple $(M,\om,N)$. In a forthcoming article \cite{Zi1} I give such a definition. 
\vspace{0.5ex}
\subsection*{Related results}
In the extreme cases $N=M$ and $\dim N=\dim M/2$ Question A has been investigated a lot. For some references, see for example \cite{MS}, Sec. 9.1., p.~277, and \cite{Gin}, Sec.~1.1.\ p.112. If $(M,\om)$ is a closed symplectic manifold, and $\phi\in\Ham(M,\om)$ is such that every $x\in\Fix(\phi)$ is non-degenerate then Arnol'd \cite{Ar} conjectured that $|\Fix(\phi)|\geq|\Crit f|$ for every Morse function $f:M\to\R$. 

The general coisotropic case was first considered by J.~Moser. He proved that $\big|\Fix(\phi,N)\big|\geq2$ if $M$ is simply connected, $\om$ is exact, and the $C^1$-distance $d^{C^1}(\phi,\id)$ is sufficiently small, see the theorem on p. 19 in \cite{Mos}. (In fact, he showed that $\big|\Fix(\phi,N)\big|$ is bounded below by the Lusternik-Schnirelmann category of $N$, see Proposition 5, p.31 in \cite{Mos}.) A.~Banyaga \cite{Ba} removed the simply connectedness and exactness conditions. Because of the $C^1$-closeness condition these are local results. Global results were first obtained by I.~Ekeland and H.~Hofer \cite{EH,Ho}. For $N$ a closed connected hypersurface in $\R^{2n}$ of restricted contact type they gave several criteria under which $\Fix(\phi,N)\neq\emptyset$, allowing for interesting cases in which $d^{C^1}(\phi,\id)$ is big. For example, in Theorem 1.6 in \cite{Ho} it is assumed that the compactly supported Hofer distance $d_\c(\phi,\id)$ is bounded above by the Ekeland-Hofer capacity $c_{EH}(N)$. Recall here that a coisotropic submanifold $N\sub M$ of codimension $k$ is said to be of contact type iff there exist one-forms $\al_1,\ldots,\al_k$ on $N$ such that $d\al_i=\om$, for $i=1,\ldots,k$, and $\al_1\wedge\cdots\wedge \al_k\wedge\om|_N^{n-k}$ does not vanish anywhere on $N$. Here $\om|_N$ denotes the pullback of $\om$ under the inclusion of $N$ into $M$. $N$ is said to be of restricted contact type iff the $\al_i$'s extend to global primitives of $\om$. D. Dragnev (\cite{Dr}, Theorem 1.3) proved a similar result for general codimension of $N$, replacing $c_{EH}(N)$ by the Floer Hofer capacity of $N$, and assuming that $N$ is only of contact type. 

Generalizing in another direction, V.~Ginzburg proved a version of Hofer's result for subcritical Stein manifolds, replacing $c_{EH}$ by some homological capacity $c_\hom$ (see \cite{Gin}, Theorem 2.9 p. 122). This result in turn was recently extended by B.~G\"urel \cite{Gur} to the coisotropic case (with $c_\hom$ replaced by some constant depending on $N$). For general codimension of $N$, Ginzburg observed that $\Fix(\phi,N)\neq\emptyset$ if the isotropic foliation of $N$ is a fibration (i.e. $N$ is regular) and ``$\phi$ is not far from $\id$ in a suitable sense'', see \cite{Gin}, Example 1.3 p. 113. His argument is based on the fact that in this case the leaf relation is a Lagrangian submanifold of the product $M\x M$, equipped with the symplectic form $\om\oplus(-\om)$. Lately, P.~Albers and U.~Frauenfelder proved that $\Fix(\phi,N)\neq\emptyset$, if $(M,\om)$ is convex at infinity, $N\sub M$ is a closed hypersurface of restricted contact type, and $d_\c(\phi,\id)<A(M,\om,N)$. If in addition the Rabinowitz action functional of the Hamiltonian function generating $\phi$ is Morse, then they showed that $\big|\Fix(\phi,N)\big|\geq\sum_ib_i(N,\Z_2)$. (See Theorems A and B in \cite{AF}.) A problem related to Question A is to find a lower bound on the displacement energy of a coisotropic submanifold. Recent work on this problem other than the one already mentioned has been carried out by E.~Kerman in \cite{Ke}. 

Note that regularity of $N$ and the contact type condition do not imply each other. For example, every Lagrangian submanifold is regular. However, if $N$ is a closed connected Lagrangian submanifold of contact type then it is a torus, see for example \cite{Gin}, Example 2.2 (iv), p. 118. On the other hand, consider $(M,\om):=(\C^2,\om_0)$, fix an irrational number $a>0$, and define $H\Colon \C^2\to \R$ by $H(z,w):=|z|^2+|w|^2/a$. Then the ellipsoid $N:=H^{-1}(1)\sub M$ is a hypersurface of restricted contact type, since the region bounded by $N$ is convex. However, the only compact isotropic leaves are the circles $\big\{(z,0)\,\big|\,|z|^2=1\big\}$ and $\big\{(0,w)\,\big|\,|w|^2=a\big\}$. (The leaves are the integral curves of the Hamiltonian vector field of $H$.) Hence $N$ is not regular. Note that ``restricted contact type'' is a global condition on $(M,\om,N)$, whereas regularity is a condition only on $(N,\om|_N)$. 

If $N$ is of restricted contact type then it is stable (see Definition 2.1 p.~117 in \cite{Gin}). Regularity and stability can be seen as ``dual'' conditions in the following sense. Namely, $(N,\om|_N)$ is regular if and only if it fibers into isotropic submanifolds, whereas it is stable if and only if some neighborhood of $N$ fibers as a family of coisotropic submanifolds containing $N$, see \cite{Gin} Proposition 2.6, p. 120. Observe also that V. Ginzburg constructed a closed hypersurface $N\sub\R^{2n}$ without any closed characteristic, see \cite{Gin}, Example 7.2 p. 158. This means that $A(\R^{2n},\om_0,N)=\infty$. Furthermore, B. G\"urel gave an example of a hypersurface $N\sub\R^4$ such that $A(\R^4,\om_0,N)=\infty$, and for every $\eps>0$ there exists $\phi\in\Ham(M,\om)$ satisfying $\Fix(\phi,N)=\emptyset$ and $d_\c(\phi,\id)<\eps$ (see \cite{Gur}). This shows that one may not completely drop the regularity or stability condition on $N$ if one wants to prove existence of leafwise fixed points. 

Let now $M,\om,M'$ and $\om'$ be as in the hypothesis of Corollary \ref{cor:presympl}. Assume that $(M,\om)$ is the product of some bounded symplectic manifold with $(\R^2,\om_0)$ and that $M'$ is simply connected. Then the statement of the corollary follows from the comments after Example 2.2.8. on pp. 288 and 289 in \cite{ALP}, using Proposition \ref{prop:M om} and Lemma \ref{le:X/R} below. Like the proof of Corollary \ref{cor:presympl}, that argument is based on the fact that the image of the map $\iota_N$ defined in (\ref{eq:iota N wt N}) is a Lagrangian submanifold of $M\x N_\om$. However, since it does not involve the Key Lemma \ref{le:A M om N}, the assumption that $M'$ is simply connected is needed there. On the other hand, if $\om$ is exact then Corollary \ref{cor:presympl} can be deduced from Example 1.7, p.115 in \cite{Gin}, using again Proposition \ref{prop:M om} and Lemma \ref{le:X/R}. Furthermore, if the presymplectic manifold $(M',\om')$ is stable then a similar non-embedding result can be deduced from Theorem 2.7 (ii) p. 121 in \cite{Gin}.

\subsection*{Organization of the article}
Section \ref{sec:back} contains some background on foliations, presymplectic manifolds, coisotropic submanifolds, leafwise fixed points, and Hamiltonian diffeomorphisms. In this section, the linear holonomy of a foliation along a path in a leaf, and based on this, non-degeneracy of a pair $(N,\phi)$, are defined. In Section \ref{sec:reduction} Chekanov's theorem is restated (Theorem \ref{thm:L}), and the relevant properties of the map $\iota_N$ and the subset $\wt N\sub\wt M$ (as in (\ref{eq:iota N wt N})) are established (Lemmas \ref{le:N} and \ref{le:A M om N}). Based on this, the main results are proven in Section \ref{sec:proofs}. Appendix \ref{subsec:sympl geo} contains some background about presymplectic geometry, on the embedding of a smooth fiber bundle over a symplectic base into its vertical cotangent bundle, and three other elementary results from symplectic geometry. In Appendix \ref{subsec:fol} the result is proven that is used in the definition of linear holonomy. Furthermore, an estimate for a tangent path of a horizontal distribution in a foliation is proven. Finally, Appendix \ref{subsec:further} contains results about smooth structures on the quotient set of an equivalence relation, fast almost periodic orbits of a vector field, and a measure theoretic lemma. 

\subsection*{Acknowledgments}
I am specially indebted to Yael Karshon for numerous enlightening discussions and her continuous support and encouragement. She also made me aware that the conclusion of Corollary \ref{cor:presympl} follows from an easy cohomological argument if $(M,\om):=(\R^{2n},\om_0)$ and $(M',\om'):=\big(X'\x F,\si'\oplus0\big)$, with $X'$ closed, $\dim X'>0$ and $\si'$ symplectic. I would also like to thank Viktor Ginzburg, Chris Woodward and Masrour Zoghi for interesting conversations, and Urs Frauenfelder for useful hints. It was Chris Woodward from whom I learned about the construction of the Lagrangian submanifold $\wt N\sub\wt M$ in the case of a Hamiltonian Lie group action, with $N:=\mu^{-1}(0)$. 

\section{Background}\label{sec:back}
\subsection*{Notation, manifolds}
We denote by $\N$ the positive integers, by $\D,S^1\sub\R^2$ the closed unit disk and the unit circle, and for $r>0$ by $B_r\sub\R^2$ the open ball of radius $r$. For two vector spaces $V$ and $V'$ and a linear map $\Psi\Colon V'\to V$ we denote by $\ker\Psi$ and $\im\Psi$ its kernel and image, and by $\Psi^*\Colon V^*\to{V'}^*$ its adjoint map. Let $M$ be a set. By a \emph{smooth structure} on $M$ we mean a maximal smooth ($C^\infty$) atlas $\A$ of charts $\phi\Colon U\sub M\to \R^n$. (Hence $M$ does not have any boundary.) Assume that $M$ is equipped with a smooth structure. We denote by $C^\infty(M,\R)$ and $C^\infty_\c(M,\R)$ the set of smooth and compactly supported smooth functions, respectively. We call $(M,\A)$ a \emph{manifold} iff the topology on $M$ induced by $\A$ is Hausdorff and second countable. \emph{Submanifolds} of $M$ are by definition embedded. For a smooth time-dependent vector field $X$ on a manifold $M$ and $t\in\R$ we denote by $\phi_X^t:M\to M$ its time-$t$-flow (if it exists).
\subsection*{Foliations, regularity, and linear holonomy}
We recollect some basic definitions and facts about foliations. For more details, see for example the book \cite{MM}. The definition of linear holonomy given below will be needed to define non-degeneracy of a pair $(N,\phi)$ as in section \ref{sec:main}. I am not aware of a reference in which the linear holonomy is defined in precisely this way. However, the basic idea of its definition is standard, see for example \cite{MM}.

Let $M$ be a set, $0\leq k\leq n$ integers, $U,U'\sub M$ subsets, and $\phi\Colon U\to\R^n$, $\phi'\Colon U'\to \R^n$ injective maps. We denote by $\pi_1\Colon \R^n=\R^{n-k}\x\R^k\to\R^{n-k}$ the canonical projection onto the first factor. We call $(U,\phi)$ and $(U',\phi')$ $(n,k)$-compatible iff $\phi(U\cap U')\sub\R^n$ is open, $\phi'\circ\phi^{-1}\Colon \phi(U\cap U')\to\phi'(U\cap U')$ is a diffeomorphism, and for every $\xi\in\R^{n-k}$ the map 
\[\pi_1\circ \phi'\circ\phi^{-1}(\xi,\cdot)\Colon \big\{\eta\in\R^k\,\big|\,(\xi,\eta)\in \phi(U\cap U')\big\}\to \R^{n-k}\]
is locally constant. We define an $(n,k)$-atlas on $M$ to be a set $\A$ of pairs $(U,\phi)$ as above, such that $\bigcup_{(U,\phi)\in\A}U=M$ and each two pairs in $\A$ are $(n,k)$-compatible. An $(n,k)$-foliation on $M$ is defined to be a maximal (with respect to inclusion) $(n,k)$-atlas on $M$. Let $\F$ be an $(n,k)$-foliation on $M$. We endow $M$ with the smooth structure induced by $\F$, and for $x\in M$, we define 
\[T_x\F:=d\phi(x)^{-1}\big(\{0\}\x\R^k\big)\sub T_xM,
\]
where $(U,\phi)\in\F$ is a chart such that $x\in U$. We define the leaf through a point $x_0\in M$ to be the set
\[\F_{x_0}:=\big\{x(1)\,\big|\,x\in C^\infty([0,1],M):\,x(0)=x_0,\,\dot x(t)\in T_{x(t)}\F,\,\forall t\big\}\sub M.\]
The leaf relation is defined to be the set  
\[R^\F:=\big\{(x_0,x_1)\in M\x M\,\big|\,x_1\in\F_{x_0}\big\}.\] 
It is an equivalence relation on $M$. The collection of the subspaces $T_x\F$, with $x\in M$, is an involutive distribution $T\F$ on $M$, called the tangent bundle to $\F$. We denote by $N\F:=TM/T\F$ the normal bundle, and by $\pr^\F\Colon TM\to N\F$ the canonical projection, and for $x\in M$, we write $N_x\F:=(N\F)_x$. 

Let now $(M,\A)$ be a manifold. By a foliation on $(M,\A)$ we mean a foliation $\F$ on $M$ that induces $\A$. In this case we call the pair $(M,\F)$ a foliated manifold. Note that if $E$ is an involutive distribution on $M$ then by Frobenius' Theorem there exists a unique foliation $\F^E$ on $M$ such that $E_x=d\phi(x)^{-1}\big(\{0\}\x\R^k\big)$ for every chart $(U,\phi)\in\F^E$ for which $x\in U$. We call a foliated manifold $(M,\F)$ \emph{regular} iff $R^\F$ is a closed subset and submanifold of $M\x M$. By Lemma \ref{le:X/R} below this holds if and only if there exists a manifold structure on the quotient $M/R^\F$ such that the canonical projection from $M$ to $M/R^\F$ is a submersion. Furthermore, such a structure is unique. If $\F$ is regular then the leaves are closed subsets and submanifolds of $M$. If $(M,\F)$ is a foliated manifold and $H\sub TM$ a distribution then we call $H$ \emph{($\F$-)horizontal} iff for every $x\in M$ we have $T_xM=H_x\oplus T_x\F$. 

Let $(M,\F)$ be a foliated manifold, $F$ a leaf of $\F$, $a\leq b$, and $x\in C^\infty([a,b],F)$ a path. The \emph{linear holonomy of $\F$ along $x$} is a linear map $\hol^\F_x:N_{x(a)}\F\to N_{x(b)}\F$. Its definition is based on the following result. 
\begin{prop}\label{prop:hol} Let $M,\F,F,a,b$ and $x$ be as above, $N$ a manifold, and $y_0\in N$. Then the following statements hold.
\begin{enua}\item\label{prop:hol:u} For every linear map $T\Colon T_{y_0}N\to T_{x(a)}M$ there exists a map $u\in C^\infty([a,b]\x N,M)$ such that
\begin{eqnarray}\label{eq:u 0 x}&u(\cdot,y_0)=x,\quad \F_{u(t,y)}=\F_{u(a,y)},\,\forall t\in[a,b],\,y\in N,&\\
\label{eq:d u a y}&d(u(a,\cdot))(y_0)=T.& 
\end{eqnarray}
\item\label{prop:hol:u u'} Let $u,u'\in C^\infty([a,b]\x N,M)$ be maps satisfying (\ref{eq:u 0 x}), such that
\begin{equation}\label{eq:u u' a}\pr^\F d(u(a,\cdot))(y_0)=\pr^\F d(u'(a,\cdot))(y_0).\end{equation}
Then $\pr^\F d(u(b,\cdot))(y_0)=\pr^\F d(u'(b,\cdot))(y_0)$. 
\end{enua}
\end{prop}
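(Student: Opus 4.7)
\emph{Plan.} The plan is to work locally, in foliation charts adapted to the path $x$. By compactness of $[a,b]$ and the continuity of $x$, there exist a partition $a=t_0<t_1<\cdots<t_k=b$ and foliation charts $(U_i,\phi_i=(\xi_i,\eta_i))\in\F$ with $\xi_i\in\R^{n-k}$ transverse to the leaves, $\eta_i\in\R^k$ leafwise, and $x([t_{i-1},t_i])\sub U_i$. Throughout, I shall freely shrink $N$ to a small open neighborhood of $y_0$ so that at every step of the induction $u(t_{i-1},y)\in U_i$ for all $y\in N$; this suffices because all hypotheses and conclusions only constrain $u$ at $y=y_0$.

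\emph{Part (a).} First pick any $v\in C^\infty(N,M)$ with $v(y_0)=x(a)$ and $dv(y_0)=T$ (possible after shrinking $N$ to a coordinate ball around $y_0$), and set $u(a,y):=v(y)$. Assuming $u(t_{i-1},\cdot)$ has been constructed with image in $U_i$, define $u$ on $[t_{i-1},t_i]\x N$ by
\[\phi_i(u(t,y)):=\bigl(\xi_i(u(t_{i-1},y)),\ \eta_i(u(t_{i-1},y))+\eta_i(x(t))-\eta_i(x(t_{i-1}))\bigr).\]
For each fixed $y$, the path $u(\cdot,y)|_{[t_{i-1},t_i]}$ has constant transverse coordinate $\xi_i$, so it lies on a single plaque of $U_i$, which is contained in the leaf through $u(t_{i-1},y)$; this yields the leaf condition in (\ref{eq:u 0 x}) inductively. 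For $y=y_0$ the formula reduces to $\phi_i(u(t,y_0))=(0,\eta_i(x(t)))$, so $u(\cdot,y_0)=x$. Condition (\ref{eq:d u a y}) is immediate from $u(a,\cdot)=v$. Smoothness of $u$ across the gluing times $t_i$ follows from the $(n,k)$-compatibility of $\phi_i$ and $\phi_{i+1}$: in $U_i\cap U_{i+1}$, the transition preserves the splitting into transverse and leafwise coordinates (up to a transverse-constant reparametrization of the $\eta$-variable), so the two inductive definitions glue to a smooth map.

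\emph{Part (b) and main obstacle.} Let $u,u'$ satisfy (\ref{eq:u 0 x}) and (\ref{eq:u u' a}). In each chart $U_i$, the leaf condition forces $u(\cdot,y)|_{[t_{i-1},t_i]}$ to lie in a single plaque of $U_i$ (after possibly shrinking $N$ further, by continuity), and hence $\xi_i(u(t,y))=\xi_i(u(t_{i-1},y))$ for all such $t,y$. Differentiating in $y$ at $y_0$, using $u(t,y_0)=x(t)$ and $\ker d\xi_i(p)=T_p\F$, yields
\[\Psi_i(t)\circ\pr^\F d(u(t,\cdot))(y_0)=\Psi_i(t_{i-1})\circ\pr^\F d(u(t_{i-1},\cdot))(y_0),\]
where $\Psi_i(t)\Colon N_{x(t)}\F\to\R^{n-k}$ is the linear isomorphism induced by $d\xi_i(x(t))$; the identical relation holds for $u'$. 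Hence $\pr^\F d(u(t_i,\cdot))(y_0)$ equals the chart-intrinsic map $\Psi_i(t_i)^{-1}\Psi_i(t_{i-1})$ applied to $\pr^\F d(u(t_{i-1},\cdot))(y_0)$, and this map depends only on the foliation, not on $u$. Iterating $i=1,\ldots,k$ and invoking the base case (\ref{eq:u u' a}) gives $\pr^\F d(u(b,\cdot))(y_0)=\pr^\F d(u'(b,\cdot))(y_0)$. The main technical point is to verify that $\xi_i(u(\cdot,y))$ is genuinely constant on $[t_{i-1},t_i]$; this rests on the $(n,k)$-compatibility of foliation charts, which ensures that each connected component of a leaf inside $U_i$ is an entire plaque, so that a continuous leafwise path cannot jump transverse coordinates.
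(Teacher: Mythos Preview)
Your argument for part~(b) is essentially correct and parallels the paper's: both reduce to the observation that in a foliation chart $\phi=(\xi,\eta)$ containing a segment of $x$, the transverse coordinate $\xi\circ u(\cdot,y)$ is constant along any map satisfying the leaf condition, so $d\xi(x(t))\circ d(u(t,\cdot))(y_0)$ is independent of $t$ there. The paper packages this as an open--closed argument on the set $S=\{t:\pr^\F d(u(t,\cdot))(y_0)=\pr^\F d(u'(t,\cdot))(y_0)\}$ and isolates the constancy of $\xi$ on leaf--paths as a separate lemma (using that $\phi^\xi(F\cap U)$ is at most countable); you iterate over a finite chain of charts. The content is the same.

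Part~(a), however, has a genuine gap: your piecewise construction is not $C^1$ in $(t,y)$ at the junction times $t_i$ for $y\neq y_0$. The $(n,k)$-compatibility only says that the transition $\phi_{i+1}\circ\phi_i^{-1}$ has the form $(\xi,\eta)\mapsto (f(\xi),g(\xi,\eta))$; the leafwise reparametrization $g$ genuinely depends on $\xi$, so your parenthetical ``transverse-constant reparametrization of the $\eta$-variable'' is false. Concretely, your formula gives $\partial_t u(t_i^-,y)$ with $\phi_{i+1}$-coordinates $\bigl(0,\partial_\eta g(\phi_i(u(t_i,y)))\,\dot\eta_i(x(t_i))\bigr)$, while the $\phi_{i+1}$-side gives $\bigl(0,\dot\eta_{i+1}(x(t_i))\bigr)=\bigl(0,\partial_\eta g(\phi_i(x(t_i)))\,\dot\eta_i(x(t_i))\bigr)$. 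These differ whenever $\partial_\eta g$ varies in $\xi$ and $u(t_i,y)\neq x(t_i)$. (The shrinking of $N$ is a lesser issue and is fixable by precomposing with a smooth map $N\to V$ equal to the identity near $y_0$; note though that your stated justification is wrong, since the leaf condition in~(\ref{eq:u 0 x}) is required for \emph{all} $y\in N$.)

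The paper sidesteps the gluing problem entirely by a flow argument: it extends $t\mapsto\dot x(t)$ to a compactly supported smooth time-dependent section $s$ of $\pi_2^*T\F$ on $[a,b]\times M$ (via foliation charts and a partition of unity), picks any smooth $f\colon N\to M$ with $f(y_0)=x(a)$ and $df(y_0)=T$, and defines $u$ as the solution of $\partial_t u=s(t,u)$ with $u(a,\cdot)=f$. Tangency of $s$ to $T\F$ forces each $u(\cdot,y)$ to stay in a single leaf, and joint smoothness in $(t,y)$ on all of $[a,b]\times N$ comes for free from ODE dependence on initial conditions.
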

We choose a linear map $T:N_{x(a)}\F\to T_{x(a)}M$, such that $\pr^\F T=\id_{N_{x(a)}\F}$. Furthermore, we define $N:=N_{x(a)}\F$ and $y_0:=0$, and we choose a map $u\in C^\infty\big([a,b]\x N_{x(a)}\F,M\big)$ such that (\ref{eq:u 0 x}) and (\ref{eq:d u a y}) hold. Here in (\ref{eq:d u a y}) we canonically identified $T_0\big(N_{x(a)}\F\big)=N_{x(a)}\F$. We define
\begin{equation}\label{eq:hol F x}\hol^\F_x:=\pr^\F d(u(b,\cdot))(0)\Colon N_{x(a)}\F(=T_0(N_{x(a)}\F))\to N_{x(b)}\F.\end{equation}
It follows from Proposition \ref{prop:hol} that this map is well-defined. It can be viewed as the linearization of the holonomy of a foliation as defined for example in Sec. 2.1 in the book \cite{MM}. 

\subsection*{Presymplectic manifolds and symplectic quotients}
By a presymplectic vector space we mean a real vector space $V$ together with a skew-symmetric bilinear form $\om$. Let $(V,\om)$ be such a pair. For every linear subspace $W\sub V$ we denote by $W^\om:=\big\{v\in V\,\big|\,\om(v,w)=0,\,\forall w\in W\big\}$ its $\om$-complement. The subspace $W$ is called coisotropic iff $W^\om\sub W$. We define $\corank\om:=\dim V^\om$. A \label{presympl} presymplectic structure on a manifold $M$ is a closed two-form $\om$ on $M$, such that $\corank\om_x$ does not depend on $x\in M$. This number is called the corank of $\om$. Let $(M,\om)$ be a presymplectic manifold. The \emph{isotropic distribution} $TM^\om=\big\{(x,v)\,\big|\,x\in M,\,v\in T_xM^\om\big\}\sub TM$ is involutive. (This follows for example as in the proof of Lemma 5.33 in the book \cite{MS}.) We call $\F^{TM^\om}$ the isotropic (or characteristic) foliation on $M$. We call $(M,\om)$ \emph{regular} \label{M om reg} iff $\F^{TM^\om}$ is regular. Assume that $(M,\om)$ is regular. We denote by $M_\om$ the set of isotropic leaves of $M$, by $\pi_{M,\om}:M\to M_\om$ the canonical projection, and by $\A_{M,\om}$ the unique manifold structure on $M_\om$ such that $\pi_{M,\om}$ is a submersion. There exists a unique symplectic form $\om_M$ on $M_\om$ satisfying $\pi_{M,\om}^*\om_M=\om$. The triple $\big(M_\om,\A_{M,\om},\om_M\big)$ is called the \emph{symplectic quotient} of $(M,\om)$. If $M$ is also closed then by a result by C.~Ehresmann the quadruple $\big(M,M_\om,\A_{M,\om},\pi_{M,\om}\big)$ is a smooth (locally trivial) fiber bundle, see the proposition on p. 31 in \cite{Eh}. 

On the other hand, given a smooth fiber bundle $(E,B,\pi)$ and a symplectic form $\si$ on $B$, the pair $(E,\pi^*\si)$ is a regular presymplectic manifold. Let $(M,\om)$ be a presymplectic manifold, and $H\sub TM$ a distribution. We call $H$ \emph{($\om$-)horizontal} iff it is $\F^{TM^\om}$-horizontal. This means that for every $x\in M$ we have $T_xM=H_x\oplus T_xM^\om$. Assume that $H$ is horizontal. This gives rise to a closed two-form $\Om_{\om,H}$ on the manifold $(TM^\om)^*$, as follows. For $x\in M$ we denote by $\pr^H_x:T_xM\to T_xM^\om$ the linear projection along the subspace $H_x\sub T_xM$, and we define 
\begin{equation}\label{eq:iota H TM}\iota_H:(TM^\om)^*\to T^*M,\quad \iota_H(x,\al):=\big(x,\al\circ\pr^H_x\big).\end{equation}
We denote by $\pi:(TM^\om)^*\to M$ the canonical projection, and by $\om_\can$ the canonical symplectic form on $T^*M$. We define 
\begin{equation}\label{eq:Om om H}\Om_{\om,H}:=\pi^*\om+\iota_H^*\om_\can.
\end{equation}
By a result by C.-M.~Marle there exists an open neighborhood of the zero section on which $\Om_{\om,H}$ is non-degenerate, see Proposition 3.2 in \cite{Ma}.
\vspace{0.5ex}
\subsection*{Coisotropic submanifolds and leafwise fixed points}
Let $(M,\om)$ be a presymplectic manifold. A submanifold $N\sub M$ is called coisotropic iff for every $x\in N$ the subspace $T_xN\sub T_xM$ is coisotropic. This holds if and only if the restriction $\om|_N$ of $\om$ to $N$ (i.e. the pull-back under the inclusion map) is a presymplectic form satisfying $\dim N+\corank\om|_N=\dim M+\corank\om$. (This follows from Proposition \ref{prop:M om} below.) If $N$ is coisotropic then $2\dim N\geq\dim M+\corank\om$. In the extreme case $\corank\om=0$ (i.e. $\om$ symplectic) and $\dim N=\dim M/2$ the submanifold $N$ is called Lagrangian. As an example, let $F$ be a manifold, and $(X,\si)$ a symplectic manifold. We denote by $\om_\can$ the canonical two-form on $T^*F$, and define $\om:=\si\oplus\om_\can$. Then $X\x F$ is a coisotropic submanifold of $(X\x T^*F,\om)$. As another example, every hypersurface (i.e. real codimension one submanifold) of a symplectic manifold is coisotropic. 

Let $N\sub M$ be a coisotropic submanifold. For a point $x\in N$ we denote by $N_x:=N^\om_x\sub N$ the isotropic leaf through $x$. Furthermore, we denote by $N_\om$ the set of all isotropic leaves of $N$, and by $\pi_N:N\to N_\om$ the canonical projection. We define the \emph{action spectrum} and the \emph{minimal area} of $(M,\om,N)$ as
\begin{equation}\label{eq:S}S(M,\om,N):=\left\{\int_\D u^*\om\,\bigg|\,u\in C^\infty(\D,M):\,\exists F\in N_\om:\, u(S^1)\sub F\right\},\end{equation}
\begin{equation}\label{eq:A M om N} A(M,\om,N):=\inf\big(S(M,\om,N)\cap (0,\infty)\big)\in[0,\infty].
\end{equation}
Furthermore, we denote the linear holonomy $\hol^{\F^{TN^\om}}$ by $\hol^{\om,N}$. We call $N$ \label{N reg} \emph{regular} iff the presymplectic manifold $(N,\om|_N)$ is regular (i.e.  the foliation $\F^{TN^\om}$ on $N$ is regular). Note that such an $N$ is sometimes called ``fibering''. 

Let $\phi\Colon M\to M$ be a map. We say that a point $x\in N$ is leafwise fixed under $\phi$ iff $\phi(x)\in N_x$. We denote by $\Fix(\phi,N)=\Fix(\phi,N,\om)$ the set of such points. Assume now that $\corank\om=0$, i.e. that $\om$ is symplectic, and that $\phi$ is smooth. We call $(N,\phi,\om)$ (or simply $(N,\phi)$) \label{non-deg} \emph{non-degenerate} iff the following holds. For $x_0\in N$ we denote by $\pr_{x_0}:T_{x_0}N\to T_{x_0}N/(T_{x_0}N)^\om$ the canonical projection. Let $F\sub N$ be an isotropic leaf, and $x\in C^\infty([0,1],F)$ a path. Assume that $\phi(x(0))=x(1)$, and let $v\in T_{x(0)}N\cap T_{x(0)}\phi^{-1}(N)$ be a vector. Then $v\neq0$ implies that 
\begin{equation}\label{eq:hol x pr}\hol^{\om,N}_x\pr_{x(0)}v\neq \pr_{x(1)}d\phi(x(0))v.\end{equation}
Note that in the case $N=M$ this condition means that for every fixed point $x_0$ of $\phi$ the differential $d\phi(x_0)$ does not have 1 as an eigenvalue. Furthermore, in the case that $N$ is Lagrangian the condition means that every connected component $N'\sub N$ intersects $\phi(N')$ transversely.

\subsection*{Hamiltonian diffeomorphisms and Hofer distance}
Let $M$ be a manifold, and $H\in C^\infty\big([0,1]\x M,\R\big)$. We abbreviate $H_t:=H(t,\cdot)$. The Hofer semi-norm of $H$ is defined to be
\begin{equation}\label{eq:Vert H}\Vert H\Vert:=\Vert H\Vert_M:=\int_0^1\big(\sup_MH_t-\inf_MH_t\big)\in[0,\infty].\end{equation}
It follows from Lemma \ref{le:f} in the appendix that the function $[0,1]\ni t\mapsto \sup_MH_t-\inf_MH_t\in[0,\infty]$ is Borel measurable. Therefore, the integral (\ref{eq:Vert H}) is well-defined. Let now $(M,\om)$ be a symplectic manifold. For a function $H\in C^\infty(M,\R)$ we define the vector field $X_H$ generated by $H$ via the formula $dH=\om(X_H,\cdot)$. For $t\in\R$ we denote by $\phi_H^t$ the time-$t$-flow of the family $(X_{H_s})_{s\in\R}$ (if it exists). A diffeomorphism $\phi\Colon M\to M$ is called Hamiltonian iff there exists a function $H\in C^\infty\big([0,1]\x M,\R\big)$ such that $\phi_H^1$ exists and equals $\phi$. We denote by \label{Ham M om} $\Ham(M,\om)$ the group of all Hamiltonian diffeomorphisms. For $\phi,\psi\in\Ham(M,\om)$ we define the Hofer distance $d^{M,\om}(\phi,\psi)=d(\phi,\psi)$ to be
\begin{equation}\label{eq:d phi psi}d(\phi,\psi):=\inf\big\{\Vert H\Vert\,\big|\,H\in C^\infty\big([0,1]\x M,\R\big):\,\phi_H^1=\psi^{-1}\circ\phi\big\}\in[0,\infty].\end{equation}
We denote by $\Ham_\c(M,\om)\sub \Ham(M,\om)$ the subgroup of all diffeomorphisms that are generated by some compactly supported function $H:[0,1]\x M\to \R$. We define the \emph{compactly supported Hofer distance} of $\phi,\psi\in\Ham_\c(M,\om)$ to be
\begin{equation}\label{eq:d c}d_\c(\phi,\psi):=\inf\big\{\Vert H\Vert\,\big|\,H\in C^\infty_\c\big([0,1]\x M,\R\big):\,\psi^{-1}\circ\phi=\phi_H^1\big\}.\end{equation}

\subsection*{Geometric boundedness}
Let $(M,\om)$ be a symplectic manifold, and $J$ an $\om$-compatible almost complex structure on $M$. We call $(M,\om,J)$ (geometrically) bounded iff the Riemannian metric $g_{\om,J}:=\om(\cdot,J\cdot)$ is complete with bounded sectional curvature and injectivity radius bounded away from 0. We call $(M,\om)$ \label{bdd} (geometrically) bounded iff there exists an almost complex structure $J$ such that $(M,\om,J)$ is bounded. Examples are closed symplectic manifolds, cotangent bundles of closed manifolds, and symplectic vector spaces. For an almost complex manifold $(M,J)$ and a totally real submanifold $N\sub M$ we define 
\begin{eqnarray}\label{eq:A S 2}&A_{S^2}(M,J):=\inf\left\{\int_{S^2}u^*\om\,\big|\,u\Colon S^2\to M\,J\textrm{-holomorphic}\right\},&\\
\nn &A_\D(M,N,J):=\inf\left\{\int_\D u^*\om\,\big|\,u\Colon \D\to M\,J\textrm{-holomorphic, }u(S^1)\sub N\right\}.&
\end{eqnarray}
Let $(M,\om)$ be a bounded symplectic manifold, and $L\sub M$ a Lagrangian submanifold. We define
\begin{equation}\label{eq:A b}A_\b(M,\om,L):=\sup\big\{\min\big\{A_{S^2}(M,J),A_\D(M,L,J)\big\}\big\},\end{equation}
where the supremum is taken over all $\om$-compatible almost complex structures $J$ on $M$ such that $(M,\om,J)$ is bounded.
\section{Reduction to the Lagrangian case}\label{sec:reduction}
The proof of Theorem \ref{thm:leaf} is based on the following result, which is a reformulation of the Main Theorem in \cite{Ch}. Recall the definition (\ref{eq:A M om N}) of $A(M,\om,N)$. 
\begin{thm}\label{thm:L} Let $(M,\om)$ be a bounded symplectic manifold and $L\sub M$ a closed Lagrangian submanifold. Then there exists a constant $C\in(0,\infty]$ such that $C\geq A(M,\om,L)$ and the following holds. If $\phi\in\Ham(M,\om)$ satisfies 
\begin{equation}\label{eq:d phi id L}d(\phi,\id)<C, 
\end{equation} 
and $\phi(L)\pitchfork L$ (i.e.  $\phi(L)$ and $L$ intersect transversely), then
\begin{equation}\label{eq:L phi L}\#(L\cap\phi(L))\geq\sum_i b_i(L,\Z_2).
\end{equation}
\end{thm}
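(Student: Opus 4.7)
The plan is to take $C := A_\b(M,\om,L)$, with $A_\b$ defined in (\ref{eq:A b}), and verify the two properties the theorem requires of $C$.

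To see that $C \geq A(M,\om,L)$, I would fix any $\om$-compatible almost complex structure $J$ such that $(M,\om,J)$ is bounded, and observe that for every $J$-holomorphic disk $u\Colon \D\to M$ with $u(S^1)\sub L$ the energy identity gives $\int_\D u^*\om \geq 0$, with equality iff $u$ is constant. Hence every positive value contributing to the infimum defining $A_\D(M,L,J)$ lies in $S(M,\om,L)\cap(0,\infty)$, which yields $A_\D(M,L,J)\geq A(M,\om,L)$. Taking the minimum with $A_{S^2}(M,J)$ (which only makes the left-hand side smaller or equal) and then the supremum over admissible $J$ preserves the inequality, giving $C = A_\b(M,\om,L)\geq A(M,\om,L)$.

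For the main assertion, suppose $\phi\in\Ham(M,\om)$ satisfies $\phi(L)\pitchfork L$ and $d(\phi,\id) < C$. By definition of $C$ as a supremum, I can choose an $\om$-compatible almost complex structure $J$ with $(M,\om,J)$ bounded and
\[ d(\phi,\id) < \min\bigl\{A_{S^2}(M,J),\, A_\D(M,L,J)\bigr\}. \]
With this choice the hypotheses of the Main Theorem in \cite{Ch} are met verbatim, and that result supplies the desired lower bound $\#(L\cap\phi(L))\geq\sum_i b_i(L,\Z_2)$.

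The only obstacle is bookkeeping: translating between the formulation in \cite{Ch} and the one above, checking in particular that the constants $A_{S^2}(M,J)$, $A_\D(M,L,J)$ and the notion of geometric boundedness in \cite{Ch} agree with those defined via $g_{\om,J}=\om(\cdot,J\cdot)$ and (\ref{eq:A S 2}) here. No additional Floer-theoretic input is needed, so the statement is indeed a reformulation as advertised.
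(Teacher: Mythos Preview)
Your choice $C:=A_\b(M,\om,L)$ matches the paper's, but there are two genuine gaps.

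\textbf{First gap: the sphere contribution.} Your argument for $C\geq A(M,\om,L)$ is incomplete. You show $A_\D(M,L,J)\geq A(M,\om,L)$ and then write that taking the minimum with $A_{S^2}(M,J)$ ``only makes the left-hand side smaller or equal.'' But that is exactly the problem: if $A_{S^2}(M,J)<A(M,\om,L)$, the minimum drops below $A(M,\om,L)$ and your inequality fails. You must also prove $A_{S^2}(M,J)\geq A(M,\om,L)$. The paper does this by taking a $J$-holomorphic sphere $u$, homotoping it to a map $v$ with $v(\infty)\in L$ (using connectedness of $M$), and then reparametrizing to obtain a disk with boundary on $L$ and the same symplectic area. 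This shows every sphere area lies in $S(M,\om,L)$.

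\textbf{Second gap: compact support.} Chekanov's Main Theorem (restated in the paper as Theorem~\ref{thm:Ch}) is formulated for $\phi\in\Ham_\c(M,\om)$ and the \emph{compactly supported} Hofer distance $d_\c$. Your $\phi$ lies only in $\Ham(M,\om)$ and you only control $d(\phi,\id)$, so the hypotheses are not ``met verbatim.'' The paper bridges this via Lemma~\ref{le:H}: given $d(\phi,\id)<C$, one finds a compactly supported Hamiltonian $H$ with $\phi_H^1=\phi$ on $L$ and $\Vert H\Vert<C$, so that $\phi_H^1(L)=\phi(L)$, $\phi_H^1(L)\pitchfork L$, and $d_\c(\phi_H^1,\id)<C$. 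Chekanov's theorem then applies to $\phi_H^1$, and since $\phi_H^1(L)=\phi(L)$ the conclusion transfers to $\phi$. This cutoff step is not mere bookkeeping; without it the appeal to \cite{Ch} is unjustified.
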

For the convenience of the reader, let us recall Chekanov's theorem:
\begin{thm}[\cite{Ch}]\label{thm:Ch}Let $(M,\om)$ be a bounded symplectic manifold, $L\sub M$ a closed Lagrangian submanifold, and $\phi\in\Ham_\c(M,\om)$. If $d_\c(\phi,\id)<A_\b(M,\om,L)$ (defined as in (\ref{eq:A b})) and $\phi(L)\pitchfork L$ then (\ref{eq:L phi L}) holds.
\end{thm}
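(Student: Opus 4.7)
The plan is to prove this by constructing a version of Lagrangian Floer homology for the pair $(L,\phi(L))$ and showing that (i) its rank equals $\sum_i b_i(L,\Z_2)$, and (ii) the rank is bounded above by $\#(L\cap\phi(L))$, which under the transversality assumption is the number of chain generators. The generators are the intersection points of $L$ and $\phi(L)$, equivalently the Hamiltonian chords from $L$ to $L$ for a Hamiltonian $H$ generating $\phi$. The boundary operator counts isolated finite-energy Floer strips $u\Colon\R\x[0,1]\to M$ with $u(\cdot,0),u(\cdot,1)\sub L$, for a generic $\om$-compatible almost complex structure $J$ making $(M,\om,J)$ bounded.

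First I would pick a Hamiltonian $H\in C^\infty_\c([0,1]\x M,\R)$ generating $\phi$ and, by reparametrization and a small perturbation, arrange $\Vert H\Vert<A_\b(M,\om,L)$, so that the energy of every Floer strip counted by the boundary operator is strictly less than $A_\b(M,\om,L)$ (this is the standard a priori bound coming from the action functional). Next I would set up transversality for the moduli spaces of Floer strips using a generic path $(J_t)$ of $\om$-compatible almost complex structures subordinate to a fixed ``bounded'' reference one; the boundedness hypothesis on $(M,\om,J)$ supplies the uniform geometric control (Gromov's monotonicity, isoperimetric inequality) needed to replace the usual compactness of $M$ in Floer theory. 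The $\Z_2$-grading and the coefficient ring $\Z_2$ make it unnecessary to worry about orientations or the Maslov index.

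The crucial step is Gromov-type compactness of the moduli spaces: a sequence of Floer strips with energy uniformly bounded by some $E<A_\b(M,\om,L)$ either converges (modulo breaking) or develops a bubble, which is either a $J$-holomorphic sphere in $M$ or a $J$-holomorphic disk with boundary on $L$. By definition of $A_\b$, we may choose $J$ in the supremum (\ref{eq:A b}) so that $\min\{A_{S^2}(M,J),A_\D(M,L,J)\}$ exceeds $E$, so no such bubble can form. Therefore the usual Floer-theoretic arguments apply: $\d^2=0$, and continuation maps show that the homology is independent of $(H,J)$. Comparing with the degenerate choice $H\const\eps f$ for a Morse function $f\Colon L\to\R$ (via the PSS-type or direct argument identifying Floer trajectories with Morse flow lines in a small neighborhood of $L$) identifies the Floer homology with the singular homology $H_*(L,\Z_2)$, whose total rank is $\sum_i b_i(L,\Z_2)$. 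Since $\#(L\cap\phi(L))$ is the number of chain generators, it bounds this rank from above, yielding (\ref{eq:L phi L}).

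The main obstacle is not any single algebraic step but the analytic setup in the non-compact setting: one must verify that all the standard Fredholm, transversality, and compactness arguments of Lagrangian Floer theory survive under the mere assumption of bounded geometry of $(M,\om,J)$, and that the supremum in (\ref{eq:A b}) is attained tightly enough to exclude bubbling at the precise energy threshold $d_\c(\phi,\id)$. Handling the sup (rather than a fixed $J$) requires a careful choice: given $d_\c(\phi,\id)<A_\b(M,\om,L)$, pick $J$ in the defining sup so that $\min\{A_{S^2}(M,J),A_\D(M,L,J)\}>d_\c(\phi,\id)$, and use this $J$ throughout; this is where Chekanov's formulation of $A_\b$ as a supremum is tailored exactly to what the Floer argument needs.
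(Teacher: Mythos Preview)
The paper does not prove Theorem~\ref{thm:Ch}; it is quoted verbatim from Chekanov's article \cite{Ch} and used as a black box in the proof of Theorem~\ref{thm:L}. So there is no ``paper's own proof'' to compare against.

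Your sketch is a faithful outline of Chekanov's actual argument in \cite{Ch}: Lagrangian Floer homology over $\Z_2$, with the energy bound $\Vert H\Vert<A_\b(M,\om,L)$ ruling out sphere and disk bubbling for a suitably chosen bounded $J$, and the computation of the homology via a $C^2$-small Morse Hamiltonian on a Weinstein neighborhood of $L$. Two points deserve care if you ever write this out. First, the almost complex structure: you cannot literally use a single fixed $J$ throughout and also achieve transversality; Chekanov fixes a bounded reference $J_0$ with $\min\{A_{S^2}(M,J_0),A_\D(M,L,J_0)\}>\Vert H\Vert$ and then perturbs $J_0$ only inside a large compact set containing all Floer trajectories (whose images are confined by the monotonicity lemma and the energy bound), so that the perturbed structure is still bounded and the bubble thresholds move only slightly. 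Second, the Morse function $f$ lives on $L$, not on $M$; one extends it to a Weinstein tubular neighborhood and cuts off, and the identification of short Floer strips with Morse trajectories is a local argument there. With those caveats, your plan is exactly Chekanov's.
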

\begin{proof}[Proof of Theorem \ref{thm:L}]\setcounter{claim}{0} \setcounter{claim}{0} Let $M,\om$ and $L$ be as in the hypothesis. We may assume without loss of generality that $M$ and $L$ are connected. We define $C:=A_\b(M,\om,L)$. By quantization of energy for pseudo-holomorphic spheres and disks, this number is positive. 
\begin{claim}\label{claim:A A} We have $C\geq A(M,\om,L)$.
\end{claim} 
\begin{proof}[Proof of Claim \ref{claim:A A}] Let $J$ be as in the definition of boundedness of $(M,\om)$. Then $A_\D(M,L,J)\geq A(M,\om,L)$. Furthermore, let $u\Colon S^2\to M$ be a $J$-holomorphic map. Since $M$ is connected, there exists a smooth map $v\Colon S^2\iso\C\cup\{\infty\}\to M$ that is smoothly homotopic to $u$ and satisfies $v(\infty)\in L$. We choose a smooth map $f\Colon \D\to S^2$ that maps the interior $B_1\sub \D$ diffeomorphically and in an orientation preserving way onto $\C$. Then the map $v\circ f\Colon \D\to M$ is smooth and satisfies $v\circ f(S^1)\sub L$. Furthermore, 
\[\int_{S^2}u^*\om=\int_{S^2}v^*\om=\int_\D (v\circ f)^*\om.\]
It follows that the set of numbers occurring in (\ref{eq:A S 2}) is contained in $S(M,\om,N)$ (as defined in (\ref{eq:S})), and hence $A_{S^2}(M,J)\geq A(M,\om,L)$. Claim \ref{claim:A A} follows.
\end{proof}
Let $\phi\in\Ham(M,\om)$ be such that condition (\ref{eq:d phi id L}) is satisfied and $\phi(L)\pitchfork L$. Applying Lemma \ref{le:H} below with $K:=L$ there exists a function $H\in C^\infty_\c\big([0,1]\x M,\R\big)$ such that (\ref{eq:H}) holds. It follows that $d_\c(\phi_H^1,\id)<C$. Furthermore, by the first condition in (\ref{eq:H}) we have $\phi_H^1(L)=\phi(L)$, and hence $\phi_H^1(L)\pitchfork L$. Therefore, the hypotheses of Theorem \ref{thm:Ch} are satisfied with $\phi$ replaced by $\phi_H^1$. Inequality (\ref{eq:L phi L}) follows from the conclusion of this theorem. This proves Theorem \ref{thm:L}.
\end{proof}
Let now $(M,\om)$ be a symplectic manifold and $N\sub M$ a coisotropic submanifold. Recall that $N_\om$ denotes the set of isotropic leaves of $N$, and $\pi_N:N\to N_\om$ the canonical projection. We abbreviate $\pi:=\pi_N$. We define $\wt M:=M\x N_\om$, and $\iota_N$ and $\wt N$ as in (\ref{eq:iota N wt N}). For a map $\phi\Colon M\to M$ we define 
\begin{equation}\label{eq:wt phi}\wt\phi:=\phi\x\id_{N_\om}\Colon \wt M\to \wt M.\end{equation} 
Assume that $N$ is regular. Then we denote by $\big(N_\om,\A_{N,\om},\om_N\big)$ the symplectic quotient of $(N,\om|_N)$. We equip $\wt M$ with the manifold structure determined by the manifold structure on $M$ and $\A_{N,\om}$. Furthermore, we define $\wt\om:=\om\oplus(-\om_N)$. This is a symplectic form on $\wt M$. 
\begin{lemma}\label{le:N} Let $(M,\om)$ be a symplectic manifold, and $N\sub M$ a connected coisotropic submanifold. 
\begin{enua}
\item\label{le:N:phi leaf} For every map $\phi\Colon M\to M$ we have 
\[\wt\phi\circ\iota_N(\Fix(\phi,N))=\wt N\cap\wt\phi(\wt N).\]

\noindent Assume now also that $N$ is regular. Then:
\item\label{le:N:wt N} The map $\iota_N$ is a Lagrangian embedding with respect to $\wt\om$.
\item\label{le:N:non-deg} If $\phi\Colon M\to M$ is a diffeomorphism then the pair $(N,\phi)$ is non-degenerate if and only $\wt\phi(\wt N)\pitchfork \wt N$.
 \end{enua}
\end{lemma}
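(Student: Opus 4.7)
My plan is to treat the three parts in order. Parts (a) and (b) are direct unwindings of definitions; part (c) is the main one, and its key ingredient is the identification of the linear holonomy $\hol^{\om,N}_x$ with a canonical isomorphism induced by the submersion $\pi_N$. I expect this identification to be the main obstacle.

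For (a), a point $x\in N$ satisfies $\phi(x)\in N_x$ iff $N_{\phi(x)} = N_x$; under this condition $\wt\phi\circ\iota_N(x) = (\phi(x), N_x) = \iota_N(\phi(x))$ lies in $\wt N$ and, tautologically, also in $\wt\phi(\wt N)$. Conversely any element of $\wt N\cap \wt\phi(\wt N)$ has the form $(y,N_y) = \wt\phi(x, N_x) = (\phi(x), N_x)$, forcing $y=\phi(x)$ and $N_x=N_{\phi(x)}$, so $x\in\Fix(\phi,N)$ and the element equals $\wt\phi\circ\iota_N(x)$. For (b), $\iota_N(x) = (x,\pi_N(x))$ is the graph of a smooth map, hence automatically a smooth embedding; coisotropy gives $\dim N_\om = 2\dim N - \dim M$, so $\dim\wt N = \dim N = \tfrac12\dim\wt M$; and $\iota_N^*\wt\om = \om|_N - \pi_N^*\om_N = 0$ by the defining property $\pi_N^*\om_N = \om|_N$ of the quotient symplectic form.

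For (c), fix $p\in \wt N\cap\wt\phi(\wt N)$; by (a), $p = \wt\phi\circ\iota_N(x_0) = (\phi(x_0),N_{x_0})$ for a unique $x_0\in\Fix(\phi,N)$, and I set $F := N_{x_0} = N_{\phi(x_0)}$. Since $\wt N$ and $\wt\phi(\wt N)$ are Lagrangian in $(\wt M,\wt\om)$, transversality at $p$ is equivalent to $T_p\wt N\cap T_p\wt\phi(\wt N) = \{0\}$. Direct computation gives
\[ T_p\wt N = \bigl\{(w, d\pi_N(\phi(x_0))w) : w\in T_{\phi(x_0)}N\bigr\},\quad T_p\wt\phi(\wt N) = \bigl\{(d\phi(x_0)v, d\pi_N(x_0)v) : v\in T_{x_0}N\bigr\}, \]
so an element of the intersection corresponds to a vector $v\in T_{x_0}N\cap T_{x_0}\phi^{-1}(N)$ (with $w = d\phi(x_0)v$) satisfying $d\pi_N(\phi(x_0))\,d\phi(x_0)\,v = d\pi_N(x_0)\,v$ in $T_F N_\om$.

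The main step is to rewrite this equation as the equality negating (\ref{eq:hol x pr}). To that end I claim that for any smooth path $x:[0,1]\to F$, the linear holonomy factors as $\hol^{\om,N}_x = d\pi_N(x(1))^{-1}\circ d\pi_N(x(0))$, where each $d\pi_N(x(i))$ is viewed as the induced isomorphism $T_{x(i)}N/T_{x(i)}N^\om \to T_F N_\om$. To prove this I choose any lift $T$ of $\id_{N_{x(0)}\F}$ and a map $u$ as in Proposition \ref{prop:hol} satisfying (\ref{eq:u 0 x}) and (\ref{eq:d u a y}). By regularity of $N$, condition (\ref{eq:u 0 x}) is equivalent to $\pi_N\circ u(t,y) = \pi_N\circ u(0,y)$, and differentiating this identity in $y$ at $y=0$ yields $d\pi_N(x(1))\circ d(u(1,\cdot))(0) = d\pi_N(x(0))\circ T$; since $\ker d\pi_N = TN^\om$ and $\pr^\F T = \id$, this descends on the quotient to the claimed factorization. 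Specializing to the path $x$ with $x(0)=x_0$ and $x(1)=\phi(x_0)$, applying $d\pi_N(\phi(x_0))^{-1}$ to the equation at the end of the previous paragraph converts it precisely into $\hol^{\om,N}_x\,\pr_{x_0}v = \pr_{\phi(x_0)}\,d\phi(x_0)\,v$, which is the equality negating (\ref{eq:hol x pr}). This yields the equivalence in part (c).
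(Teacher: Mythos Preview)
Your proof is correct and follows essentially the same route as the paper. The paper factors the holonomy identification you prove inline into separate lemmas (Lemma~\ref{le:F x}, which reduces (\ref{eq:hol x pr}) to $\pi_*\phi_*v\neq\pi_*v$, and Lemma~\ref{le:X/R}(\ref{le:X/R:hol}), which supplies the factorization $\hol^{\om,N}_x=\overline{d\pi_N(x(1))}^{-1}\circ\overline{d\pi_N(x(0))}$), but the content and the argument---differentiating the leaf-preservation condition $\pi_N\circ u(t,\cdot)=\pi_N\circ u(0,\cdot)$ and using $\ker d\pi_N=TN^\om$---are the same. Your graph argument for the embedding in~(b) is slightly slicker than the paper's explicit openness check, and your observation that the holonomy depends only on the endpoints makes clear why quantifying over all paths $x$ in the non-degeneracy definition causes no trouble.
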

For the proof of Lemma \ref{le:N} we need the following.
\begin{lemma}\label{le:F x} Let $(M,\om)$ be a symplectic manifold, $N\sub M$ a regular coisotropic submanifold, $\phi:M\to M$ a diffeomorphism, $F\sub N$ a leaf, and $x\in C^\infty([0,1],F)$ a path. Assume that $x(1)=\phi\circ x(0)$, and let $v\in T_{x(0)}N\cap T_{x(0)}\phi^{-1}(N)$. Then (\ref{eq:hol x pr}) is equivalent to $\pi_*\phi_*v\neq\pi_*v$. 
\end{lemma}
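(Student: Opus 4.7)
The plan is to factor both sides of the inequality (\ref{eq:hol x pr}) through the differential of the projection $\pi = \pi_N : N \to N_\om$ and use an isomorphism between the normal bundle of the isotropic foliation and the tangent space of $N_\om$. Write $\F := \F^{TN^\om}$. Since $\pi$ is a submersion whose fibers are the isotropic leaves, the differential $d\pi_{x_0} : T_{x_0} N \to T_{\pi(x_0)} N_\om$ has kernel $(T_{x_0} N)^\om = T_{x_0} \F$, and thus descends to an isomorphism $\bar\pi_{x_0} : N_{x_0}\F \to T_{\pi(x_0)} N_\om$ characterized by $\bar\pi_{x_0} \circ \pr_{x_0} = d\pi_{x_0}$. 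Since $x(0)$ and $x(1)$ lie in the same leaf $F$, both $\bar\pi_{x(0)}$ and $\bar\pi_{x(1)}$ land in the same space $T_{\pi\circ x(0)}N_\om$.

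The key intermediate step is the identity
\[
\bar\pi_{x(1)} \circ \hol^{\om,N}_x = \bar\pi_{x(0)},
\]
which I would prove directly from the definition (\ref{eq:hol F x}). Namely, pick a linear lift $T : N_{x(0)}\F \to T_{x(0)} N$ with $\pr_{x(0)} \circ T = \id$ and a map $u \in C^\infty([0,1] \times N_{x(0)}\F, N)$ satisfying (\ref{eq:u 0 x}) and (\ref{eq:d u a y}), so that $\hol^{\om,N}_x = \pr_{x(1)} \circ d(u(1,\cdot))(0)$. The condition $\F_{u(t,y)} = \F_{u(0,y)}$ means that $\pi \circ u(t,y)$ is independent of $t$, so differentiating at $y = 0$ and $t = 0,1$ yields $d\pi_{x(0)} \circ T = d\pi_{x(1)} \circ d(u(1,\cdot))(0)$; using $\bar\pi \circ \pr = d\pi$ and $\pr_{x(0)} \circ T = \id$, the left-hand side simplifies to $\bar\pi_{x(0)}$ and the right-hand side to $\bar\pi_{x(1)} \circ \hol^{\om,N}_x$.

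Given this, the lemma follows immediately: applying the isomorphism $\bar\pi_{x(1)}$ to both sides of the equation $\hol^{\om,N}_x \pr_{x(0)} v = \pr_{x(1)} d\phi(x(0)) v$ transforms the left-hand side into $\bar\pi_{x(0)} \pr_{x(0)} v = d\pi_{x(0)} v = \pi_* v$ and the right-hand side into $\bar\pi_{x(1)} \pr_{x(1)} d\phi(x(0)) v = d\pi_{x(1)} d\phi(x(0)) v = \pi_* \phi_* v$. Since $\bar\pi_{x(1)}$ is an isomorphism, the failure of equality is preserved, giving the claimed equivalence. I expect no real obstacles here — the only subtle point is verifying the commutativity $\bar\pi_{x(1)} \circ \hol^{\om,N}_x = \bar\pi_{x(0)}$, which essentially says that holonomy tracks the identification of normal spaces with $T N_\om$, and follows from unpacking the definition of linear holonomy together with the fact that $u(t,\cdot)$ stays in a fixed leaf.
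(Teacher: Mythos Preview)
Your proof is correct and matches the paper's approach essentially line for line: your isomorphism $\bar\pi_{x_0}$ is exactly the paper's $\Phi_{x_0}$, and the identity $\bar\pi_{x(1)}\circ\hol^{\om,N}_x=\bar\pi_{x(0)}$ that you establish directly from the definition of linear holonomy is precisely what the paper invokes as Lemma~\ref{le:X/R}(\ref{le:X/R:hol}) (whose proof in the appendix is the same differentiation argument you give). The only difference is packaging --- the paper factors out the commutativity as a separate lemma, while you prove it inline.
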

\begin{proof}[Proof of Lemma \ref{le:F x}]\setcounter{claim}{0} We fix $y\in N$. By Lemma \ref{le:X/R}(\ref{le:X/R:ker}) below we have $\ker d\pi(y)=T_yN^\om$. Hence we may define
\[\Phi_y\Colon (T_yN)_\om=T_yN/T_yN^\om\to T_{N_y}(N_\om),\quad \Phi_y(w+T_yN^\om):=d\pi(y)w.\]
Since $d\pi(y)\Colon T_yN\to (T_yN)_\om$ is surjective, $\Phi_y$ is an isomorphism. Furthermore, $\Phi_y\pr_y=d\pi(y)$, where $\pr_y:T_yN\to(T_yN)_\om$ denotes the canonical projection. Hence Lemma \ref{le:X/R}(\ref{le:X/R:hol}) below implies that $d\pi(x(0))=\Phi_{x(1)}\hol^{\om,N}_x\pr_{x(0)}$. It follows that 
\[(\pi_*\phi_*-\pi_*)v=\Phi_{x(1)}\big(\pr_{x(1)}\phi_*-\hol^{\om,N}_x\pr_{x(0)}\big)v.\]
Since $\Phi_{x(1)}$ is an isomorphism, the statement of Lemma \ref{le:F x} follows.
\end{proof}
\begin{proof}[Proof of Lemma \ref{le:N}]\setcounter{claim}{0} {\bf Statement (\ref{le:N:phi leaf})} follows from the definition of a leafwise fixed point. 

{\bf We prove (\ref{le:N:wt N}).} That $\iota_N$ is an injective Lagrangian immersion follows immediately from the definitions. To see that it is an open map onto its image $\wt N$, let $U\sub N$ be open. We choose an open subset $V\sub M$ such that $V\cap N=U$, and denote by $\pi_1\Colon \wt M=M\x N_\om\to M$ the projection onto the first factor. Then $\iota_N(U)$ is the intersection of $\wt N$ with the open subset $\pi_1^{-1}(V)\sub \wt M$, hence it is relatively open in $\wt N$. This proves (\ref{le:N:wt N}).

{\bf We prove (\ref{le:N:non-deg}).} Assume that $x\in \Fix(\phi,N)$ and denote $\wt x:=\wt\phi\circ \iota_N(x)$. Note that by assertion (\ref{le:N:phi leaf}) $\wt x\in \wt N\cap\wt\phi(\wt N)$. 
\begin{claim}\label{claim:wt phi wt N} $\wt N$ and $\wt\phi(\wt N)$ intersect transversely at $\wt x$ if and only if 
\[0\neq v\in T_{x}N\cap T_{x}\phi^{-1}(N)\Then\pi_*\phi_*v\neq\pi_*v.\]
\end{claim}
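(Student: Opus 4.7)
The plan is to identify the tangent spaces $T_{\wt x}\wt N$ and $T_{\wt x}\wt\phi(\wt N)$ as subspaces of $T_{\wt x}\wt M = T_{\phi(x)}M\oplus T_{N_x}N_\om$ by direct computation, and then read off the transversality condition. By Lemma \ref{le:N}(\ref{le:N:wt N}) the submanifold $\wt N$ is Lagrangian, so both $\wt N$ and its diffeomorphic image $\wt\phi(\wt N)$ are of dimension $\tfrac12\dim\wt M = \dim N$. Hence transversality at the single point $\wt x$ is equivalent to $T_{\wt x}\wt N\cap T_{\wt x}\wt\phi(\wt N)=\{0\}$.

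To compute the two tangent spaces, I would first observe that since $x\in\Fix(\phi,N)$ we have $\phi(x)\in N_x$, so $N_{\phi(x)}=N_x$ and $\wt x=(\phi(x),N_x)=\iota_N(\phi(x))$. From $\iota_N(y)=(y,\pi(y))$ and the definition $\wt\phi=\phi\times\id_{N_\om}$ in (\ref{eq:wt phi}), a direct chain-rule computation yields
\[ T_{\iota_N(y)}\wt N=\bigl\{(v,\pi_*v)\,\big|\,v\in T_yN\bigr\}\quad\text{for every }y\in N,\qquad d\wt\phi(\iota_N(x))=d\phi(x)\oplus\id. \]
Evaluating the first formula at $y=\phi(x)$ and pushing the tangent space at $\iota_N(x)$ forward by $d\wt\phi(\iota_N(x))$ gives
\[ T_{\wt x}\wt N=\bigl\{(w,\pi_*w)\,\big|\,w\in T_{\phi(x)}N\bigr\},\qquad T_{\wt x}\wt\phi(\wt N)=\bigl\{(d\phi(x)v,\pi_*v)\,\big|\,v\in T_xN\bigr\}. \]

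Finally, an element of the intersection corresponds to a pair $v\in T_xN$, $w\in T_{\phi(x)}N$ with $w=d\phi(x)v$ and $\pi_*w=\pi_*v$. The first equation forces $v\in T_xN\cap T_x\phi^{-1}(N)$, the second reads $\pi_*\phi_*v=\pi_*v$, and the resulting vector $(w,\pi_*v)$ vanishes precisely when $v=0$ (using that $\phi$ is a diffeomorphism). Thus $T_{\wt x}\wt N\cap T_{\wt x}\wt\phi(\wt N)=\{0\}$ is equivalent to the implication $0\neq v\in T_xN\cap T_x\phi^{-1}(N)\Then \pi_*\phi_*v\neq\pi_*v$, which is exactly the statement of the claim. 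No genuine obstacle arises; the only subtlety is to combine the explicit description of $T\wt N$ coming from $\iota_N$ with the Lagrangian dimension count in order to reduce transversality to trivial intersection.
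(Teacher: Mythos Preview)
Your argument is correct and follows essentially the same route as the paper: compute $T_{\iota_N(y)}\wt N=\{(v,\pi_*v)\mid v\in T_yN\}$, apply this at $y=\phi(x)$ and push forward by $\wt\phi_*$ at $\iota_N(x)$ to obtain the two tangent spaces, then read off the intersection. The only difference is that you make the Lagrangian dimension count (reducing transversality to trivial intersection) explicit, whereas the paper leaves it implicit.
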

\begin{proof}[Proof of Claim \ref{claim:wt phi wt N}] For $y\in N$ we have
\begin{equation}\label{eq:T iota}T_{\iota_N(y)}\wt N={\iota_N}_*T_yN=\big\{(v,\pi_*v)\,\big|\,v\in T_yN\big\}.\end{equation}
Setting $y:=x$, it follows that
\begin{equation}\label{eq:T wt x '}T_{\wt x}\wt\phi(\wt N)=\wt\phi_*{\iota_N}_*T_{x}N=\big\{(\phi_*v,\pi_*v)\,\big|\,v\in T_{x}N\big\}.\end{equation}
On the other hand, since $x\in\Fix(\phi,N)$, we have $\iota_N\circ\phi(x)=\wt\phi\circ\iota_N(x)=\wt x$. Therefore, applying (\ref{eq:T iota}) with $y:=\phi(x)$, and combining with (\ref{eq:T wt x '}), we obtain
\[T_{\wt x}\wt N\cap T_{\wt x}\wt\phi(\wt N)=\big\{\big(\phi_*v,\pi_*v\big)\,\big|\,v\in T_xN:\,\phi_*v\in T_{\phi(x)}N,\,\pi_*\phi_*v=\pi_*v\big\}.\]
Claim \ref{claim:wt phi wt N} follows from this.
\end{proof}
Assume now that $(N,\phi)$ is non-degenerate. Let $\wt x_0\in\wt N\cap\wt\phi(\wt N)$. By assertion (\ref{le:N:phi leaf}) there exists $x_0\in \Fix(\phi,N)$ such that $\wt\phi\circ\iota_N(x_0)=\wt x_0$. We choose a path $x\in C^\infty([0,1],N_{x_0})$ such that $x(0)=x_0$ and $x(1)=\phi(x_0)$. If $0\neq v\in T_{x_0}N\cap T_{x_0}\phi^{-1}(N)$ then by (\ref{eq:hol x pr}) and Lemma \ref{le:F x} we have $\pi_*\phi_*v\neq\pi_*v$. Therefore by Claim \ref{claim:wt phi wt N} with $x$ replaced by $x_0$ the manifolds $\wt\phi(\wt N)$ and $\wt N$ intersect transversely at $\wt x_0$. It follows that $\wt\phi(\wt N)\pitchfork\wt N$.

Conversely, assume now that $\wt\phi(\wt N)\pitchfork\wt N$. Let $F\sub N$ be a leaf, and $x\in C^\infty([0,1],F)$ a path, and assume that $x(1)=\phi\circ x(0)$, and $0\neq v\in T_{x(0)}N\cap T_{x(0)}\phi^{-1}(N)$. By Claim \ref{claim:wt phi wt N} we obtain $\pi_*\phi_*v\neq\pi_*v$. Therefore, by Lemma \ref{le:F x} the inequality (\ref{eq:hol x pr}) is satisfied. It follows that $(N,\phi)$ is non-degenerate. This proves (\ref{le:N:non-deg}) and completes the proof of Lemma \ref{le:N}.
\end{proof}
\begin{lemma}[Key Lemma]\label{le:A M om N} Let $(M,\om)$ be a symplectic manifold, $N\sub M$ a closed, regular coisotropic submanifold, and let $\wt M,\wt\om$ and $\wt N$ be defined as in (\ref{eq:wt M wt om},\ref{eq:iota N wt N}). Then 
\begin{equation}\label{eq:A om N}A(M,\om,N)= A(\wt M,\wt \om,\wt N).\end{equation}
\end{lemma}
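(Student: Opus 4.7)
The plan is to prove the two inequalities $A(M,\om,N) \le A(\wt M,\wt\om,\wt N)$ and $A(\wt M,\wt\om,\wt N) \le A(M,\om,N)$ separately, by establishing the corresponding inclusions between the action spectra. The easy inclusion $S(M,\om,N) \sub S(\wt M,\wt\om,\wt N)$ is straightforward: given a smooth $u \Colon \D \to M$ with $u(S^1) \sub F$ for some $F \in N_\om$, define $\wt u \Colon \D \to \wt M = M \x N_\om$ by $\wt u(z) := (u(z),F)$, constant in the second factor. Then $\wt u(S^1) \sub \wt N$ since $N_{u(e^{i\theta})} = F$, and $\wt u^*\wt\om = u^*\om - 0 = u^*\om$.

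For the reverse inclusion $S(\wt M,\wt\om,\wt N) \sub S(M,\om,N)$, which is the heart of the argument, I follow the idea sketched after the statement of Theorem \ref{thm:leaf}. Let $\wt u = (v,w') \Colon \D \to \wt M$ with $\wt u(S^1) \sub \wt N$, so that $v(e^{i\theta}) \in N$ and $w'(e^{i\theta}) = N_{v(e^{i\theta})}$. Since $N$ is regular and closed, by Ehresmann's theorem $\pi_N \Colon N \to N_\om$ is a smooth fiber bundle. Consider the homotopy
\[
h \Colon [0,1] \x S^1 \to N_\om, \quad h(s,\theta) := w'((1-s)e^{i\theta}),
\]
which interpolates from $h(0,\cdot) = \pi_N \circ v|_{S^1}$ to the constant $h(1,\cdot) = w'(0)$. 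Choosing a smooth horizontal distribution $H \sub TN$ complementary to $TN^\om$, I lift $h$ via parallel transport along $H$ to a smooth $\tilde h \Colon [0,1] \x S^1 \to N$ with $\tilde h(0,\theta) = v(e^{i\theta})$. At $s=1$ the loop $\theta \mapsto \tilde h(1,\theta)$ lies entirely in the single isotropic leaf $F := \pi_N^{-1}(w'(0))$.

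I then concatenate $v$ with $\tilde h$: identify $\D$ with the union of the inner disk $\{|z| \le 1/2\}$ and the outer annulus $\{1/2 \le |z| \le 1\}$, and set $u(z) := v(2z)$ on the inner disk and $u(re^{i\theta}) := \tilde h(2r-1, \theta)$ on the outer annulus, with a standard smoothing near $r=1/2$ using a cutoff in the $r$-direction (which does not change the integral since $\om$ is a $2$-form). Then $u(S^1) = \tilde h(1,S^1) \sub F$. Since $\tilde h$ takes values in $N$, where $\om|_N = \pi_N^*\om_N$, one has $\tilde h^*\om = h^*\om_N$. The reparametrization $(s,\theta) \mapsto (1-s)e^{i\theta}$ covers $\D$ with reversed orientation, so
\[
\int_{[0,1]\x S^1} \tilde h^*\om = \int_{[0,1]\x S^1} h^*\om_N = -\int_\D (w')^*\om_N,
\]
and therefore
\[
\int_\D u^*\om = \int_\D v^*\om + \int_{[0,1]\x S^1} \tilde h^*\om = \int_\D v^*\om - \int_\D (w')^*\om_N = \int_\D \wt u^*\wt\om.
\]

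The main technical obstacle is ensuring that the lift $\tilde h$ is smooth and that the concatenation $u$ can be made smooth without disturbing the action computation. The smooth horizontal lift is clean because the domain $[0,1] \x S^1$ is a one-parameter family of paths in $N_\om$, and parallel transport along a smooth horizontal distribution produces a smooth lift with prescribed smooth initial data on $\{0\} \x S^1$; the smoothing at the seam $r=1/2$ is absorbed into a reparametrization in $r$ alone, which leaves the integral of the pulled-back $2$-form unchanged. Once the two inclusions on action spectra are in hand, taking infima over positive elements yields $A(M,\om,N) = A(\wt M,\wt\om,\wt N)$.
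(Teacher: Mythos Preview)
Your proof is correct and follows essentially the same strategy as the paper's: the easy inclusion via the constant-second-factor map, and the hard inclusion via lifting the radial homotopy of $w'$ through the fiber bundle $\pi_N$ and concatenating with $v$. The only cosmetic difference is that the paper invokes Ehresmann's covering homotopy theorem directly to produce the lift $f$ with $\pi_N\circ f=f'$ and $f(1,\cdot)=v|_{S^1}$, whereas you implement the lift explicitly via parallel transport along a chosen horizontal distribution; the reparametrization $\rho$ and the smoothing at the seam are handled the same way in both arguments.
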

\begin{proof}[Proof of Lemma \ref{le:A M om N}]\setcounter{claim}{0} In order to show that (\ref{eq:A om N}) with ``$=$'' replaced by ``$\geq$'' holds, let $u\in C^\infty(\D,M)$ be a map such that there exists a leaf $F\sub N$ satisfying $u(S^1)\sub F$. Then the map 
\[\wt u\Colon \D\to \wt M=M\x N_\om,\quad \wt u(z):=(u(z),N_{u(1)})\]
satisfies $\int_\D\wt u^*\wt\om=\int_\D u^*\om$. The inequality ``$\geq$'' in (\ref{eq:A om N}) follows.

To show the opposite inequality, let $\wt u=(v,w')\in C^\infty(\D,\wt M)$ be a map such that $\wt u(S^1)\sub\wt N$. It suffices to prove that there exists a map $u\in C^\infty(\D,M)$ such that $u(S^1)$ is contained in an isotropic leaf of $N$, and
\begin{equation}\label{eq:int D u om}\int_\D u^*\om=\int_\D \wt u^*\wt\om.\end{equation}
To see this, we choose a smooth map $\rho\Colon [0,1]\to [0,1]$ such that 
\begin{eqnarray*}&\rho(1/2)=1,\quad \rho(r)=r,\,\forall r\in[0,1/4],\quad\rho(r)=1-r,\,\forall r\in [3/4,1],&\\
&\rho'(r)>0,\,\forall r\in (0,1/2),\quad \rho'(r)<0,\,\forall r\in(1/2,1),&
\end{eqnarray*}
and all derivatives of $\rho$ vanish at $1/2$. We define $\phi\Colon \D\to\D$ by $\phi(rz):=\rho(r)z$, for $r\in[0,1]$ and $z\in S^1$. 
\begin{claim}\label{claim:u D M} There exists a smooth map $u\Colon \D\to M$ such that
\[u(z)=v\circ\phi(z),\quad\textrm{if }|z|\leq1/2,\]
\begin{equation}\label{eq:u D B}u(z)\in N,\quad \pi \circ u(z)=w'\circ \phi(z),\quad\textrm{if }1/2<|z|\leq1.\end{equation}
\end{claim}
\begin{proof}[Proof of Claim \ref{claim:u D M}] We define $f'\Colon [0,1]\x S^1\to N_\om$ by $f'(t,z):=w'(tz)$. For every $z\in S^1$ we have by assumption $\wt u(z)\in\wt N$, i.e. $\pi\circ v(z)=w'(z)=f'(1,z)$. Hence $f'$ is a smooth homotopy in $N_\om$, ending at the map $\pi\circ v|_{S^1}$. Since $N$ is closed and the projection $\pi\Colon N\to N_\om$ is a submersion, results by Ehresmann imply that there exists a smooth map $f\Colon [0,1]\x S^1\to N$ such that $\pi\circ f=f'$ and $f(1,\cdot)=v|_{S^1}$. (See \cite{Eh}, the proposition on p. 31 and the second proposition on p. 35.) We define $u\Colon \D\to M$ by
\[u(z):=\left\{\begin{array}{ll}
v\circ \phi(z),&\textrm{if }|z|\leq1/2,\\
f\big(\rho(|z|),z/|z|\big),&\textrm{if }1/2<|z|\leq1.\end{array}\right.\]
This map has the required properties. This proves Claim \ref{claim:u D M}.
\end{proof}
We choose a map $u$ as in Claim \ref{claim:u D M}. By the definition of $\wt\om$ we have $\wt u^*\wt\om=v^*\om-{w'}^*\om_N$. Therefore, equality (\ref{eq:int D u om}) is a consequence of the following: 
\begin{claim}\label{claim:int bar B 1 2} We have 
\begin{equation}\label{eq:int bar B 1 2}\int_{B_{1/2}} u^*\om=\int_\D v^*\om,\quad \int_{\D\wo B_{1/2}}u^*\om=-\int_\D{w'}^*\om_N.\end{equation} 
\end{claim}
\begin{pf}[Proof of Claim \ref{claim:int bar B 1 2}] The first identity in (\ref{eq:int bar B 1 2}) follows from the fact that $\phi$ restricts to a diffeomorphism from $B_{1/2}$ onto $B_1$. To prove the second identity, observe that by the definition of the symplectic form $\om_N$ on the quotient $N_\om$, and (\ref{eq:u D B}), we have on $\D\wo B_{1/2}$, 
\begin{equation}\label{eq:u om u pi N}u^*\om=u^*\pi^*\om_N=(\pi\circ u)^*\om_N=(w'\circ\phi)^*\om_N=\phi^*{w'}^*\om_N.\end{equation}
Since $\phi$ restricts to an orientation reversing diffeomorphism from $B_1\wo \bar B_{1/2}$ onto $B_1\wo \{0\}$, (\ref{eq:u om u pi N}) implies that $\int_{B_1\wo \bar B_{1/2}}u^*\om=-\int_{B_1\wo \{0\}}{w'}^*\om_N$. This implies the second identity in (\ref{eq:int bar B 1 2}).  This proves Claim \ref{claim:int bar B 1 2} and completes the proof of Lemma \ref{le:A M om N}. \end{pf}\end{proof}

\section{Proofs of the main results}\label{sec:proofs}
\begin{proof}[Proof of Theorem \ref{thm:leaf}]\setcounter{claim}{0} Let $M,\om$ and $N$ be as in the hypothesis of this theorem. Without loss of generality we may assume that $N$ is connected. Since $N$ is regular, the symplectic quotient $\big(N_\om,\A_{N,\om|_N},\om_N\big)$ of $(N,\om|_N)$ is well-defined. We define $\wt M,\wt\om$ and $\wt N$ as in (\ref{eq:wt M wt om},\ref{eq:iota N wt N}). Since $N$ is closed, $N_\om$ is closed. By a straight-forward argument the product of two bounded symplectic manifolds is bounded. It follows that $(\wt M,\wt\om)=\big(M\x N_\om,\om\oplus(-\om_N)\big)$ is bounded. Furthermore, by Lemma \ref{le:N}(\ref{le:N:wt N}) $\wt N\sub\wt M$ is a Lagrangian submanifold. It is closed since $N$ is closed. Therefore, applying Theorem \ref{thm:L} with $M,\om$ replaced by $\wt M,\wt\om$, and $L:=\wt N$, there exists a positive constant $C\geq A(\wt M,\wt \om,\wt N)$ such that the statement of that theorem holds. We check that this constant has the required properties: By Lemma \ref{le:A M om N}, we have $C\geq A(M,\om,N)$. Let now $\phi\in\Ham(M,\om)$ be such that $(N,\phi)$ is non-degenerate and inequality (\ref{eq:d phi id M}) is satisfied. We define $\wt\phi$ as in (\ref{eq:wt phi}). Using Lemma \ref{le:phi psi} below, it follows that
\[d^{\wt M,\wt\om}(\wt\phi,\id_{\wt M})\leq d^{M,\om}(\phi,\id)<C.\]
Furthermore, by non-degeneracy of $(N,\phi)$ and Lemma \ref{le:N}(\ref{le:N:non-deg}) we have $\wt\phi(\wt N)\pitchfork\wt N$. Therefore, by the conclusion of Theorem \ref{thm:L}, we have
\begin{equation}\label{eq:wt N cap}\big|\big(\wt N\cap\wt\phi (\wt N)\big)\big|\geq\sum_ib_i(\wt N,\Z_2).\end{equation}
On the other hand, parts (\ref{le:N:phi leaf}) and (\ref{le:N:wt N}) of Lemma \ref{le:N} imply that 
\[\big|\Fix(\phi,N)\big|=\big|\big(\wt N\cap\wt\phi (\wt N)\big)\big|,\quad b_i(\wt N,\Z_2)=b_i(N,\Z_2),\,\forall i.\]
Combining this with (\ref{eq:wt N cap}), inequality (\ref{eq:Fix phi}) follows. This proves Theorem \ref{thm:leaf}.
\end{proof}
\begin{proof}[Proof of Theorem \ref{thm:f}]\setcounter{claim}{0} Let $M,\om,N,f,\iota$ and $\eps$ be as in the hypothesis. We choose a Riemannian metric $g$ on $M$, and denote by $|\cdot|,\ell$ and $d$ the norm on $T_xM$ (for $x\in M$), the length functional, and the distance function, all with respect to $g$. Furthermore, for $x,y\in M$, a linear map $T:T_xM\to T_yM$, and a bilinear map $b:T_xM\x T_xM\to \R$ we define 
\begin{eqnarray*}|T|_\op&:=&\max\big\{|Tv|\,\big|\,v\in T_xM,\,|v|=1\big\},\\
|b|&:=&\max\big\{|b(v,w)|\,\big|\,v,w\in T_xM,\,|v|=|w|=1\big\}.\end{eqnarray*}

Assume first that there exists a closed presymplectic manifold $(M',\om')$ and an $\om'$-horizontal distribution $H\sub TM'$ such that $N$ is the zero-section of $(T{M'}^{\om'})^*$, $M$ is an open neighborhood of $N$, and $\om=\Om_{\om',H}$ (as defined in (\ref{eq:Om om H}) with $\om$ replaced by $\om'$). We identify $M'$ with $N$. We denote by $\pi:(TN^\om)^*\to N$ the canonical projection. We choose a smooth function $\rho:M\to \R$ that has compact support and equals 1 in a neighborhood of $N$. We define $F:=\rho\cdot(f\circ \pi)\Colon M\to \R$. Let $t\in\R$, and $x_0\in \Crit f\sub N$. Then $dF(x_0)=df(x_0)d\pi(x_0)=0$, hence $X_F(x_0)=0$, and therefore $x_0\in \Fix(\phi_F^t,N)$. It follows that $\Crit f\sub \Fix(\phi_F^t,N)$. 
\begin{claim}\label{claim:t 1} There exists a number $t_1>0$ such that for every $t\in(0,t_1]$, we have $\Fix(\phi_F^t,N)\sub\Crit f$. 
\end{claim}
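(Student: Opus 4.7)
The strategy is to show pointwise that at every non-critical point $x_0 \in N \setminus \Crit f$, the flow $\phi_F^t$ carries $x_0$ out of its isotropic leaf $N_{x_0}$ for small $t>0$, and then uniformize in $x_0$ by compactness of $N$ (which is compact since $M'$ is closed).

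First I would compute $X_F$ at points of the zero section using Marle's explicit expression $\Om_{\om',H} = \pi^*\om' + \iota_H^*\om_\can$. In the splitting $T_{x_0}M = H_{x_0} \oplus T_{x_0}N^{\om'} \oplus V_{x_0}$, where $V_{x_0}$ is the vertical cotangent fiber, a direct evaluation yields
\[\Om_{x_0}\bigl((v_1^H+v_1^F,\dot\al_1),(v_2^H+v_2^F,\dot\al_2)\bigr) = \om'(v_1^H,v_2^H) + \dot\al_2(v_1^F) - \dot\al_1(v_2^F).\]
Matching this with $dF(x_0) = df(x_0)\circ d\pi(x_0)$, the equation $\Om(X_F,\cdot) = dF$ gives $X_F(x_0) = X_F^H(x_0) + X_F^V(x_0)$, with $X_F^H(x_0) \in H_{x_0}$ the $\om'|_H$-Hamiltonian vector of $df(x_0)|_{H_{x_0}}$, with $X_F^V(x_0)\in V_{x_0}$ corresponding to $-df(x_0)|_{T_{x_0}N^{\om'}}$, and crucially with no component in the leaf direction $T_{x_0}N^{\om'}$. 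In particular $X_F(x_0)=0$ iff $df(x_0)=0$ iff $x_0\in\Crit f$.

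Next I would use two transversality observations. The vertical component $X_F^V$ points out of $N$: if $X_F^V(x_0)\neq 0$ then $\phi_F^t(x_0)\notin N$ for small $t>0$, hence $\phi_F^t(x_0)\notin N_{x_0}$. For the horizontal component, consider $g := \pi_N\circ \pi\Colon M\to N_\om$. At $x_0\in N$ one has $dg(x_0)X_F(x_0) = d\pi_N(x_0)X_F^H(x_0)$, and since $H_{x_0}$ is a complement to $T_{x_0}N^{\om'} = \ker d\pi_N(x_0)$, this is nonzero whenever $X_F^H(x_0)\neq 0$. In that case $g(\phi_F^t(x_0))\neq g(x_0)$ for small $t>0$, so again $\phi_F^t(x_0)\notin N_{x_0}$. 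Thus pointwise transversality holds on $N\setminus\Crit f$.

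To upgrade this to a uniform threshold I argue by contradiction: if no such $t_1$ exists, there are $t_n\downarrow 0$ and $x_n\in N\setminus\Crit f$ with $\phi_F^{t_n}(x_n)\in N_{x_n}$. Compactness of $N$ yields a limit $x_n\to x_\infty$. If $x_\infty\notin\Crit f$, both estimates above extend by continuity to a neighborhood with uniform lower bounds on $|X_F^H|$ or $|X_F^V|$, giving an immediate contradiction.

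The main obstacle is the case $x_\infty\in\Crit f$, where $|X_F(x_n)|\to 0$ and the transverse motion degenerates together with $t_n$. To handle it I would exploit that $X_F|_N$ has no component along the leaves, so the segment $s\mapsto \phi_F^s(x_n)$ on $[0,t_n]$ is approximately tangent to a horizontal (leaf-complementary) distribution on a Marle neighborhood. Combined with the appendix estimate bounding the leaf displacement of a horizontal-tangent path with endpoints in the same leaf, one infers that the leaf-correction needed to close the segment back to $x_n$ is of order $o(t_n)$ in $C^0$. The modified vector field $X_F + Y_n$, with $Y_n$ tangent to the leaves and of vanishing norm, then has $x_n$ as a genuine periodic orbit of period $t_n\downarrow 0$. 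Applying the appendix result that fast almost-periodic orbits of a $C^1$ vector field are constant, with threshold controlled by $\|DX_F\|_\infty$ on a fixed neighborhood of $x_\infty$, forces $X_F(x_n) = 0$ for large $n$, contradicting $x_n\notin\Crit f$. This will close the argument and give the desired $t_1$.
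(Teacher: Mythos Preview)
Your proposal identifies the right two appendix ingredients (Proposition~\ref{prop:X F} and Lemma~\ref{le:ell f}), but the route you take to apply them has a genuine gap, and the contradiction/case-analysis scaffolding is unnecessary.

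The gap is in your handling of the case $x_\infty\in\Crit f$. You want to feed the point $x_n$ into Lemma~\ref{le:ell f}, which requires an estimate of the form $d\big(x_n,\phi_F^{t_n}(x_n)\big)\leq\ell(t_n,x_n)\,f(\ell(t_n,x_n))$ for a \emph{fixed} vector field and a fixed $f$ with $f(0)=0$. You try to reach this via ``approximate tangency'' of the flow segment to a horizontal distribution and then a modified field $X_F+Y_n$. But the flow $s\mapsto\phi_F^s(x_n)$ generically leaves $N$ (your own computation shows $X_F$ has a nonzero vertical component at non-critical points), so there is no horizontal distribution on the ambient $M$ to which Proposition~\ref{prop:X F} applies; that proposition lives on the foliated manifold $N$. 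The modification $X_F+Y_n$ does not help either: Lemma~\ref{le:ell f} gives a threshold for one vector field, not a sequence, and you have not shown why the leaf-correction $Y_n$ should be $o(t_n)$---that estimate is precisely what is at stake.

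The paper bypasses all of this with one clean observation (Lemma~\ref{le:f M}): the projection $\pi\colon (TN^\om)^*\to N$ satisfies $\pi_*X_F(x)\in H_{\pi(x)}$ for \emph{every} $x$, not just on the zero section. Hence for any $x_0\in\Fix(\phi_F^t,N)$ the projected path $x(s):=\pi\circ\phi_F^s(x_0)$ lies in $N$, is exactly $H$-tangent, and has $x(0)=x_0$, $x(t)=\phi_F^t(x_0)\in N_{x_0}$. Proposition~\ref{prop:X F} then yields $d(x_0,\phi_F^t(x_0))\leq C\,\ell(t,x_0)^2$ directly, and Lemma~\ref{le:ell f} with $f(a)=Ca$ produces the uniform $t_1$. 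No contradiction, no case split, no modified vector fields. Your transversality discussion for $x_\infty\notin\Crit f$ is correct but becomes redundant once you use this projection.
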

\begin{proof}[Proof of Claim \ref{claim:t 1}] For $(t_0,x_0)\in[0,\infty)\x M$ we abbreviate 
\[\ell(t_0,x_0):=\ell\big([0,t_0]\ni t\mapsto\phi_X^t(x_0)\in M\big).\]
\begin{claim}\label{claim:g} There exists a constant $C$ such that for every $t\in[0,\infty)$, and $x_0\in\Fix(\phi_F^t,N)$, we have
\begin{equation}\label{eq:d g x 0}d(x_0,\phi_F^t(x_0))\leq C\ell(t,x_0)^2.\end{equation} 
\end{claim}
\begin{proof}[Proof of Claim \ref{claim:g}] We denote by $g|_N$ the restriction of $g$ to $TN\oplus TN$. Applying Proposition \ref{prop:X F} below with $\F$ the isotropic foliation of $N$, the horizontal distribution $H$, and $M,g$ replaced by $N,g|_N$, there exists a constant $C$ such that the conclusion of that lemma holds. Let $t\in[0,\infty)$ and $x_0\in\Fix(\phi_F^t,N)$. We define $x:[0,t]\to N$ by $x(s):=\pi\circ\phi_F^s(x_0)$. By Lemma \ref{le:f M} below with $M,\om$ replaced by $N,\om|_N$, we have $\dot x(s)=\pi_*X_F\circ\phi_F^s(x_0)\in H_{x(s)}$, for every $s\in[0,t]$. Furthermore, $x(t)=\phi_F^t(x_0)$ lies in the isotropic leaf through $x(0)$, since $x(0)=x_0\in\Fix(\phi_F^t,N)$. Therefore, the conditions of Proposition \ref{prop:X F} are satisfied, and hence 
\begin{equation}\label{eq:d wt g}d(x_0,\phi_F^t(x_0))\leq d^{g|_N}(x_0,\phi_F^t(x_0))\leq C\ell(x)^2.\end{equation}
Here $d^{g|_N}$ denotes the distance function on $N$ induced by $g|_N$. We denote by $K\sub M$ the support of $\rho$, and $C':=\max_{y\in K}|d\pi(y)|_\op$. Then $\ell(x)\leq C'\ell(t,x_0)$. Combining this with (\ref{eq:d wt g}), we obtain
\[d(x_0,\phi_F^t(x_0))\leq C{C'}^2\ell(t,x_0)^2.\]
Claim \ref{claim:g} follows.
\end{proof}
We choose a constant $C$ as in Claim \ref{claim:g}. We apply Lemma \ref{le:ell f} below with $X:=X_F$, and $f$ replaced by the map $[0,\infty)\ni a\mapsto Ca\in[0,\infty)$, and we choose a constant $t_1:=\eps>0$ as in the assertion of that lemma. Let $t\leq t_1$ and $x_0\in \Fix(\phi_F^t,N)$. Then inequality (\ref{eq:d g x 0}) holds, and therefore by the conclusion of Lemma \ref{le:ell f}, $X_F(x_0)=0$. It follows that $df(x_0)d\pi(x_0)=dF(x_0)=\om(X_F(x_0),\cdot)=0$, and therefore $df(x_0)=0$, i.e.  $x_0\in \Crit f$. This proves Claim \ref{claim:t 1}.
\end{proof}
We choose a number $t_1$ as in Claim \ref{claim:t 1}. Since $F$ has compact support, there exists a number $t_2>0$ so small that $\big\Vert\iota\circ\phi_F^t\circ\iota^{-1}-\id\big\Vert_{C^1(\iota(M))}<\eps$, for every $t\in[0,t_2]$. 
\begin{claim}\label{claim:phi N} There exists $t_3>0$ such that for every $0<t\leq t_3$ the pair $(N,\phi_F^t)$ is non-degenerate. 
\end{claim}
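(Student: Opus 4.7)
The plan is to linearize $\phi_F^t$ at each critical point $x_0$ of $f$ and reduce non-degeneracy to an injectivity statement supplied by the Morse hypothesis. By Claim \ref{claim:t 1}, for $t\in(0,t_1]$ we have $\Fix(\phi_F^t,N)=\Crit f$; at such an $x_0$ the vanishing $X_F(x_0)=0$ gives $\phi_F^t(x_0)=x_0$ and $d\phi_F^t(x_0)=\exp(tA)$ with $A:=dX_F(x_0)$. I will take $t_3\leq t_1$, so that non-degeneracy of $(N,\phi_F^t)$ only has to be checked at the points of $\Crit f$. By Lemma \ref{le:F x}, that amounts to the assertion that the only $v\in T_{x_0}N$ satisfying $(\exp(tA)-\id)v\in T_{x_0}N^\om$ is $v=0$.

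Next I would compute $A$ in Darboux-type coordinates $(b,l,p)$ adapted to the normal form: using the splitting $TN=H\oplus TN^{\om|_N}$ one obtains local coordinates in which $N=\{p=0\}$, the isotropic leaves of $N$ are the slices at fixed $b$, and (with $H$ taken as the $b$-direction along $N$) $\om=\om_B+dl\wedge dp$ on $M$. Since $F=f\circ\pi$ near $N$ has no $p$-dependence, a direct computation gives
\[X_F(b,l,p)=\bigl(X^{\om_B}_{f(\cdot,l)}(b),\;0,\;-\partial_l f(b,l)\bigr).\]
Thus $\partial_p X_F\equiv 0$, so the third block-column of $A$ vanishes, and the middle component of $X_F$ is zero, so the image of $A$ has no $L$-component. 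Consequently $(\exp(tA)-\id)v$ has zero $L$-component for every $v\in T_{x_0}N$, and the condition $(\exp(tA)-\id)v\in T_{x_0}N^\om=T_{x_0}L$ collapses to the simultaneous vanishing of its $B$- and $p$-components. Equivalently, $M_tv=0$, where $M_t\Colon T_{x_0}N\to T_{x_0}B\oplus T^*_{x_0}L$ is the composition of $(1/t)\bigl(\exp(tA)-\id\bigr)$ with the projection onto the first and third summands of the splitting $T_{x_0}M=T_{x_0}B\oplus T_{x_0}L\oplus T^*_{x_0}L$.

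The crucial observation is that the limit $M_0$ sends $(v_b,v_l)$ to
\[\bigl(J_B(H_{bb}v_b+H_{bl}v_l),\;-H_{lb}v_b-H_{ll}v_l\bigr),\]
where $H_{\bullet\bullet}$ are the blocks of the Hessian of $f$ at $x_0$ and $J_B$ is the invertible operator on $T_{x_0}B$ determined by $\om_B$. Since $f$ is Morse, the full Hessian is invertible, so $\ker M_0=0$; and because $M_0$ is a linear map between vector spaces of equal dimension, it is in fact an isomorphism. By continuity in $t$, $M_t$ remains an isomorphism---hence injective---for all sufficiently small $t>0$, yielding non-degeneracy of $(N,\phi_F^t)$ at $x_0$. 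Taking $t_3$ to be the minimum of these thresholds over the finitely many points of $\Crit f$ (finite because $N$ is closed and $f$ is Morse) finishes the proof.

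The main work is a careful set-up of the Darboux-type coordinates $(b,l,p)$ compatible with $H$, so that the formula for $X_F$ and hence for $A$ falls out cleanly; once that expression is in hand, the Morse condition supplies the injectivity at leading order, and the perturbation step for small $t$ is automatic by dimension count and continuity.
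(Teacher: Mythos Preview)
Your approach is correct and arrives at the same conclusion, but by a different route. Both arguments linearize $\phi_F^t$ at each $x_0\in\Crit f$ and feed in the Morse hypothesis. The paper works coordinate-free: it identifies $\om(dX_F(x_0)\cdot,\cdot)$ with the Hessian of $F$, restricts to $T_{x_0}N$ to obtain the Hessian of $f$, decomposes $T_{x_0}N$ into positive and negative eigenspaces of the associated $g$-self-adjoint operator, and then uses the second-order remainder $T^t_{x_0}=d\phi_F^t(x_0)-\id-t\,dX_F(x_0)=O(t^2)$ to force $v=0$ by a direct inequality. You instead compute the block structure of $A=dX_F(x_0)$ in adapted coordinates $(b,l,p)$, observe that the degeneracy condition collapses to $M_tv=0$ for a family of linear maps $M_t\colon T_{x_0}N\to B\oplus P$ between spaces of equal dimension, and conclude by continuity from the invertibility of $M_0$ (which is the Morse condition in disguise). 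Your argument trades the eigenspace estimate for a cleaner open-condition argument; the paper's version is shorter and avoids any coordinate bookkeeping.

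One point to tighten: taking $H$ to be the $b$-direction along $N$ with $\om=\om_B+dl\wedge dp$ in a \emph{neighbourhood} tacitly assumes $H$ is integrable, which is not given. For general $H$ one picks up correction terms of the shape $-h\,dp\wedge db-p\,dh\wedge db$. Arranging $h(x_0)=0$ pointwise (always possible), these corrections are $O(|x-x_0|)$; since also $X_F(x_0)=0$, they contribute only at second order to $X_F$ near $x_0$, so $A$ still has the block form you claim. Alternatively, the vanishing $L$-row and $P$-column of $A$ follow directly from $\om_{x_0}(A\cdot,\cdot)=\mathrm{Hess}_{x_0}F$ together with $\partial_pF\equiv0$, which bypasses the coordinate computation of $X_F$ entirely.
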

\begin{proof}[Proof of Claim \ref{claim:phi N}] Let $x_0\in \Crit f$. Then $X_F(x_0)=0$, and hence the derivative $dX_F(x_0):T_{x_0}M\to T_{x_0}M$ is well-defined. We fix $t\in\R$. Then $\phi_F^t(x_0)=x_0$, and hence $d\phi_F^t(x_0)$ is a linear map from $T_{x_0}M$ to $T_{x_0}M$. Hence we may define 
\begin{equation}\label{eq:T x 0}T_{x_0}^t:=d\phi_F^t(x_0)-\id-tdX_F(x_0):T_{x_0}M\to T_{x_0}M.\end{equation}
We have $T_{x_0}^0=0$. Furthermore, by a calculation in local coordinates, we have $\left.\frac d{dt}\right|_{t=0}d\phi_F^t(x_0)=dX_F(x_0)$. Hence by Taylor's theorem there exists a constant $C_{x_0}$ such that $|T_{x_0}^t|_\op\leq C_{x_0}t^2,$ for every $t\in[0,t_1]$. 

A calculation in Darboux charts shows that the bilinear form $B_{x_0}:T_{x_0}M\x T_{x_0}M\ni (v,w)\mapsto \om\big(dX_F(x_0)v,w\big)\in\R$ is the Hessian of $F$. Since $F|_N=f$, it follows that the restriction $b_{x_0}:=B_{x_0}|_{T_{x_0}N\x T_{x_0}N}$ is the Hessian of $f$. We define the linear map $A:T_{x_0}N\to T_{x_0}N$ by $g_{x_0}(\cdot, A\cdot):=b_{x_0}$, and we denote by $V_+$ and $V_-$ the direct sum of the positive and negative eigenspaces of $A$, respectively. It follows that $A$ is self-adjoint with respect to $g_{x_0}$. Since by assumption $f$ is Morse, the form $b_{x_0}$ is non-degenerate, hence $A$ is an isomorphism, and therefore $T_{x_0}N=V_+\oplus V_-$. We define $c_{x_0}:=\min\{|\lam|\,|\,\lam\textrm{ eigenvalue of }A\big\}$. Since $f$ is Morse the set $\Crit f$ is isolated. Since $N$ is compact, it follows that $\Crit f$ is finite. Hence we may define 
\begin{equation}\label{eq:t 3}t_3:=\min\left(\left\{\frac{c_{x_0}}{2C_{x_0}|\om_{x_0}|}\,\Big|\,x_0\in \Crit f\right\}\cup\{t_1\}\right).\end{equation}
For $x_0\in N$ we denote by $\pr_{x_0}:T_{x_0}N\to T_{x_0}N/T_{x_0}N^\om$ the canonical projection. Let $0<t\leq t_3$, $F\sub N$ be a leaf, $x\in C^\infty([0,1],F)$ a path, and $v\in T_{x(0)}N\cap T_{x(0)}(\phi_F^t)^{-1}(N)$. Assume that $\phi_F^t(x(0))=x(1)$, and that 
\begin{equation}\label{eq:hol =}\hol^{\om,N}_x\pr_{x(0)}v=\pr_{x(1)}d\phi_F^t(x(0))v.\end{equation}
Claim \ref{claim:phi N} is a consequence of the following. 
\begin{claim}\label{claim:v=0} We have $v=0$. 
\end{claim}
\begin{pf}[Proof of Claim \ref{claim:v=0}] Since $\phi_F^t(x_0)=x(1)\in F$, we have $x_0\in\Fix(\phi_F^t,N)$. Therefore, using $t\leq t_3\leq t_1$, Claim \ref{claim:t 1} implies that $x_0\in\Crit f$, and hence $x(1)=x(0)$. Recall that $\pi_N$ denotes canonical projection from $N$ to the set of isotropic leaves $N_\om$. We abbreviate $x_0:=x(0)$. Since by assumption $N$ is regular, it follows from (\ref{eq:hol =}) and Lemma \ref{le:F x} that $d\pi_N(x_0)d\phi_F^t(x_0)v=d\pi_N(x_0)v$. By Lemma \ref{le:X/R}(\ref{le:X/R:ker}) this means that 
\begin{equation}\label{eq:d phi F t}d\phi_F^t(x_0)v-v\in \ker d\pi(x_0)=T_{x_0}N^\om.\end{equation}
We define $v_\pm\in V_\pm$ by $v_++v_-:=v$. Since $A$ is $g_{x_0}$-self-adjoint, eigenvectors of $A$ for distinct eigenvalues are $g_{x_0}$-orthogonal to each other. It follows that $b_{x_0}(v_-,v_+)=0$, and therefore, 
\begin{equation}\label{eq:big om}c_{x_0}|v_\pm|^2\leq\big|b_{x_0}(v,v_\pm)\big|=\big|\om\big(dX_F(x_0)v,v_\pm\big)\big|.\end{equation}
By (\ref{eq:T x 0}) and (\ref{eq:d phi F t}) we have
\begin{equation}\label{eq:0}t\om\big(dX_F(x_0)v,v_\pm\big)+\om(T_{x_0}^tv,v_\pm)=\om\big(d\phi_F^t(x_0)v-v,v_\pm\big)=0.\end{equation}
Furthermore, we may estimate
\begin{equation}\label{eq:big om T}\big|\om(T_{x_0}^tv,v_\pm)\big|\leq |\om_{x_0}||T_{x_0}^t|_\op|v||v_\pm|.\end{equation}
Consider the case $|v_+|\geq|v_-|$. Then $|v|\leq\sqrt2|v_+|$, since $V_+$ and $V_-$ are orthogonal with respect to $g_{x_0}$. Hence (\ref{eq:big om},\ref{eq:0},\ref{eq:big om T}) imply that $c_{x_0}|v_+|^2t\leq\sqrt 2|\om_{x_0}||T_{x_0}^t|_\op|v_+|^2$. Combining this with the inequalities $|T_{x_0}^t|_\op\leq C_{x_0}t^2$ and $t\leq t_3$, and (\ref{eq:t 3}), we obtain $c_{x_0}|v_+|^2t\leq(1/\sqrt2)c_{x_0}|v_+|^2t$. Since $c_{x_0},t>0$, it follows that $0=|v_+|\geq|v_-|$, and therefore $v=v_++v_-=0$. The case $|v_-|\geq|v_+|$ is treated in an analogous way. This proves Claim \ref{claim:v=0} and completes the proof of Claim \ref{claim:phi N}.
\end{pf}\end{proof}
We choose a number $t_3$ as in Claim \ref{claim:phi N}, and define $\phi:=\phi_F^{\min\{t_1,t_2,t_3\}}$. This map has the required properties. This proves Theorem \ref{thm:f} in the case in which $M$ and $N$ are an open neighborhood and the zero-section of $(T{M'}^{\om'})^*$, for some closed presymplectic manifold $(M',\om')$, and $\om$ is of the form $\Om_{\om',H}$. 

Consider now the general case. Let $M,\om,N,f,\iota$ and $\eps$ be as in the hypothesis of Theorem \ref{thm:f}. We denote by $\wt N\sub (TN^\om)^*$ the zero section. We choose an $\om$-horizontal distribution $H\sub TN$, and define $\wt\om:=\Om_{\om|_N,H}$. By a theorem by C.-M.~Marle there exist compact neighborhoods $K\sub M$ of $N$ and $\wt K\sub (TN^\om)^*$ of $\wt N$, and a diffeomorphism $\psi:K\to \wt K$ such that $\psi(N)=\wt N$ and $\psi^*\wt\om=\om$. (See 4.5. Th\'eor\`eme on p.~79 in \cite{Ma}.) We define 
\[\wt f:=f\circ\psi|_N^{-1}:\wt N\to \R,\quad \wt\iota:=\iota\circ\psi^{-1}:\wt K\to \R^{2n}.\]
We denote by $\Int\wt K$ the interior of $\wt K$. We proved that there exists $\wt\phi\in \Ham_\c(\Int\wt K,\wt\om)$ such that $\Fix(\wt\phi,\wt N)=\Crit \wt f$, $\big(\wt N,\wt\phi,\wt\om\big)$ is non-degenerate, and $\big\Vert \wt\iota\circ\wt\phi\circ\wt\iota^{-1}-\id\Vert_{C^1(\wt\iota(\Int\wt K))}<\eps$. We define $\phi:M\to M$ to be the extension of $\psi^{-1}\circ\wt\phi\circ\psi:\Int K\to\Int K$ by the identity. It follows that $\phi\in\Ham_\c(M,\om)$,  
\[\Fix(\phi,N)=\psi^{-1}(\Fix(\wt\phi,\wt N))=\Crit (\wt f\circ\psi|_N)=\Crit f, \]
and $(N,\phi,\om)$ is non-degenerate. Furthermore, $\iota\circ\phi\circ\iota^{-1}=\wt\iota\circ\wt\phi\circ\wt\iota^{-1}$ on $\iota(K)=\wt\iota(\wt K)\sub\R^{4n}$, and therefore, 
\[\big\Vert\iota\circ\phi\circ\iota^{-1}-\id\big\Vert_{C^1(\iota(M))}=\big\Vert\wt\iota\circ\wt\phi\circ\wt\iota^{-1}-\id\big\Vert_{C^1(\wt \iota(\wt K))}<\eps.\]
Hence $\phi$ satisfies the required properties. This proves Theorem \ref{thm:f} in the general case.
\end{proof}
\begin{proof}[Proof of Proposition \ref{prop:Stiefel}] \setcounter{claim}{0} Note that the isotropic leaf through a point $\Th\in V(k,n)$ is the orbit $\U(k)\cdot\Th$ of the action of $U(k)$ on $V(k,n)$ by multiplication from the left. 
\begin{claim}\label{claim:A leq pi} We have $A\big(\C^{k\x n},\om_0,V(k,n)\big)\leq\pi$.
\end{claim}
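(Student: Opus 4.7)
The plan is to exhibit an explicit smooth disk $u\Colon \D\to\C^{k\x n}$ whose boundary lies in a single $\U(k)$-orbit in $V(k,n)$ and whose symplectic area is exactly $\pi$. Since the isotropic leaves of $V(k,n)$ are the orbits of the $\U(k)$-action by multiplication from the left, producing such a $u$ immediately gives $\pi\in S\big(\C^{k\x n},\om_0,V(k,n)\big)\cap(0,\infty)$, hence $A\big(\C^{k\x n},\om_0,V(k,n)\big)\leq\pi$ by the definition (\ref{eq:A M om N}).

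Concretely, I would define $u\Colon\D\to\C^{k\x n}$ by letting $u(z)$ be the $k\x n$ matrix whose $(1,1)$-entry equals $z$, whose $(i,i)$-entry for $2\leq i\leq k$ equals $1$, and all of whose remaining entries vanish. For $z\in S^1$ the rows of $u(z)$ are clearly orthonormal, so $u(S^1)\sub V(k,n)$. Moreover
\[u(z)=\diag(z,1,\ldots,1)\,u(1),\qquad z\in S^1,\]
and $\diag(z,1,\ldots,1)\in\U(k)$, so $u(S^1)$ is contained in the single $\U(k)$-orbit through $u(1)$, i.e. in one isotropic leaf of $V(k,n)$.

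To compute the area, note that only the $(1,1)$-entry of $u$ is non-constant, so under the identification of $\C^{k\x n}$ with the direct sum of its entries and with the standard form $\om_0=\sum_{i,j}\frac{i}{2}dz_{ij}\wedge d\bar z_{ij}$, the pullback reduces to $u^*\om_0=\frac{i}{2}dz\wedge d\bar z$. Thus $\int_\D u^*\om_0=\pi$, which is the desired bound.

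There is essentially no obstacle here, since the construction is explicit and the computation is elementary; one only needs to be careful about the two conventions at play, namely that the standard symplectic form on $\C$ integrates to $\pi$ over the unit disk, and that the isotropic leaves of the coisotropic submanifold $V(k,n)=\mu^{-1}(0)$ for the $\U(k)$-moment map really are the $\U(k)$-orbits (which follows from the general fact, mentioned after Theorem \ref{thm:leaf}, that for a free Hamiltonian action with zero-level set $N$ the leaves of the isotropic foliation of $N$ coincide with the group orbits).
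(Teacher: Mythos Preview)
Your proposal is correct and is essentially identical to the paper's own proof: the paper defines the same map $u\Colon\D\to\C^{k\times n}$ (with $(1,1)$-entry $z$, remaining diagonal entries $1$, and off-diagonal entries $0$), observes that $u(S^1)\sub\U(k)\cdot u(1)$, and computes $\int_\D u^*\om_0=\pi$.
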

\begin{proof}[Proof of Claim \ref{claim:A leq pi}] Consider the map $u:\D\to \C^{k\x n}$ defined by $u^1_{\phantom{1}1}(z):=z$, for $z\in\D$, $u^i_{\phantom{i}i}\const1$, for $i=2,\ldots,k$, and $u^i_{\phantom{i}j}\const0$, for $i=1,\ldots,k$, $j=1,\ldots,n$ such that $i\neq j$. Then $u(S^1)$ is contained in the leaf $\U(k)\cdot u(1)$. Furthermore, $\int_\D u^*\om_0=\pi$. Hence $\pi\in S\big(\C^{k\x n},\om_0,V(k,n)\big)$ (defined as in (\ref{eq:S})). Claim \ref{claim:A leq pi} follows from this. 
\end{proof}
\begin{claim}\label{claim:A geq pi}We have $A\big(\C^{k\x n},\om_0,V(k,n)\big)\geq\pi$.
\end{claim}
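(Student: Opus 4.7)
The plan is to prove the stronger statement $S(\C^{k\x n},\om_0,V(k,n))\sub\pi\Z$; combined with Claim \ref{claim:A leq pi}, this forces $A(\C^{k\x n},\om_0,V(k,n))=\pi$ and gives the proposition. First I would exploit exactness: since $\C^{k\x n}$ is a symplectic vector space, $\om_0=d\lam$ for the Liouville one-form $\lam_\Th(V)=\tfrac12\operatorname{Im}\operatorname{tr}(\Th^*V)$. By Stokes, $\int_\D u^*\om_0=\int_{S^1}u^*\lam$, which depends only on the boundary loop $\ga:=u|_{S^1}$, and $\ga$ is contained in some isotropic leaf of $V(k,n)$.

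Next I would use that the isotropic leaves of $V(k,n)$ are precisely the $\U(k)$-orbits (as recorded in the Hamiltonian action example at the start of the Examples subsection) and that this action is free on $V(k,n)$. Fixing any $\Th_0$ in the leaf containing $\ga$, freeness provides a unique smooth $g\Colon[0,2\pi]\to\U(k)$ with $\ga(\th)=g(\th)\Th_0$; the identity $g(2\pi)\Th_0=\ga(2\pi)=\ga(0)=g(0)\Th_0$ together with freeness forces $g(2\pi)=g(0)$, so $g$ descends to a genuine loop $S^1\to\U(k)$. Using $\Th_0\Th_0^*=\one_k$ (the defining equation of $V(k,n)$) and $g^*=g^{-1}$, a short computation yields
\[\operatorname{tr}(\Th^*\dot\Th)=\operatorname{tr}(\Th_0^*g^{-1}\dot g\Th_0)=\operatorname{tr}(g^{-1}\dot g\,\Th_0\Th_0^*)=\operatorname{tr}(g^{-1}\dot g).\]

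Finally, since $g^{-1}\dot g\in\mathfrak{u}(k)$ is skew-Hermitian, $\operatorname{tr}(g^{-1}\dot g)$ is purely imaginary, so $\operatorname{Im}\operatorname{tr}(g^{-1}\dot g)=-i\operatorname{tr}(g^{-1}\dot g)$. Combining with $\operatorname{tr}(g^{-1}\dot g)=\frac{d}{d\th}\log\det g$ and the fact that $\det g\Colon S^1\to\U(1)$ has integer degree, I arrive at
\[\int_\D u^*\om_0=-\tfrac{i}{2}\int_0^{2\pi}\operatorname{tr}(g^{-1}\dot g)\,d\th=\pi\deg(\det g)\in\pi\Z,\]
as required. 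Independence of the basepoint $\Th_0$ is automatic: replacing $\Th_0$ by $g_0\Th_0$ replaces $g$ by $gg_0^{-1}$, which only rescales $\det g$ by $(\det g_0)^{-1}$ and hence preserves the degree.

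The main (and rather mild) obstacle is ensuring that the boundary lift is a genuine loop in $\U(k)$, which is exactly what freeness of the $\U(k)$-action on $V(k,n)$ buys. Everything else is a matrix-trace manipulation relying only on the defining identity $\Th\Th^*=\one_k$ of the Stiefel manifold; no pseudo-holomorphic or deep topological input is needed.
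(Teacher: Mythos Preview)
Your argument is correct and reaches the same conclusion as the paper---namely $\int_\D u^*\om_0=\pi\cdot\deg(\det g)\in\pi\Z$---but by a genuinely different and more elementary route. The paper does not use the Liouville primitive and Stokes directly; instead it first writes the boundary loop as $g_0(z)u(1)$, sets $d:=\deg(\det g_0)$, and corrects $g_0$ by $\diag(z^d,1,\ldots,1)$ to obtain a loop $h_0$ in $\U(k)$ of determinant-degree zero. Since $\det$ induces an isomorphism $\pi_1(\U(k))\to\Z$, this $h_0$ extends to $h:\D\to\U(k)$, and a moment-map identity $(hu)^*\om_0=u^*\om_0-d\lan\mu\circ u,h^{-1}dh\ran$ together with Stokes shows $\int_\D u^*\om_0=\int_\D(hu)^*\om_0$. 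On $S^1$ the map $hu$ agrees with the explicit disk $v(z)=\diag(z^d,1,\ldots,1)u(1)$, whose area is $d\pi$.

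Your approach bypasses both the homotopy-extension step and the moment-map identity: the trace manipulation using $\Th_0\Th_0^*=\one_k$ and Jacobi's formula $\operatorname{tr}(g^{-1}\dot g)=\frac{d}{d\th}\log\det g$ is entirely self-contained. (Incidentally, $g$ is even given explicitly by $g(\th)=\ga(\th)\Th_0^*$, which makes its smoothness immediate.) The paper's route is more conceptual in that it isolates the invariance of the disk integral under multiplication by null-homotopic loops in the symmetry group---a fact that would transfer to other free Hamiltonian actions---whereas yours exploits the specific linear structure of $\C^{k\x n}$ and is the quicker path here.
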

\begin{proof}[Proof of Claim \ref{claim:A geq pi}] Let $u\in C^\infty(\D,\C^{k\x n})$, and assume that $\int_\D u^*\om_0>0$, and that there exists an isotropic leaf $F\sub V(k,n)$ such that $u(z)\in F$, for every $z\in S^1$. Since the action of $U(k)$ on $V(k,n)$ is free, there exists a unique map $g_0:S^1\to U(k)$ such that $u(z)=g_0(z)u(1)$, for $z\in S^1$. This map is smooth. We define $d$ to be the degree of the map $\det\circ g_0:S^1\to S^1$. Claim \ref{claim:A geq pi} is a consequence of the following.
\begin{claim}\label{claim:int D u} We have $\int_\D u^*\om_0=d\pi$. 
\end{claim}
\begin{pf}[Proof of Claim \ref{claim:int D u}] We define 
\[h_0:S^1\to U(k),\quad h_0(z):=\diag(z^d,1,\ldots,1)g_0(z)^{-1}.\]
Here $\diag(a_1,\ldots,a_k)$ means the diagonal $k\x k$-matrix with diagonal entries $a_1,\ldots,a_k$. The map $\det\circ h_0:S^1\to S^1$ has degree 0. Since the determinant induces an isomorphism of the fundamental groups of $U(k)$ and $S^1$, it follows that there exists a continuous homotopy from the constant map $\one$ to $h_0$. This gives rise to a continuous map $h:\D\to U(k)$ such that $h|_{S^1}=h_0$. We may assume \Wlog that $h$ is smooth. Let $\mu:\C^{k\x n}\to\Lie\U(k)$ be a moment map for the action of $U(k)$ on $V(k,n)$. By a straight-forward calculation, we have
\[(hu)^*\om_0=u^*\om_0-d\big\lan\mu\circ u,h^{-1}dh\big\ran,\]
see Lemma 9 in \cite{Zi2}. By Stokes' theorem, it follows that 
\begin{equation}\label{eq:int u om int}\int u^*\om_0=\int_\D (hu)^*\om_0.\end{equation} 
We define the map $v:\D\to\C^{k\x n}$ by $v(z):=\diag(z^d,1,\ldots,1)u(1)$, for $z\in\D$. Then for $z\in S^1$, we have $(hu)(z)=(h_0u)(z)=v(z)$. Therefore, 
\[\int_\D (hu)^*\om_0=\int_\D v^*\om_0=d\pi\sum_{j=1,\ldots,n}\big|u^1_{\phantom{1}j}(1)\big|^2=d\pi.\]
Here in the last equality we used that the first row of $u(1)$ has norm 1. This proves Claim \ref{claim:int D u} and hence Claim \ref{claim:A geq pi}.
\end{pf}\end{proof}
Claims \ref{claim:A leq pi} and \ref{claim:A geq pi} imply that $A\big(\C^{k\x n},\om_0,V(k,n)\big)=\pi$. This proves the first assertion. To prove the second assertion, let $C>\pi$. Then there exists $\wt H\in C^\infty_\c(\C,\R)$ such that $\phi_{\wt H}^1(\D)\cap\D=\empty$, and $\Vert\wt H\Vert<C$. (See for example the proof of Proposition 1.4. in \cite{Ho}.) We define $\pi:\C^{k\x n}\to \C$ by $\pi(\Th):=\Th^1_{\phantom{1}1}$, and choose $\rho\in C^\infty(\C^{k\x n},[0,1])$ with compact support, such that $\rho=1$ on $\bigcup_{t\in[0,1]}\phi_{\wt H\circ \pi}^t(V(k,n))$. We define $H:=\rho\cdot(\wt H\circ \pi):\C^{k\x n}\to\R$ and $\phi:=\phi_H^1$. Then $\phi\in\Ham_\c(\C^{k\x n},\om_0)$, and $d_\c(\phi,\id)\leq\Vert H\Vert=\Vert\wt H\Vert<C$. Furthermore, $\pi(V(k,n))=\D$ and $\pi\circ\phi_{\wt H\circ\pi}^1=\phi_{\wt H}^1\circ\pi$. It follows that 
\[\pi(V(k,n))\cap\pi\circ\phi_{\wt H\circ\pi}^1(V(k,n))\sub\D\cap\phi_{\wt H}^1(\D)=\empty.\] 
Hence $V(k,n)\cap \phi_{\wt H\circ\pi}^1(V(k,n))=\empty$. Our choice of $\rho$ implies that $\phi_{\wt H\circ\pi}^1=\phi_H^1$ on $V(k,n)$. It follows that $V(k,n)\cap\phi_H^1(V(k,n))=\empty$. Therefore, $\phi$ has the required properties. This proves Proposition \ref{prop:Stiefel}.
\end{proof}
\begin{proof}[Proof of Proposition \ref{prop:Arnold}]\setcounter{claim}{0} Let $M_i,\om_i,L,M,\om,N$ and $\psi$ be as in the hypothesis of the Conjecture. Without loss of generality we may assume that $M_1$ and $L$ are connected. We define
\begin{eqnarray*}&\wt M:=M_1\x M_2\x M_1,\quad \wt\om:=\om_1\oplus\om_2\oplus(-\om_1),&\\
&\wt N:=\big\{(x_1,x_2,x_1)\in M\,\big|\,x_1\in M_1,\,x_2\in L\big\},\quad \wt\phi:=\phi\x\id_{M_1}.&\end{eqnarray*}
Then $\wt N$ is a Lagrangian submanifold of $\wt M$. Since $M_1$ and $L$ are closed manifolds by assumption, it follows that $\wt N$ is closed. The map 
\[\wt \psi\Colon \wt M\to \wt M,\quad \wt\psi(x_1,x_2,y):=(y,\psi(x_2),x_1).\]
is an $\wt\om$-anti-symplectic involution whose fixed point set equals $\wt N$. Furthermore, $N=M_1\x L\sub M=M_1\x M_2$ is a regular $\om$-coisotropic submanifold, and the symplectic quotient $\big(N_\om,\A_{N,\om|_N},\om_N\big)$ of $(N,\om|_N)$ is isomorphic to $(M_1,\om_1)$ via the map $M_1\ni y\mapsto \{y\}\x L\in N_\om$. Via this map, the definitions of $\wt M,\wt\om$ and $\wt \phi$ agree with (\ref{eq:wt M wt om},\ref{eq:iota N wt N},\ref{eq:wt phi}). Hence by non-degeneracy of $(N,\phi)$ and Lemma \ref{le:N}(\ref{le:N:non-deg}), we have $\wt N\pitchfork \wt\phi(\wt N)$. Thus $\wt M,\wt\om$ and $\wt N$ satisfy the hypotheses of the Lagrangian AGC. Supposing that this conjecture is true, it follows that 
\begin{equation}\label{eq:wt N cap wt phi}\big|\wt N\cap\wt\phi(\wt N)\big|\geq \sum_ib_i(\wt N,\Z_2).\end{equation}
The manifolds $N$ and $\wt N$ are diffeomorphic, and therefore their Betti sums agree. Furthermore, Lemma \ref{le:N}(\ref{le:N:phi leaf}) implies that $\big|\Fix(\phi,N)\big|=\big|\wt N\cap\wt\phi(\wt N)\big|$. Combining this with (\ref{eq:wt N cap wt phi}), we obtain inequality (\ref{eq:Fix phi}). This proves Proposition \ref{prop:Arnold}.
\end{proof}
\begin{proof}[Proof of Corollary \ref{cor:presympl}]\setcounter{claim}{0} Let $(M,\om)$ be a bounded and aspherical symplectic manifold, and $(M',\om')$ a closed and regular presymplectic manifold of corank $\dim M-\dim M'$. Assume that $M'$ has a simply-connected isotropic leaf $F_0$, and that there exists an embedding $\psi\Colon M'\to M$ satisfying $\psi^*\om=\om'$. Let $\phi\in\Ham(M,\om)$. It suffices to prove that $N:=\psi(M')$ intersects $\phi(N)$. Replacing $M'$ by the connected component of $M'$ containing $F_0$, we may assume without loss of generality that $M'$ is connected. It follows from Proposition \ref{prop:M om} below that the submanifold $\psi(M')\sub M$ is coisotropic. We choose a constant $C>0$ as in Theorem \ref{thm:leaf}. If $(N,\phi)$ is degenerate, then by definition $\Fix(\phi,N)\neq\emptyset$, and hence $N\cap\phi(N)\neq\emptyset$. So assume that $(N,\phi)$ is non-degenerate. 
\begin{claim}\label{claim:A M om N} We have $A(M,\om,N)=\infty$. 
\end{claim}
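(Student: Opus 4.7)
The plan is to show that $S(M,\om,N)\sub\{0\}$, which by (\ref{eq:A M om N}) immediately forces $A(M,\om,N)=\infty$. So I fix a smooth $u\Colon \D\to M$ whose boundary $u(S^1)$ lies in some isotropic leaf $F$ of $N$, and aim to prove $\int_\D u^*\om=0$.

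The first step I would carry out is to upgrade the hypothesis ``$M'$ has some simply-connected isotropic leaf $F_0$'' to ``every isotropic leaf of $N$ is simply connected''. The reduction at the beginning of the proof of Corollary \ref{cor:presympl} lets me assume $M'$ is connected. Because $M'$ is closed and regular, the Ehresmann result cited in Section \ref{sec:back} makes $\pi_{M'}\Colon M'\to M'_{\om'}$ a smooth fiber bundle over a connected base, so all of its fibers are diffeomorphic to $F_0$ and hence simply connected. Since $\psi^*\om=\om'$, the embedding $\psi$ carries isotropic leaves of $M'$ diffeomorphically onto those of $N$, and so every isotropic leaf of $N$ is simply connected.

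Next I would cap off the loop. Simple connectedness of $F$ yields a smooth map $u'\Colon \D\to F$ with $u'|_{S^1}=u|_{S^1}$; gluing $u$ and $u'$ along their common boundary with opposite orientations produces a smooth sphere $v\Colon S^2\to M$ satisfying $\int_{S^2}v^*\om=\int_\D u^*\om-\int_\D {u'}^*\om$. Since $F$ is a leaf of the isotropic foliation of $N$, its tangent space at any point equals $(T_xN)^\om$, on which $\om$ vanishes; hence ${u'}^*\om\equiv 0$. Asphericity of $(M,\om)$ then gives $\int_{S^2}v^*\om=0$, so $\int_\D u^*\om=0$, as required.

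The step I expect to be the main obstacle is the first one: propagating simple connectedness from a single leaf to every leaf. This is exactly where closedness of $M'$ is doing the work, by making Ehresmann's theorem available (without it, the projection need not be proper and fibers need not be mutually diffeomorphic). Once this has been secured, the capping-off construction and the final appeal to asphericity are routine.
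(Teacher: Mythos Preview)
Your proposal is correct and follows essentially the same route as the paper: use Ehresmann's theorem (via closedness and regularity) to see that all isotropic leaves are diffeomorphic to the given simply-connected one, cap off $u|_{S^1}$ by a disk in the leaf $F$, glue to a sphere, and invoke asphericity together with the fact that $\om$ pulls back to zero on any isotropic leaf. The only point the paper handles more explicitly is the smoothing of the glued sphere (it precomposes both disks with a reparametrization $f$ built from a function $\rho$ that is flat at the boundary), which you gloss over; this is a routine technicality, not a gap in your argument.
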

\begin{proof}[Proof of Claim \ref{claim:A M om N}] Let $u\in C^\infty(\D,M)$ be a smooth map such that there exists $F\in N_\om$ satisfying $u(S^1)\sub F$. Since $N$ is closed and the canonical projection $\pi_N\Colon N\to N_\om$ is a submersion, by the proposition on p. 31 in \cite{Eh} it is a locally trivial fiber bundle. Since $N$ is connected, it follows that $F$ is diffeomorphic to $F_0$, and therefore simply connected. Hence there exists a smooth map $v\Colon \D\to F$ such that $u$ and $v$ agree on the boundary $S^1$. We choose a map $\rho\in C^\infty([0,1],[0,1])$ such that $\rho(r)=r$ for $r\leq1/2$, $\rho(1)=1$, $\rho'(r)>0$, for every $r\in(0,1)$, and all derivatives of $\rho$ vanish at $r=1$. We define $f\Colon \D\to \D$ by $f(rz):=\rho(r)z$, for $r\in[0,1]$ and $z\in S^1$. We denote by $\BAR{\D}$ the disk with the reversed orientation, and by $w:=(u\circ f)\# (v\circ f)\Colon S^2\iso\D\#\BAR{\D}\to M$ the concatenation of the maps $u\circ f$ and $v\circ f$. This map is smooth, and since $(M,\om)$ is symplectically aspherical, we have
\begin{equation}\label{eq:0 int S 2}0=\int_{S^2}w^*\om=\int_{B_1}(u\circ f)^*\om-\int_\D(v\circ f)^*\om.\end{equation}
Since $v$ takes values in the isotropic leaf $F$, we have $(v\circ f)^*\om=0$. Thus (\ref{eq:0 int S 2}) implies that $\int_\D u^*\om=\int_{B_1}(u\circ f)^*\om=0$. Claim \ref{claim:A M om N} follows from this.
\end{proof}
Claim \ref{claim:A M om N} implies that $C=\infty$, and hence inequality (\ref{eq:d phi id M}) holds. Therefore, the conditions of Theorem \ref{thm:leaf} are satisfied. Applying this theorem, inequality (\ref{eq:Fix phi}) follows, and therefore $N\cap\phi(N)\neq\emptyset$. This proves Corollary \ref{cor:presympl}.
\end{proof}
\appendix

\section{Auxiliary results}\label{sec:aux}
\subsection{(Pre-)symplectic geometry}\label{subsec:sympl geo}
The following result was used in Section \ref{sec:main} and in the proof of Corollary \ref{cor:presympl}. It will also be needed for the proof of Proposition \ref{prop:N} below.
\begin{prop}\label{prop:M om} Let $(M,\om)$ be a presymplectic manifold, $M'$ a manifold, $\psi\Colon M'\to M$ an immersion, and $\om':=\psi^*\om$. If $\om'$ has constant corank then the inequality 
\begin{equation}\label{eq:dim M' corank om'}\dim M'+\corank\om'\leq\dim M+\corank\om
\end{equation}
holds. Suppose now that $\psi$ is an embedding. Then $\psi(M')\sub M$ is coisotropic if and only if $\om'$ has constant corank and equality in (\ref{eq:dim M' corank om'}) holds. 
\end{prop}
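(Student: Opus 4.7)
My plan is to reduce the whole statement to pointwise linear algebra in $T_y M$ for $y=\psi(x)$. Set $V:=d\psi(x)(T_xM')\sub T_yM$, which has dimension $\dim M'$ since $\psi$ is an immersion. The first observation is that
\[\ker\om'_x=\big\{v\in T_xM'\,\big|\,d\psi(x)v\in V^{\om_y}\big\},\]
and since $d\psi(x)$ is injective, $\corank\om'_x=\dim(V\cap V^{\om_y})$. Thus the constant-corank hypothesis on $\om'$ just means this intersection has constant dimension as $x$ varies, and I may work at a single point.

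The second ingredient is the standard identity
\[\dim V+\dim V^{\om_y}=\dim T_yM+\dim\big(V\cap T_yM^{\om_y}\big),\]
which I would prove by computing the kernel and image of the restriction map $T_yM\to V^*$, $w\mapsto \om_y(w,\cdot)|_V$: the kernel is $V^{\om_y}$ by definition, and the image is the annihilator of $V\cap T_yM^{\om_y}$. Rewriting gives $\dim V^{\om_y}=\dim M-\dim M'+\dim(V\cap T_yM^{\om_y})$. Combining with $\dim(V\cap V^{\om_y})\le\dim V^{\om_y}$ and $\dim(V\cap T_yM^{\om_y})\le\dim T_yM^{\om_y}=\corank\om_y$ yields the pointwise inequality
\[\dim M'+\corank\om'_x\le\dim M+\corank\om_y,\]
which is exactly (\ref{eq:dim M' corank om'}). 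This proves the first assertion.

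For the second assertion, I assume $\psi$ is an embedding, so I may identify $V$ with $T_{\psi(x)}\psi(M')$; then $\psi(M')$ is coisotropic at $y$ iff $V^{\om_y}\sub V$. Suppose first that $\psi(M')$ is coisotropic. Then $V^{\om_y}\sub V$ gives $V\cap V^{\om_y}=V^{\om_y}$, and also $T_yM^{\om_y}\sub V^{\om_y}\sub V$, so $V\cap T_yM^{\om_y}=T_yM^{\om_y}$. Plugging both into the two displayed formulas turns every inequality into an equality, so $\corank\om'$ is constant (equal to $\dim M-\dim M'+\corank\om$) and equality holds in (\ref{eq:dim M' corank om'}). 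Conversely, if $\om'$ has constant corank and equality holds in (\ref{eq:dim M' corank om'}), then both inequalities in the derivation must be equalities; the first, $\dim(V\cap V^{\om_y})=\dim V^{\om_y}$, forces $V^{\om_y}\sub V$, i.e.\ coisotropy at every point, completing the equivalence.

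The only mildly delicate step is keeping the direction of the $\om$-complements straight and making sure the two equalities forced in the converse direction really give coisotropy (and not some weaker condition); the rest is bookkeeping.
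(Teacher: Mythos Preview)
Your proof is correct and follows the same overall strategy as the paper: reduce to pointwise linear algebra by setting $V:=d\psi(x)(T_xM')\sub T_yM$ and analyzing $V$, $V^{\om_y}$, and $T_yM^{\om_y}$. The paper packages this into two lemmas, first proving $\dim W+\dim W^{\om}\leq\dim V+\dim V^{\om}$ for any subspace $W$ of a presymplectic vector space $(V,\om)$ (with equality when $V^{\om}\sub W$), and then deducing the general inequality $\dim V'+\corank\om'\leq\dim V+\corank\om+\dim\ker\Psi+\dim\ker\Psi|_{{V'}^{\om'}}$ for a linear map $\Psi\Colon V'\to V$ with $\Psi^*\om=\om'$. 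Your route is a bit more direct: you prove the sharper \emph{identity} $\dim V+\dim V^{\om_y}=\dim T_yM+\dim(V\cap T_yM^{\om_y})$ via the rank--nullity argument for $w\mapsto\om_y(w,\cdot)|_V$, and this makes the converse direction cleaner, since equality in the chain immediately forces $V\cap V^{\om_y}=V^{\om_y}$. The paper's intermediate lemma is slightly more general (it allows $\Psi$ with nontrivial kernel), but that generality is not used here because $d\psi(x)$ is injective.
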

For the proof of this proposition we need the following lemma. 
\begin{lemma}\label{le:V om V' om'} Let $(V,\om)$ and $(V',\om')$ be presymplectic vector spaces (possibly $\infty$-dimensional), and $\Psi\Colon V'\to V$ a linear map such that $\Psi^*\om=\om'$. Then
\begin{equation}\label{eq:dim V' corank om'}\dim V'+\corank\om'\leq \dim V+\corank\om+\dim \ker \Psi+\dim\ker\Psi|_{{V'}^{\om'}}.\end{equation}
Furthermore, if $\dim V,\dim V'<\infty$ then $\Psi V'\sub V$ is coisotropic if and only if equality in (\ref{eq:dim V' corank om'}) holds.
\end{lemma}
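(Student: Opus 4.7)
Write $W := \Psi V'$. The key translation of the hypothesis $\Psi^*\om = \om'$ is that a vector $v' \in V'$ lies in ${V'}^{\om'}$ exactly when $\om(\Psi v', \Psi w') = 0$ for every $w' \in V'$, i.e. exactly when $\Psi v' \in W^\om$; since $\Psi v' \in W$ automatically, this gives
\[\Psi({V'}^{\om'}) = W \cap W^\om.\]
Applying rank-nullity to $\Psi$ and to $\Psi|_{{V'}^{\om'}}$ rewrites $\dim V' + \corank \om'$ as $\dim W + \dim(W \cap W^\om) + \dim \ker \Psi + \dim \ker \Psi|_{{V'}^{\om'}}$, so inequality (\ref{eq:dim V' corank om'}) reduces to the subspace inequality
\[\dim W + \dim(W \cap W^\om) \leq \dim V + \corank \om. \qquad(\ast)\]

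The main ingredient for $(\ast)$ is the presymplectic dimension formula
\[\dim W + \dim W^\om = \dim V + \dim(W \cap V^\om), \qquad (\dagger)\]
which I would prove by passing to the symplectic quotient $V_0 := V/V^\om$ with its induced non-degenerate form $\om_0$: writing $\bar W \sub V_0$ for the image of $W$, the standard identity $\dim \bar W + \dim \bar W^{\om_0} = \dim V_0$ lifts to $(\dagger)$ using $\dim \bar W = \dim W - \dim(W \cap V^\om)$ and $W^\om/V^\om = \bar W^{\om_0}$. Since $W \cap W^\om \sub W^\om$ and $W \cap V^\om \sub V^\om$, combining with $(\dagger)$ gives
\[\dim W + \dim(W \cap W^\om) \leq \dim W + \dim W^\om = \dim V + \dim(W \cap V^\om) \leq \dim V + \corank \om,\]
which is $(\ast)$. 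The infinite-dimensional version of (\ref{eq:dim V' corank om'}) is then trivial: if the right-hand side is infinite the inequality is vacuous, and otherwise rank-nullity forces $W$ and $V^\om$ to be finite-dimensional, reducing matters to the argument just given.

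For the finite-dimensional equivalence, I would trace when equality holds in the chain just displayed. The first inequality is an equality iff $W \cap W^\om = W^\om$, i.e. iff $W^\om \sub W$ (the coisotropic condition); the second is an equality iff $V^\om \sub W$, which is automatic once $W$ is coisotropic, since then $V^\om \sub W^\om \sub W$. Conversely, equality in $(\ast)$ forces both, so in particular $W$ is coisotropic. Together with the rank-nullity reformulation above, this gives the ``if and only if'' statement.

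The main point that needs care is the identification $W^\om/V^\om = \bar W^{\om_0}$ underlying the presymplectic dimension formula $(\dagger)$; this is elementary but easy to get backwards because several overlapping notions of ``$\om$-orthogonal complement'' appear in parallel. Once $(\dagger)$ is in hand, the rest of the argument is routine dimension bookkeeping.
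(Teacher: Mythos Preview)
Your proof is correct. Both you and the paper reduce to a statement about the subspace $W = \Psi V'$ and then invoke a dimension lemma, but the details differ in two places worth noting. First, you observe the exact identity $\Psi({V'}^{\om'}) = W \cap W^\om$, whereas the paper records only the inclusion $\Psi({V'}^{\om'}) \sub W^\om$; your sharper version makes the rank--nullity rewriting an equality from the outset, so all the slack in the main inequality is concentrated in $(\ast)$. Second, your key ingredient is the exact formula $(\dagger)$, proved by passing to the symplectic quotient $V/V^\om$, while the paper's Lemma~\ref{le:dim W om} is the inequality $\dim W + \dim W^\om \leq \dim V + \dim V^\om$, proved via the map $\om^\#\colon V \to V^*$, together with a separate equality clause when $V^\om \sub W$. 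Your route makes the ``if and only if'' half cleaner, since you only need to check when two transparent inclusions $W \cap W^\om \sub W^\om$ and $W \cap V^\om \sub V^\om$ are equalities; the paper instead proves two auxiliary claims (Claims~\ref{claim:coiso Psi V'} and~\ref{claim:Psi V' dim V'}) characterizing equality in the inclusion and in the dimension estimate separately. The paper's dual-space argument has the minor advantage of working uniformly in infinite dimensions, whereas you handle that case by the ``RHS finite'' reduction; both are fine.
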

The proof of this lemma is based on the following. 
\begin{lemma}\label{le:dim W om} Let $(V,\om)$ be a presymplectic vector space, and $W\sub V$ a subspace. Then
\begin{equation}\label{eq:dim W om}\dim W+\dim W^\om\leq\dim V+\dim V^\om.
\end{equation}
Furthermore, if $\dim V<\infty$ and $V^\om\sub W$ then equality in (\ref{eq:dim W om}) holds.
\end{lemma}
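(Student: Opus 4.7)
\begin{pf}[Proof plan for Lemma \ref{le:dim W om}]
The plan is to reduce to the symplectic quotient of $(V,\om)$, where the familiar identity $\dim W+\dim W^\om=\dim V$ is available. First I would dispose of the infinite-dimensional case: if $\dim V=\infty$, then $\dim V+\dim V^\om=\infty$ and (\ref{eq:dim W om}) is trivially true, so from now on assume $\dim V<\infty$.

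Next, let $\pi\Colon V\to \bar V:=V/V^\om$ be the canonical projection; the form $\om$ descends to a \emph{symplectic} form $\bar\om$ on $\bar V$ characterized by $\pi^*\bar\om=\om$. Put $\bar W:=\pi(W)$. The key observation is that
\[\pi^{-1}(\bar W^{\bar\om})=W^\om.\]
Indeed, $v\in W^\om$ iff $\om(v,w)=0$ for every $w\in W$ iff $\bar\om(\pi v,\pi w)=0$ for every $w\in W$ iff $\pi v\in \bar W^{\bar\om}$. Since $V^\om\sub W^\om$, the restriction $\pi|_{W^\om}\Colon W^\om\to\bar W^{\bar\om}$ is surjective with kernel $V^\om$, so
\[\dim W^\om=\dim \bar W^{\bar\om}+\dim V^\om,\quad \dim\bar W=\dim W-\dim(W\cap V^\om).\]

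Since $(\bar V,\bar\om)$ is a finite-dimensional symplectic vector space, the standard orthogonality relation gives $\dim\bar W+\dim\bar W^{\bar\om}=\dim\bar V=\dim V-\dim V^\om$. Substituting the two identities above yields
\[\dim W+\dim W^\om=\dim V+\dim(W\cap V^\om),\]
which, together with the obvious estimate $\dim(W\cap V^\om)\leq\dim V^\om$, proves (\ref{eq:dim W om}) and shows that equality is equivalent to $W\cap V^\om=V^\om$, i.e.\ to $V^\om\sub W$. This last step also covers the ``furthermore'' clause. No step poses a genuine obstacle; the only point requiring care is the identity $\pi^{-1}(\bar W^{\bar\om})=W^\om$, which is where the inclusion $V^\om\sub W^\om$ is crucially used to turn the surjection onto $\bar W^{\bar\om}$ into a clean dimension count.
\end{pf}
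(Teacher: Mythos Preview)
Your argument is correct, and in fact yields the sharper identity $\dim W+\dim W^\om=\dim V+\dim(W\cap V^\om)$ in the finite-dimensional case, from which both the inequality and an ``if and only if'' version of the equality clause fall out immediately. This is a genuinely different route from the paper's proof. The paper argues directly with the linear maps $i_W^*\om^\#$ and $\om^\# i_W$ (where $\om^\#\Colon V\to V^*$ is the induced map and $i_W\Colon W\hookrightarrow V$ the inclusion), comparing their images via adjoints and bounding $\dim\ker(\om^\# i_W)$ by $\dim V^\om$; this yields the inequality uniformly, without splitting off the infinite-dimensional case. Your reduction to the symplectic quotient $\bar V=V/V^\om$ is cleaner and more conceptual---it makes the role of $W\cap V^\om$ explicit---but relies on the standard symplectic identity $\dim\bar W+\dim\bar W^{\bar\om}=\dim\bar V$ as a black box, whereas the paper's argument is essentially self-contained. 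Either approach is entirely adequate for the application.
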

\begin{proof}[Proof of Lemma \ref{le:dim W om}]\setcounter{claim}{0} To see that (\ref{eq:dim W om}) holds, we define the linear map $\om^\#\Colon V\to V^*$ by $\om^\#v:=\om(v,\cdot)$. We denote by $i_W\Colon W\to V$ the inclusion. Then $W^\om=\ker(i_W^*\om^\#)$, and therefore,
\begin{equation}\label{eq:dim V}\dim\im(i_W^*\om^\#)+\dim W^\om=\dim V.\end{equation}
Consider the canonical isomorphism $\iota\Colon V\to V^{**}$, $\iota(v)(\phi):=\phi(v)$. A direct calculation shows that $(\om^\#)^*\iota=-\om^\#$. It follows that $(\om^\# i_W)^*\iota=-i_W^*\om^\#$, and therefore
\begin{equation}\label{eq:dim i W}\dim\im(\om^\# i_W)=\dim\im(\om^\# i_W)^*=\dim\im(i_W^*\om^\#).\end{equation}
On the other hand, we have $\dim\ker(\om^\#i_W)\leq\dim\ker(\om^\#)=\dim V^\om$. Combining this with (\ref{eq:dim i W}), we obtain
\[\dim W=\dim\ker(\om^\#i_W)+\dim\im(\om^\# i_W)\leq\dim V^\om+\dim\im(i_W^*\om^\#).\]
This together with (\ref{eq:dim V}) implies (\ref{eq:dim W om}).

Assume now that $V^\om\sub W$. Then $\dim\ker(\om^\#i_W)=\dim\ker(\om^\#)$, and therefore the above argument shows that equality in (\ref{eq:dim W om}) holds. This proves Lemma \ref{le:dim W om}.
\end{proof}
\begin{proof}[Proof of Lemma \ref{le:V om V' om'}]\setcounter{claim}{0} The hypothesis $\Psi^*\om=\om'$ implies that 
\begin{equation}\label{eq:Psi V'} \Psi({V'}^{\om'})\sub (\Psi V')^\om.\end{equation}
It follows that 
\begin{equation}\label{eq:dim V' om'}\dim {V'}^{\om'}=\dim\Psi({V'}^{\om'})+\dim\ker\Psi|_{{V'}^{\om'}}\leq\dim(\Psi V')^\om+\dim\ker\Psi|_{{V'}^{\om'}},\end{equation}
and therefore 
\begin{eqnarray}\nn\dim V'+\corank\om'&=&\dim(\Psi V')+\dim\ker\Psi+\dim{V'}^{\om'}\\
\nn&\leq&\dim(\Psi V')+\dim(\Psi V')^\om\\
\label{eq:dim Psi V' Psi V' om}&&+\dim\ker\Psi+\dim\ker\Psi|_{{V'}^{\om'}}.
\end{eqnarray}
Applying Lemma \ref{le:dim W om}, inequality (\ref{eq:dim V' corank om'}) follows. The second statement is a consequence of the following two claims.
\begin{claim}\label{claim:coiso Psi V'} $\Psi V'$ is coisotropic if and only if equality in (\ref{eq:Psi V'}) holds.
\end{claim}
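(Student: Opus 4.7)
The plan is to verify both implications directly from the definitions of coisotropy and the already-established inclusion (\ref{eq:Psi V'}).

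For the direction ``equality $\Then$ coisotropic'' the argument is immediate: if $\Psi({V'}^{\om'}) = (\Psi V')^\om$, then $(\Psi V')^\om = \Psi({V'}^{\om'}) \sub \Psi V'$, which is precisely the definition of $\Psi V'$ being coisotropic.

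For the converse, I would first observe that since (\ref{eq:Psi V'}) already supplies one inclusion, it suffices to prove $(\Psi V')^\om \sub \Psi({V'}^{\om'})$. Given $v \in (\Psi V')^\om$, the coisotropy hypothesis $(\Psi V')^\om \sub \Psi V'$ yields a preimage $v' \in V'$ with $\Psi v' = v$. The only thing to check is that $v' \in {V'}^{\om'}$: for any $w' \in V'$, using $\Psi^*\om = \om'$, I would compute
\[ \om'(v', w') = \om(\Psi v', \Psi w') = \om(v, \Psi w') = 0, \]
the last equality holding because $v \in (\Psi V')^\om$ and $\Psi w' \in \Psi V'$. This places $v = \Psi v' \in \Psi({V'}^{\om'})$, giving the missing inclusion.

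There is no substantial obstacle; the statement unpacks into a routine chain of implications once the definitions are written out. The only ingredient worth flagging is the hypothesis $\Psi^*\om = \om'$, which is precisely what allows transferring $\om$-orthogonality of $v$ against $\Psi V'$ into $\om'$-orthogonality of the preimage $v'$ against all of $V'$; without it, the preimage $v'$ chosen via coisotropy need not lie in ${V'}^{\om'}$.
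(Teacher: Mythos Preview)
Your proof is correct and follows exactly the same approach as the paper. The paper handles the forward direction identically and, for the converse, simply says ``a straight-forward argument implies that $(\Psi V')^\om\sub \Psi({V'}^{\om'})$''; you have supplied precisely that straight-forward argument.
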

\begin{proof}[Proof of Claim \ref{claim:coiso Psi V'}] If equality in (\ref{eq:Psi V'}) holds then $(\Psi V')^\om=\Psi({V'}^{\om'})\sub \Psi V'$, hence $\Psi V'$ is coisotropic. Conversely, if $\Psi V'$ is coisotropic then a straight-forward argument implies that $(\Psi V')^\om\sub \Psi({V'}^{\om'})$, hence equality in (\ref{eq:Psi V'}) holds. This proves Claim \ref{claim:coiso Psi V'}.
\end{proof}
Assume now that $\dim V,\dim V'<\infty$.
\begin{claim}\label{claim:Psi V' dim V'} Equality in (\ref{eq:Psi V'}) holds if and only if equality in (\ref{eq:dim V' corank om'}) holds.
\end{claim}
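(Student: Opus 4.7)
The plan is to trace through the derivation of (\ref{eq:dim V' corank om'}) carefully and identify exactly which intermediate inequalities become equalities precisely when (\ref{eq:Psi V'}) is an equality. Inspecting the earlier proof, (\ref{eq:dim V' corank om'}) is obtained by combining two separate inequalities: first, (\ref{eq:dim V' om'}), namely
\[
\dim {V'}^{\om'} \leq \dim (\Psi V')^\om + \dim \ker\Psi|_{{V'}^{\om'}},
\]
which rests on the inclusion (\ref{eq:Psi V'}) together with the dimension formula for $\Psi|_{{V'}^{\om'}}$; and second, the application of Lemma \ref{le:dim W om} to $W := \Psi V' \subseteq V$, namely
\[
\dim (\Psi V') + \dim (\Psi V')^\om \leq \dim V + \corank \om.
\]
Adding these and using $\dim V' = \dim(\Psi V') + \dim \ker\Psi$ recovers (\ref{eq:dim V' corank om'}). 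Since both inequalities go in the same direction, equality in (\ref{eq:dim V' corank om'}) is equivalent to simultaneous equality in both.

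For the forward direction, I would assume equality in (\ref{eq:Psi V'}). The first inequality becomes an equality immediately, because $\dim \Psi({V'}^{\om'}) = \dim (\Psi V')^\om$ by hypothesis. For the second, Claim \ref{claim:coiso Psi V'} tells me that $\Psi V'$ is coisotropic in $V$. Combining with the general fact $V^\om \subseteq W^\om$ for any subspace $W \subseteq V$ (clear from the defining bilinearity of $\om$), I get $V^\om \subseteq (\Psi V')^\om \subseteq \Psi V'$. The second clause of Lemma \ref{le:dim W om} then gives equality in the second inequality. Combined, equality in (\ref{eq:dim V' corank om'}) follows.

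For the reverse direction, I would assume equality in (\ref{eq:dim V' corank om'}) and extract equality in the first inequality (\ref{eq:dim V' om'}). This forces $\dim \Psi({V'}^{\om'}) = \dim (\Psi V')^\om$; since (\ref{eq:Psi V'}) already provides one containment in finite dimensions, the dimensions matching upgrades it to an equality.

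There is no real obstacle here — the whole argument is bookkeeping about when chained inequalities are sharp, and the nontrivial input has already been supplied by Lemma \ref{le:dim W om} and Claim \ref{claim:coiso Psi V'}. The only mild subtlety is recognising that equality in (\ref{eq:Psi V'}) automatically feeds into \emph{both} intermediate inequalities (the second via coisotropy, which is why Lemma \ref{le:dim W om}'s ``$V^\om \subseteq W$'' hypothesis ends up holding for free).
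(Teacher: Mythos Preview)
Your proof is correct and follows essentially the same approach as the paper: both directions proceed by recognising that (\ref{eq:dim V' corank om'}) is the sum of the inequality (\ref{eq:dim V' om'}) and the inequality from Lemma \ref{le:dim W om} applied to $W=\Psi V'$, and that equality in (\ref{eq:Psi V'}) controls both simultaneously (the second via the inclusion $V^\om\sub(\Psi V')^\om=\Psi({V'}^{\om'})\sub\Psi V'$). The only cosmetic difference is that for the reverse direction the paper writes out the dimension chain explicitly rather than appealing to the ``sum of two same-sign inequalities'' principle, but the content is identical.
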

\begin{pf}[Proof of Claim \ref{claim:Psi V' dim V'}] Suppose that equality in (\ref{eq:Psi V'}) holds. Then equality in (\ref{eq:dim V' om'}) and in (\ref{eq:dim Psi V' Psi V' om}) holds. Furthermore, $V^\om\sub(\Psi V')^\om=\Psi ({V'}^{\om'})\sub\Psi V'$, and hence by Lemma \ref{le:dim W om}
\[\dim(\Psi V')+\dim(\Psi V')^\om=\dim V+\corank\om.\]
Combining this with (\ref{eq:dim Psi V' Psi V' om}), it follows that equality in (\ref{eq:dim V' corank om'}) holds. 

Assume now on the contrary that equality in (\ref{eq:dim V' corank om'}) holds. Then
\begin{eqnarray}\nn \dim(\Psi V')^\om&\leq&\dim V+\dim V^\om-\dim\Psi V'\\
\nn&=&\dim V'-\dim\Psi V'-\dim\ker\Psi+\dim {V'}^{\om'}-\dim\ker\Psi|_{{V'}^{\om'}}\\
\nn&=&0+\dim\Psi({V'}^{\om'}).
\end{eqnarray}
Here in the first step we used Lemma \ref{le:dim W om}. It follows that equality in (\ref{eq:Psi V'}) holds. This proves Claim \ref{claim:Psi V' dim V'} and concludes the proof of Lemma \ref{le:V om V' om'}.
\end{pf}
\end{proof}
\begin{proof}[Proof of Proposition \ref{prop:M om}]\setcounter{claim}{0} Let $M,\om,M',\psi$ and $\om'$ be as in the hypothesis. We choose a point $x'\in M'$, and define 
\[V':=T_{x'}M',\,\Om':=\om'|_{V'\x V'},\,V:=T_{\psi(x')}M,\,\Om:=\om|_{V\x V},\,\Psi:=d\psi(x').\]
Then the hypotheses of Lemma \ref{le:V om V' om'} are satisfied with $\om,\om'$ replaced by $\Om,\Om'$. It follows that inequality (\ref{eq:dim V' corank om'}) holds. Since $\Psi$ is injective, this implies inequality (\ref{eq:dim M' corank om'}), provided that $\om'$ has constant corank.

Suppose now that $\psi$ is an embedding. Assume that $\om'$ has constant corank and equality in (\ref{eq:dim M' corank om'}) holds. Let $x'\in M'$. Applying Lemma \ref{le:V om V' om'} with $V,V'$ and $\Psi$ as above, and $\om,\om'$ replaced by $\Om,\Om'$, it follows that $T_{\psi(x')}\psi(M')=\Psi V'\sub T_{\psi(x')}M$ is coisotropic. Hence $\psi(M')\sub M$ is coisotropic. Conversely, assuming that $\psi(M')\sub M$ is coisotropic, Lemma \ref{le:V om V' om'} implies that the corank of $\om'$ at any point $x'\in M'$ equals $\dim M+\corank\om-\dim M'$. This proves Proposition \ref{prop:M om}.
\end{proof}
The next result was used in Section \ref{sec:main}. Let $(X,\si)$ be a closed symplectic manifold, $\pi\Colon E\to X$ a closed smooth fiber bundle, $H\sub TE$ a horizontal subbundle, and let $N,\pi_X,\iota_H,\om:=\Om_{\si,H}$ and $M$ be as in the construction explained in that section, on p.~\pageref{V}.
\begin{prop}\label{prop:N} $N\sub M$ is a regular coisotropic submanifold. Furthermore, if $(X,\si)$ is symplectically aspherical then $A(M,\om,N)=\infty$.
\end{prop}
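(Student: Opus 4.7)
The plan is to treat the two assertions in order. I would first identify the zero section $N\sub V^*E$ with $E$ via the canonical isomorphism, so that the inclusion $i\Colon N\hookrightarrow V^*E$ satisfies $\pi_X\circ i=\pi$ and $\iota_H\circ i$ is the zero section of $T^*E$. Because the tautological 1-form on $T^*E$ vanishes along its zero section, $i^*\iota_H^*\om_\can=0$, whence $\om|_N=\pi^*\si$. The kernel of $\pi^*\si$ at each $e\in E$ is the vertical subspace $\ker d\pi(e)$, of dimension $\dim E-\dim X$, so $\om|_N$ has constant corank and
\[\dim N+\corank\om|_N=2\dim E-\dim X=\dim M+\corank\om.\]
By Proposition \ref{prop:M om} this forces $N$ to be coisotropic. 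Regularity follows at once: the isotropic distribution on $N$ coincides with the vertical distribution $\ker d\pi$, and $\pi\Colon E\to X$ being a smooth fiber bundle guarantees that the induced foliation is regular and that $N_\om$ inherits a manifold structure making $\pi_N$ a submersion.

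For the second assertion it suffices to show $\int_\D u^*\om=0$ for every $u\in C^\infty(\D,M)$ with $u(S^1)$ contained in some isotropic leaf $F\sub N$. Writing $\om=\pi_X^*\si+\iota_H^*\om_\can$, I would split
\[\int_\D u^*\om=\int_\D(\pi_X\circ u)^*\si+\int_\D(\iota_H\circ u)^*\om_\can.\]
The second summand vanishes by Stokes: since $u(S^1)\sub N$ and $\iota_H$ sends $N$ into the zero section of $T^*E$, the map $\iota_H\circ u$ sends $S^1$ into the zero section, on which the tautological primitive of $\om_\can$ vanishes identically.

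For the first summand, set $v:=\pi_X\circ u\Colon\D\to X$. Since $F$ lies in a single fiber of $\pi$, $v$ is constant equal to some $x_0\in X$ on $S^1$. I would collapse $v$ into a smooth sphere map using a reparametrization trick like the one in the proof of Lemma \ref{le:A M om N}: pick a smooth $\rho\Colon[0,1]\to[0,1]$ with $\rho(0)=0$, $\rho(1)=1$, $\rho'(r)>0$ for $r\in(0,1)$, and $\rho^{(k)}(1)=0$ for every $k\geq 1$, and set $f(re^{i\th}):=\rho(r)e^{i\th}$. All partial derivatives of $v\circ f$ then extend continuously to zero along $S^1$, since the tangential derivatives of $v$ on $S^1$ vanish ($v$ being constant there) and all higher radial derivatives of $\rho$ vanish at $1$. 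Since $f$ restricts to a diffeomorphism on the open disk $B_1$, a change of variables gives $\int_\D(v\circ f)^*\si=\int_\D v^*\si$. Gluing $v\circ f$ with the constant map $x_0$ along the common boundary of two copies of $\D$ therefore produces a smooth map $\tilde v\Colon S^2\to X$ with $\int_{S^2}\tilde v^*\si=\int_\D v^*\si$. The symplectic asphericity of $(X,\si)$ makes the left side zero, giving $\int_\D u^*\om=0$; hence $S(M,\om,N)\cap(0,\infty)=\empty$ and $A(M,\om,N)=\infty$.

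The main technical subtlety is verifying that the glued map $\tilde v$ is genuinely smooth across the equator of $S^2$; this smoothness hinges on the infinite flatness of $v\circ f$ along $S^1$, which is precisely what the vanishing of all higher derivatives of $\rho$ at $r=1$ buys.
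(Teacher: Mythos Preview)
Your argument for the first assertion matches the paper's: both identify $\om|_N$ with $\pi^*\si$ via Lemma \ref{le:pi si}, verify the corank identity, invoke Proposition \ref{prop:M om}, and read off regularity from the fiber-bundle structure.

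For the second assertion your route is genuinely different and more direct. The paper first homotopes $u$ to a map $u_0$ with image in the zero section $N$, using the fiberwise retraction $h(t,e,\al)=(e,t\al)$ together with Lemma \ref{le:homotopy} to show $\int_\D u^*\om=\int_\D u_0^*\om$; only then does it project to $X$ and invoke asphericity. You instead exploit the global decomposition $\om=\pi_X^*\si+\iota_H^*\om_\can$ and treat the two summands separately: the $\om_\can$-piece dies by Stokes because $\om_\can$ is exact with primitive $\lambda_\can$ vanishing on the zero section (where $\iota_H\circ u$ sends $S^1$), and the $\si$-piece is handled by the sphere argument. Your approach avoids the homotopy lemma entirely and is shorter; the paper's approach has the mild advantage that once $u$ is pushed into $N$ one is working purely with $\pi^*\si$, so the sphere construction is slightly cleaner (the map $u_0$ is eventually constant near $S^1$, so smoothness at infinity is automatic).

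One small gap: your conditions on $\rho$ do not guarantee that $f(re^{i\th})=\rho(r)e^{i\th}$ is smooth at the origin. Writing $f(z)=\frac{\rho(|z|)}{|z|}\,z$, smoothness at $0$ fails for a generic smooth $\rho$ with $\rho(0)=0$ (e.g.\ $\rho(r)=r+r^2$ gives $f(z)=z+|z|z$, which is not $C^2$). The fix is the one the paper uses in the analogous constructions: require $\rho(r)=r$ for $r$ near $0$, so that $f=\id$ near the origin. With that adjustment your proof goes through.
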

For the proof of Proposition \ref{prop:N} we need the following. We denote by $i_E$ the embedding of $E$ as the zero section $N\sub V^*E$. 
\begin{lemma}\label{le:pi si} We have $\pi^*\si=i_E^*\om$. 
\end{lemma}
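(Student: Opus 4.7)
The plan is to exploit the explicit form $\om = \Om_{\si,H} = \pi_X^*\si + \iota_H^*\om_\can$ and pull back each summand under $i_E$ separately.

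First, observe the identity $\pi_X \circ i_E = \pi$, since $i_E$ embeds $E$ as the zero section and the canonical projection $\pi_X\Colon V^*E \to X$ factors through $\pi$ via this identification. Consequently, $i_E^*\pi_X^*\si = (\pi_X\circ i_E)^*\si = \pi^*\si$.

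Next, I claim that $i_E^*\iota_H^*\om_\can = 0$. The composition $\iota_H \circ i_E\Colon E \to T^*E$ sends $e \mapsto \iota_H(e,0) = (e, 0\circ \pr^H_e) = (e,0)$, so it is precisely the zero section of $T^*E$. Since the canonical symplectic form equals $\om_\can = d\lam_\can$ for the Liouville one-form $\lam_\can$, and $\lam_\can$ vanishes identically on the zero section, we obtain $(\iota_H\circ i_E)^*\om_\can = d\bigl((\iota_H\circ i_E)^*\lam_\can\bigr) = 0$.

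Combining the two computations yields $i_E^*\om = \pi^*\si + 0 = \pi^*\si$, as required. There is no genuine obstacle here: the only subtlety is verifying that $\iota_H$ composed with the zero-section inclusion lands in the zero section of $T^*E$, which is transparent from the definition $\iota_H(e,\al) = (e,\al\circ\pr^H_e)$ with $\al = 0$. The horizontal distribution $H$ therefore plays no role in this particular statement; it will only matter later when one studies $\om$ away from the zero section.
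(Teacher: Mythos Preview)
Your proof is correct and essentially identical to the paper's own argument: both observe that $\pi_X\circ i_E=\pi$ and that $\iota_H\circ i_E$ is the zero section of $T^*E$, then use that the Liouville one-form vanishes there to kill the $\iota_H^*\om_\can$ term. The only cosmetic difference is the sign convention for $\om_\can$ (the paper uses $\om_\can=-d\lam_\can$), which is irrelevant since the term vanishes anyway.
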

\begin{proof}[Proof of Lemma \ref{le:pi si}]\setcounter{claim}{0} We denote by $j_E$ the embedding of $E$ as the zero-section of $T^*E$. Then $\iota_H\circ i_E=j_E$, and therefore, denoting by $\lam_\can$ the canonical one-form on $T^*E$, we obtain $i_E^*\iota_H^*\om_\can=-dj_E^*\lam_\can=-d0=0$. Since $\pi_X\circ i_E=\pi$, it follows that $\pi^*\si=i_E^*\big(\pi_X^*\si+\iota_H^*\om_\can\big)=i_E^*\om$. This proves Lemma \ref{le:pi si}. 
\end{proof}
For any manifold $M$ and any positive integer $k$ we denote by $\Om^k(M)$ the space of differential forms of degree $k$. 
\begin{lemma}\label{le:homotopy} Let $M$ and $N$ be smooth manifolds, $k\geq1$, $\om\in\Om^k(N)$ a closed form, and $u\Colon [0,1]\x M\to N$ a smooth map such that $u(t,x)=u(0,x)$, for every $t\in[0,1]$, $x\in \d M$. Then there exists $\al\in\Om^{k-1}(M)$ such that $d\al=u(1,\cdot)^*\om-u(0,\cdot)^*\om$, and $\al(x)=0$, for all $x\in\d M$.
\end{lemma}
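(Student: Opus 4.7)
The plan is to use the classical Cartan (fiber-integration) homotopy formula for differential forms, observing that the boundary hypothesis on $u$ forces the resulting primitive to vanish on $\partial M$.

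First I would decompose the pulled-back form on $[0,1]\times M$. Writing every form on $[0,1]\times M$ uniquely as the sum of a piece that contains $dt$ and a piece that does not, I set
\[u^*\om = dt\wedge\be_t + \ga_t,\]
where $\be_t\in\Om^{k-1}(M)$ and $\ga_t\in\Om^k(M)$ are smooth families (with no $dt$-component). Note $\ga_i=u(i,\cdot)^*\om$ for $i=0,1$. I then define $\al:=\int_0^1\be_t\,dt\in\Om^{k-1}(M)$.

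Second, I would exploit closedness of $\om$. Since $d(u^*\om)=0$ and $d$ on $[0,1]\x M$ splits as $dt\wedge\d_t+d_M$, collecting the component containing $dt$ yields the pointwise identity $\d_t\ga_t=d_M\be_t$. Integrating over $t\in[0,1]$ and commuting $d_M$ with the integral gives
\[u(1,\cdot)^*\om-u(0,\cdot)^*\om=\ga_1-\ga_0=\int_0^1 \d_t\ga_t\,dt=d_M\int_0^1\be_t\,dt=d\al,\]
which is the required identity.

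Third, for the boundary condition: at any $x\in\d M$ the hypothesis $u(t,x)=u(0,x)$ forces $du(t,x)(\d_t)=0$, so the interior product $\iota_{\d_t}(u^*\om)|_{(t,x)}=0$. But that interior product is exactly $\be_t(x)$ (in our decomposition), so $\be_t(x)=0$ for all $t\in[0,1]$ and $x\in\d M$. Integrating in $t$ gives $\al(x)=0$ for every $x\in\d M$. There is no real obstacle here; the only care needed is the sign bookkeeping in the $dt$-decomposition of $d(u^*\om)$, and the observation that the boundary vanishing of $\be_t$ passes to $\al$ by linearity of the integral.
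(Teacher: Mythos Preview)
Your argument is correct: this is precisely the standard Cartan fiber-integration (chain homotopy) construction, and the boundary hypothesis kills $\iota_{\partial_t}u^*\om$ and hence $\alpha$ on $\partial M$ exactly as you say. The paper does not give an independent proof but simply refers to the proof of Theorem~VI(7.13) in Boothby, which is this same homotopy-operator argument; so your approach and the paper's coincide, with yours spelled out in full.
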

\begin{proof}[Proof of Lemma \ref{le:homotopy}]\setcounter{claim}{0} This follows from the proof of Theorem VI(7.13) p. 270 in the book \cite{Bo}. 
\end{proof}
\begin{rmk}\label{rmk:leaf} Let $(M,\om)$ and $(M',\om')$ be presymplectic manifolds, and $\psi\Colon M'\to M$ a diffeomorphism such that $\psi^*\om=\om'$. Then by a straight-forward argument the image of every isotropic leaf of $M'$ under $\psi$ is an isotropic leaf of $M$. Furthermore, the map $M'_{\om'}\ni F'\mapsto \psi(F')\in M_\om$ is a bijection. 
\end{rmk}
\begin{proof}[Proof of Proposition \ref{prop:N}]\setcounter{claim}{0} We may assume without loss of generality that $E$ is connected. By Lemma \ref{le:pi si} we have $i_E^*\om=\pi^*\si$. Furthermore, for every $e\in E$ we have $T_eE^{\pi^*\si}=\ker d\pi(e)$, and hence $i_E^*\om$ has constant corank equal to the dimension of the fiber of $E$. It follows that equality in (\ref{eq:dim M' corank om'}) holds with $M':=E$, $\psi:=i_E$, and $\om':=\psi^*\om$. Hence by Proposition \ref{prop:M om} the submanifold $N\sub M$ is coisotropic. Furthermore, the leaf relation of $(E,\pi^*\si)$ consists of all pairs $(x_0',x_1')\in E\x E$ that lie in the same connected component of one of the fibers of $E$. It follows from an argument involving local trivializations for $E$ that this is a closed subset and a submanifold. Hence $(E,\pi^*\si)$ is regular. Since $\om|_N$ is the push-forward of $\pi^*\si$ under the diffeomorphism $i_E\Colon E\to N$, it follows that $N$ is regular. 

To prove the second statement, assume that $(X,\si)$ is aspherical. Let $u\in C^\infty(\D,M)$ be a map such that there exists a leaf $F\in N_\om$ satisfying $u(S^1)\sub F$. It suffices to prove that $\int_\D u^*\om=0$. To see this, we denote by $\pi_0$ the canonical projection from $V^*E$ to its zero-section $N$. We choose a smooth function $\rho\Colon [0,1]\to [0,1]$ such that $\rho(r)=r$, for $r\leq1/3$, and $\rho(r)=1$, for $r\geq2/3$. We define $u_0\Colon \D\to M$ by $u_0(rz):=\pi_0\circ u(\rho(r)z)$, for $r\in[0,1]$ and $z\in S^1$.
\begin{claim}\label{claim:int D u u 0} We have
\begin{equation}\label{eq:int D u u 0}\int_\D u^*\om=\int_\D u_0^*\om.\end{equation}
\end{claim}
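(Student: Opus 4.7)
The plan is to construct a smooth homotopy between $u$ and $u_0$ in $V^*E$ that is stationary on the boundary $S^1$, and then combine Lemma \ref{le:homotopy} with Stokes' theorem. The key preliminary observation is that $u$ and $u_0$ already agree on $\d\D$: for $z\in S^1$ we have $u_0(z)=\pi_0(u(\rho(1)z))=\pi_0(u(z))$, and since $u(z)\in F\sub N$ sits on the zero section, $\pi_0$ acts as the identity there.

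I would build the homotopy in two consecutive stages and then smoothly concatenate. The first stage deforms the reparametrization from $\id_\D$ to $\phi$: define $\phi_s\Colon \D\to\D$ by $\phi_s(rz):=((1-s)r+s\rho(r))z$, which is smooth through the origin because $\rho$ equals the identity near $0$, and set $U_s^{(1)}:=u\circ\phi_s$. Since $\rho(1)=1$, each $\phi_s$ fixes $S^1$ pointwise, so this stage runs from $u$ to $u\circ\phi$ while keeping $u|_{S^1}$ fixed. The second stage contracts to the zero section via the vector bundle structure $V^*E\to E$: define $U_s^{(2)}(w):=(1-s)\cdot(u\circ\phi)(w)$, where $\cdot$ denotes fiberwise scalar multiplication. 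This runs from $u\circ\phi$ at $s=0$ to $u_0=\pi_0\circ u\circ\phi$ at $s=1$, and on $S^1$ it is stationary because $(u\circ\phi)(z)=u(z)$ already lies on the zero section, which is fixed by scaling. Concatenating these two stages via a standard reparametrization of $[0,1]$ (flat to all orders at the junction) yields a smooth family $\{U_s\}_{s\in[0,1]}$ with $U_0=u$, $U_1=u_0$, and $U_s|_{S^1}=u|_{S^1}$ for all $s$.

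With this homotopy in hand, Lemma \ref{le:homotopy} applied to the closed two-form $\Om_{\si,H}$ on $V^*E$ and the map $(s,w)\mapsto U_s(w)$ produces $\al\in\Om^1(\D)$ with $d\al=u_0^*\om-u^*\om$ and $\al|_{S^1}=0$. Stokes' theorem then gives $\int_\D u_0^*\om-\int_\D u^*\om=\int_{S^1}\al=0$, which is (\ref{eq:int D u u 0}). The main point to watch is that the second-stage homotopy may well leave the non-degeneracy neighborhood $M$; this is harmless, since $\Om_{\si,H}$ is by construction a closed two-form on \emph{all} of $V^*E$, and only closedness (not non-degeneracy) is needed for Lemma \ref{le:homotopy} and Stokes.
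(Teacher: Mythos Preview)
Your proof is correct and follows essentially the same approach as the paper: both arguments build a boundary-fixing homotopy in $V^*E$ from $u$ to $u_0$ out of the two ingredients \emph{fiberwise scaling} toward the zero section and \emph{interpolation of the radial reparametrization} from $\id$ to $\rho$, and then apply Lemma~\ref{le:homotopy} together with Stokes. The only difference is organizational: the paper packages both deformations into a single one-parameter family $f(t,rz)=h\big(t,u\big((tr+(1-t)\rho(r))z\big)\big)$, whereas you run the two stages consecutively and concatenate.
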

\begin{proof}[Proof of Claim \ref{claim:int D u u 0}] We define the map $h\Colon [0,1]\x V^*E\to V^*E$ by $h(t,e,\al):=(e,t\al)$, and the map $f\Colon [0,1]\x\D\to V^*E$ by
\[f(t,rz):=h\Big(t,u\big(\big(tr+(1-t)\rho(r)\big)z\big)\Big),\,\forall r\in[0,1],\,z\in S^1.\] 
Observe that $f(0,\cdot)=u_0$, and $f(1,\cdot)=u$. Since $u(S^1)\sub F\sub N$, we have $f(t,z)=f(0,z)$, for every $t\in[0,1]$ and $z\in S^1$. Hence the hypotheses of Lemma \ref{le:homotopy} are satisfied, with $M,N$ replaced by $\D,V^*E$, and $u$ replaced by the map $f$. If follows that there exists $\al\in\Om^1(\D)$ such that 
\[d\al=u^*\om-u_0^*\om,\quad \al(z)=0,\,\forall z\in S^1.\]
Together with Stokes' Theorem this implies (\ref{eq:int D u u 0}). This proves Claim \ref{claim:int D u u 0}.
\end{proof}
\begin{claim}\label{claim:int D u 0} We have $\int_\D u_0^*\om=0$. \end{claim}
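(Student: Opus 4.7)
The plan is to reduce $u_0^*\om$ to the pullback of the base form $\si$ under a map $\D\to X$ that is constant in an open neighborhood of $\d\D$, and then to invoke symplectic asphericity of $(X,\si)$ after collapsing this map to a map $S^2\to X$.

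First, I would use that $u_0$ takes values in the zero-section $N$ to factor $u_0=i_E\circ v$ for a unique smooth map $v\Colon\D\to E$. By Lemma \ref{le:pi si}, $u_0^*\om=v^*(i_E^*\om)=v^*\pi^*\si=(\pi\circ v)^*\si$, so it will suffice to show $\int_\D(\pi\circ v)^*\si=0$.

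Next I would verify that $\pi\circ v$ is constant on the closed annulus $A:=\{rz\in\D\,|\,2/3\leq r\leq 1,\,z\in S^1\}$. Indeed, for $rz\in A$ one has $\rho(r)=1$, so $v(rz)=i_E^{-1}\circ\pi_0\circ u(z)$ depends only on $z$. For $z\in S^1$, $u(z)$ lies in the isotropic leaf $F$ of $(N,\om|_N)$; by Remark \ref{rmk:leaf} applied to $i_E\Colon E\to N$ (using Lemma \ref{le:pi si}), the preimage $i_E^{-1}(F)$ is an isotropic leaf of $(E,\pi^*\si)$. Since $\si$ is symplectic we have $T_eE^{\pi^*\si}=\ker d\pi(e)$, so the isotropic leaves of $(E,\pi^*\si)$ are precisely the connected components of the fibers of $\pi$. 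Hence $\pi$ is constant on $i_E^{-1}(F)$, and therefore $\pi\circ v\equiv x_0$ on $A$ for some $x_0\in X$.

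Finally, because $\pi\circ v$ is constant on an open neighborhood of $\d\D$, I would collapse $\D$ to $S^2$ by any smooth map $c\Colon\D\to S^2$ that restricts to a diffeomorphism from $B_{2/3}$ onto $S^2$ minus a point and sends $A$ to that point. Then $\pi\circ v$ factors as $w\circ c$ for a smooth $w\Colon S^2\to X$, and since $(\pi\circ v)^*\si$ vanishes on $A$ we obtain $\int_\D(\pi\circ v)^*\si=\int_{S^2}w^*\si$, which equals $0$ by symplectic asphericity of $(X,\si)$. The only mildly delicate point is the smoothness of $w$, but this is automatic from the constancy of $\pi\circ v$ on the open annulus $\{r>2/3\}$, a feature built into $u_0$ by the choice of cutoff function $\rho$.
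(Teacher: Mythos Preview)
Your proposal is correct and follows essentially the same approach as the paper: both reduce $u_0^*\om$ to the pullback of $\si$ under the map $\pi_X\circ u_0=\pi\circ v\Colon\D\to X$, observe that this map is constant near $\d\D$ because $F$ sits inside a single fiber of $\pi$, and then invoke asphericity of $(X,\si)$ after passing to a map $S^2\to X$. The only cosmetic difference is that the paper realizes the last step via a diffeomorphism $\phi\Colon\C\to B_1$ (extending by a constant at $\infty$), which makes the smoothness of the resulting sphere map immediate since it is literally constant on a neighborhood of $\infty$; your collapse map $c\Colon\D\to S^2$ is the inverse picture and works just as well, though the smoothness of $c$ itself requires a moment's thought.
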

\begin{proof}[Proof of Claim \ref{claim:int D u 0}] To see this, we choose an orientation preserving diffeomorphism $\phi\Colon \C\to B_1$, and we define the map $f\Colon S^2\iso\C\cup\{\infty\}\to X$ by 
\[f(z):=\left\{\begin{array}{ll}
\pi_X\circ u_0\circ\phi(z),&\textrm{if }z\in \C,\\
\pi_X\circ u(1),&\textrm{if }z=\infty.
\end{array}\right.\]
\begin{claim}\label{claim:f} This map is smooth.
\end{claim}
\begin{proof}[Proof of Claim \ref{claim:f}] $f|_\C$ is smooth. Furthermore, by Remark \ref{rmk:leaf} there exists a $\pi^*\si$-isotropic leaf $F'$ of $E$ such that $i_E(F')=F$. Let $e_0\in F'$. Since for every $e\in E$ we have $T_eE^{\pi^*\si}=\ker d\pi(e)$, it follows that $F'$ is the connected component of the fiber of $E$ containing $e_0$. Since $\pi_X\circ i_E=\pi$, this implies that $\pi_X$ equals the constant $\pi(e_0)\in X$ on $F$. We choose a number $r_0>0$ such that $|\phi(rz)|\geq2/3$, for $r\geq r_0$ and $z\in S^1$. Let $z\in \C\wo B_{r_0}$. Then $u_0\circ \phi(z)\in u(S^1)\sub F$, and therefore $f(z)=\pi(e_0)$. Since also $f(\infty)=\pi(e_0)$, it follows that $f$ is smooth on $S^2$. This proves Claim \ref{claim:f}.
\end{proof}
By Claim \ref{claim:f} and symplectic asphericity of $X$ we have
\begin{equation}\label{eq:0 int}0=\int_{S^2}f^*\si=\int_\C\big(\pi_X\circ u_0\circ\phi\big)^*\si=\int_{B_1}(\pi_X\circ u_0)^*\si.
\end{equation}
Let $v_0\Colon \D\to E$ be the unique map such that $i_E\circ v_0=u_0$. Then $\pi_X\circ u_0=\pi\circ v_0$, and hence using Lemma \ref{le:pi si}, 
\[(\pi_X\circ u_0)^*\si=v_0^*\pi^*\si=v_0^*i_E^*\om=u_0^*\om.\]
Inserting this into (\ref{eq:0 int}), Claim \ref{claim:int D u 0} follows. \end{proof}
Claims \ref{claim:int D u u 0} and \ref{claim:int D u 0} imply that $\int_\D u^*\om=0$. It follows that $A(M,\om,N)=\infty$. This proves the second statement and completes the proof of Proposition \ref{prop:N}. 
\end{proof}
The next lemma was used in the proof of Theorem \ref{thm:leaf}.
\begin{lemma}\label{le:phi psi}Let $(M,\om)$ and $(M',\om')$ be symplectic manifolds, $\phi,\psi\in\Ham(M,\om)$, and $\phi',\psi'\in\Ham(M',\om')$. Then 
\[d^{M\x M',\om\oplus\om'}\big(\phi\x\phi',\psi\x\psi'\big)\leq d^{M,\om}(\phi,\psi)+d^{M',\om'}(\phi',\psi').\]
\end{lemma}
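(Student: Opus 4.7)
The plan is to exploit the fact that Hofer's norm is essentially additive under the natural sum construction on Hamiltonians on a product manifold. Given any $H\in C^\infty([0,1]\x M,\R)$ with $\phi_H^1=\psi^{-1}\circ\phi$ and any $H'\in C^\infty([0,1]\x M',\R)$ with $\phi_{H'}^1=\psi'^{-1}\circ\phi'$, I would define the ``sum'' Hamiltonian
\[\wt H\Colon [0,1]\x(M\x M')\to\R,\quad \wt H(t,x,x'):=H(t,x)+H'(t,x').\]
A routine check, using that $d\wt H_t=\pr_M^*dH_t+\pr_{M'}^*dH'_t$ and that the symplectic form on the product is $\om\oplus\om'$, shows that the Hamiltonian vector field of $\wt H_t$ on $(M\x M',\om\oplus\om')$ is the direct sum $X_{H_t}\oplus X_{H'_t}$. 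Consequently its flow splits as a product, so
\[\phi_{\wt H}^1=\phi_H^1\x\phi_{H'}^1=(\psi^{-1}\circ\phi)\x({\psi'}^{-1}\circ\phi')=(\psi\x\psi')^{-1}\circ(\phi\x\phi').\]
Thus $\wt H$ is an admissible Hamiltonian for computing $d^{M\x M',\om\oplus\om'}(\phi\x\phi',\psi\x\psi')$.

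Next I would estimate the Hofer norm of $\wt H$. For each fixed $t\in[0,1]$, since $\wt H_t$ is a pointwise sum of functions pulled back from the two factors,
\[\sup_{M\x M'}\wt H_t=\sup_M H_t+\sup_{M'}H'_t,\quad \inf_{M\x M'}\wt H_t=\inf_M H_t+\inf_{M'}H'_t,\]
so the oscillation of $\wt H_t$ equals the sum of the oscillations of $H_t$ and $H'_t$. Integrating over $t$ yields $\Vert\wt H\Vert_{M\x M'}=\Vert H\Vert_M+\Vert H'\Vert_{M'}$.

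By the definition (\ref{eq:d phi psi}) of the Hofer distance it follows that
\[d^{M\x M',\om\oplus\om'}(\phi\x\phi',\psi\x\psi')\leq\Vert\wt H\Vert_{M\x M'}=\Vert H\Vert_M+\Vert H'\Vert_{M'}.\]
Taking the infimum first over admissible $H$ and then over admissible $H'$ gives the claimed inequality. (In case either factor distance is infinite, the inequality is trivial; otherwise the infima are attained in the limit by sequences of admissible Hamiltonians.) There is no real obstacle here beyond bookkeeping; the only slightly delicate point is the pointwise identity for $\sup$ and $\inf$ of $\wt H_t$, which holds precisely because $H_t$ and $H'_t$ depend on disjoint sets of variables, so one may independently choose points in $M$ and $M'$ realising the suprema (and similarly the infima) up to $\eps$.
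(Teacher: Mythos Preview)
Your proof is correct and follows essentially the same approach as the paper's: define the sum Hamiltonian $\wt H(t,x,x')=H(t,x)+H'(t,x')$, observe that it generates $(\psi^{-1}\circ\phi)\x({\psi'}^{-1}\circ\phi')$, use the additivity of the pointwise oscillation to get $\Vert\wt H\Vert=\Vert H\Vert+\Vert H'\Vert$, and pass to the infimum. The paper's version is terser, but the content is identical.
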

\begin{proof}[Proof of Lemma \ref{le:phi psi}]\setcounter{claim}{0} If $H\Colon [0,1]\x M\to \R$ and $H'\Colon [0,1]\x M'\to\R$ are smooth Hamiltonians generating $\psi^{-1}\circ\phi$ and ${\psi'}^{-1}\circ\phi'$ respectively, then the function
\[\wt H\Colon [0,1]\x \wt M\to \R,\quad \wt H(t,x,x'):=H(t,x)+H'(t,x')\]
generates $(\psi^{-1}\circ\phi)\x({\psi'}^{-1}\circ\phi')$. Furthermore, we have $\sup_{\wt M}\wt H(t,\cdot,\cdot)=\sup_MH(t,\cdot)+\sup_{M'}H'(t,\cdot),$ for every $t\in[0,1]$, and similarly for the infimum. It follows that $\Vert\wt H\Vert_{M\x M',\om\oplus\om'}=\Vert H\Vert_{M,\om}+\Vert H'\Vert_{M',\om'}$. The statement of Lemma \ref{le:phi psi} follows from this. 
\end{proof}
The next lemma was used in the proof of Theorem \ref{thm:L}. 
\begin{lemma}\label{le:H} Let $(M,\om)$ be a symplectic manifold, $\phi\in\Ham(M,\om)$, $K\sub M$ a compact subset, and $C>d(\phi,\id)$ a constant. Then there exists a function $H\in C^\infty_\c\big([0,1]\x M,\R\big)$ such that 
\begin{equation}\label{eq:H}\phi_{H}^1=\phi\textrm{ on }K,\quad \Vert H\Vert<C.
\end{equation}
\end{lemma}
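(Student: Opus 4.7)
The plan is to start from an almost-optimal (not necessarily compactly supported) generating Hamiltonian $G$ for $\phi$ and cut it off to a compactly supported $H$ in such a way that (a) the Hamiltonian vector field is unchanged on a neighborhood containing all trajectories through $K$, and (b) the Hofer norm only grows by an arbitrarily small amount. First, by the definition (\ref{eq:d phi psi}) of $d(\phi,\id)$, I would choose $G\in C^\infty([0,1]\x M,\R)$ with $\phi_G^1=\phi$ and $\Vert G\Vert<C$, and set $\eps:=C-\Vert G\Vert>0$.

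Next I would construct the cut-off. The set $K':=\bigcup_{t\in[0,1]}\phi_G^t(K)$ is compact, since it is the image of $[0,1]\x K$ under the smooth map $(t,x)\mapsto\phi_G^t(x)$. Fix a relatively compact open $U\supset K'$ and a function $\chi\in C^\infty_\c(M,[0,1])$ with $\chi\equiv1$ on an open neighborhood of $\overline U$. Because $X_{c(t)}=0$ for any time-dependent constant $c(t)$, I am free to recenter $G$ before cutting off. Setting $m_\chi(t):=\inf_{\operatorname{supp}\chi}G_t$ and $M_\chi(t):=\sup_{\operatorname{supp}\chi}G_t$, both are continuous on $[0,1]$ by compactness of $\operatorname{supp}\chi$, so I can pick $c\in C^\infty([0,1],\R)$ satisfying $c(t)\leq m_\chi(t)$ and $\int_0^1(m_\chi(t)-c(t))\,dt<\eps/2$; this is possible by uniformly approximating the continuous function $m_\chi-\eps/4$ by a smooth function. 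Then define
\[H(t,x):=\chi(x)\bigl(G(t,x)-c(t)\bigr).\]

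It remains to verify (\ref{eq:H}). The support of $H$ is contained in $[0,1]\x\operatorname{supp}\chi$, hence compact, so $H\in C^\infty_\c([0,1]\x M,\R)$. On the neighborhood of $\overline U$ where $\chi\equiv1$, one has $H_t=G_t-c(t)$, so $X_{H_t}=X_{G_t}$ there. For $x\in K$ the trajectory $\phi_G^t(x)$ stays in $U$ for all $t\in[0,1]$, and by uniqueness of ODE solutions the trajectory $\phi_H^t(x)$ coincides with $\phi_G^t(x)$ as long as it remains in that neighborhood; a standard continuity argument shows this holds for the entire interval $[0,1]$, giving $\phi_H^1=\phi$ on $K$. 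For the Hofer norm, the choice $c(t)\leq m_\chi(t)$ together with $\chi\geq0$ ensures $H_t\geq0$ on $\operatorname{supp}\chi$ and $H_t\equiv0$ off it, whence $\inf_M H_t=0$ and $\sup_M H_t\leq M_\chi(t)-c(t)$. Integrating and using $M_\chi\leq\sup_M G_t$, $m_\chi\geq\inf_M G_t$,
\[\Vert H\Vert\leq\int_0^1\bigl(M_\chi(t)-c(t)\bigr)dt\leq\Vert G\Vert+\int_0^1\bigl(m_\chi(t)-c(t)\bigr)dt<\Vert G\Vert+\eps/2<C.\]

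The one subtlety I anticipate is the smoothness of $c$: the functions $m_\chi,M_\chi$ need not be smooth, only continuous, so the smoothing step and the order of quantifiers in the estimate above have to be handled carefully to ensure both $c\leq m_\chi$ pointwise and the integral bound. Everything else is routine, and no new ideas beyond the standard ``cut off a generating Hamiltonian near the relevant trajectories'' construction are needed.
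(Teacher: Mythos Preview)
Your argument is correct and follows the same basic strategy as the paper: pick an almost-optimal Hamiltonian, recenter by a function of $t$, and multiply by a cutoff that is identically~$1$ on the trajectories through $K$. The only real difference is the choice of the centering function $c(t)$. You take $c$ to be a smooth function lying below $m_\chi(t)=\inf_{\operatorname{supp}\chi}G_t$, which forces $H_t\geq0$ and costs you an $\eps/2$-smoothing argument. The paper instead fixes a single point $x_0\in M$ and sets $c(t):=\wt H(t,x_0)$; this is automatically smooth, and since $f\in[0,1]$ multiplies a quantity that may take either sign, one gets directly $\inf_M\wt H_t-\wt H(t,x_0)\leq H_t\leq\sup_M\wt H_t-\wt H(t,x_0)$, hence $\Vert H\Vert\leq\Vert\wt H\Vert$ with no extra slack. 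This sidesteps precisely the smoothing subtlety you flagged, but your version is perfectly sound once that step is carried out as you outline.
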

\begin{proof}[Proof of Lemma \ref{le:H}]\setcounter{claim}{0} Let $M,\om,\phi,K$ and $C$ be as in the hypothesis. We choose a smooth function $\wt H\Colon [0,1]\x M\to\R$ that generates $\phi$ and satisfies $\Vert\wt H\Vert<C$. We also fix an open neighborhood $U\sub M$ of $K$ with compact closure, and we define $K':=\bigcup_{t\in[0,1]}\phi_{\wt H}^t(\bar U)$. We choose an open neighborhood $V\sub M$ of $K'$ such that $\bar V$ is compact. It follows from a $C^\infty$-version of Urysohn's Lemma for $\R^n$ (see for example Theorem 1.1.3 p.4 in \cite{KP}) and a partition of unit argument that there exists a smooth function $f\Colon M\to [0,1]$ such that $f^{-1}(0)=M\wo V$ and $f^{-1}(1)=K'$. We fix a point $x_0\in M$ and define 
\[H\Colon [0,1]\x M\to\R,\quad H(t,x):=f(x)\big(\wt H(t,x)-\wt H(t,x_0)\big).\]
Then the support of $H$ is contained in $\bar V$ and hence compact. Furthermore, for $t\in[0,1]$ and $x\in \bar U$ we have $H(t,\phi_{\wt H}^t(x))=\wt H(t,\phi_{\wt H}^t(x))-\wt H(t,x_0)$. It follows that $X_{H(t,\cdot)}(x)=X_{\wt H(t,\cdot)}(x)$, for $x\in K'$. This implies that $\phi_H^1(x)=\phi_{\wt H}^1(x)=\phi(x)$, for $x\in\bar U$, and therefore the first condition in (\ref{eq:H}) holds. Finally, observe that 
\[\max_{x\in M}H(t,x)\leq\sup_{x\in M}\wt H(t,x)-\wt H(t,x_0).\]
Combining this with a similar inequality for $\min_{x\in M}H(t,x)$, it follows that $\Vert H\Vert\leq\Vert\wt H\Vert$. Since $\Vert\wt H\Vert<C$, the second condition in (\ref{eq:H}) follows. This proves Lemma \ref{le:H}.
\end{proof}
The next lemma was used in the proof of Theorem \ref{thm:f}. Let $(M,\om)$ be a closed presymplectic manifold, and $H\sub TM$ an $\om$-horizontal distribution.  We denote by $\pi:(TM^{\om})^*\to M$ the canonical projection. We define $\iota_H$ and $\Om_{\om,H}$ as in (\ref{eq:iota H TM},\ref{eq:Om om H}). 
\begin{lemma}\label{le:f M} Let $f\in C^\infty(M,\R)$, $U\sub (TM^{\om})^*$ be an open neighborhood of the zero section on which $\Om_{\om,H}$ is non-degenerate, and $X$ the Hamiltonian vector field on $U$ generated by $f\circ \pi:M\to \R$ with respect to $\Om_{\om,H}$. Then $\pi_*X(x)\in H_{\pi(x)}$, for every $x\in U$. 
\end{lemma}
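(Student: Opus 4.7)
The plan is to deduce the statement by contracting the defining equation $\iota_X\Om_{\om,H} = d(f\circ\pi)$ against vectors tangent to the fibers of $\pi\Colon(TM^\om)^*\to M$. Fix $x\in U$, set $y:=\pi(x)$, and let $Y\in\ker\pi_*(x)$ be any vertical vector. Since $d(f\circ\pi)(x)Y=df(y)\pi_*Y=0$, the definition of $X$ gives $\Om_{\om,H}(X(x),Y)=0$. Writing $\Om_{\om,H}=\pi^*\om+\iota_H^*\om_\can$, the first summand contributes $\om(\pi_*X(x),\pi_*Y)=0$ because $Y$ is vertical, so the whole task reduces to showing that $\iota_H^*\om_\can(X(x),Y)=0$ implies $\pr^H_y\pi_*X(x)=0$, as this is exactly the statement $\pi_*X(x)\in\ker\pr^H_y=H_y$.

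Next I would compute $\iota_H^*\lam_\can$, where $\lam_\can$ is the canonical one-form on $T^*M$. Unwinding the definitions, using $\pi_{T^*M}\circ\iota_H=\pi$ and the fiberwise formula $\iota_H(z,\al)=(z,\al\circ\pr^H_z)$, one finds
\[(\iota_H^*\lam_\can)_{(z,\al)}(Z)=\al\bigl(\pr^H_z\pi_*Z\bigr)\quad\text{for all }Z\in T_{(z,\al)}(TM^\om)^*.\]
In particular $\iota_H^*\lam_\can$ vanishes on the zero section $N$. Since $\iota_H^*\om_\can=-d(\iota_H^*\lam_\can)$, I would now take $Y$ vertical at $x=(y,\al)$ and identify it canonically with an element $\wt Y\in(T_yM^\om)^*$ via the vector bundle structure. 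A short calculation in local trivialising coordinates on $(TM^\om)^*$ (equivalently, the standard fact that $\om_\can(U,V)=\pm\wt V(\pi_{T^*M,*}U)$ when $V$ is a vertical vector on $T^*M$, applied to the image vectors $(\iota_H)_*X(x)$ and $(\iota_H)_*Y$, which is vertical with vertical part $\wt Y\circ\pr^H_y$) yields
\[\iota_H^*\om_\can\bigl(X(x),Y\bigr)=\pm\,\wt Y\bigl(\pr^H_y\,\pi_*X(x)\bigr).\]

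Combining with the vanishing from the first paragraph, $\wt Y(\pr^H_y\pi_*X(x))=0$ for every $\wt Y\in(T_yM^\om)^*$, forcing $\pr^H_y\pi_*X(x)=0$ and hence $\pi_*X(x)\in H_y$. The one step that requires genuine care is the coordinate computation of $\iota_H^*\om_\can$ on a vertical tangent vector, but this is routine because $\iota_H$ is a fibrewise linear bundle map over $M$, so the relevant contribution is entirely governed by the vertical part of the cotangent bundle geometry and by the projection $\pr^H$; no derivatives of $\pr^H$ enter, since we are pairing against a purely vertical vector $Y$.
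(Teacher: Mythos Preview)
Your proposal is correct and follows essentially the same approach as the paper: pair the defining equation for $X$ against vertical vectors $Y=i\beta$ (with $\beta\in(T_yM^\om)^*$), observe that the $\pi^*\om$-term drops out, and reduce to the identity $\iota_H^*\om_\can(X(x),i\beta)=\beta\,\pr^H_y\pi_*X(x)$. The paper obtains this identity slightly more directly by computing $(\iota_H)_*i\beta=i'(\beta\circ\pr^H_y)$ and invoking the standard formula $\om_\can(w,i'\gamma)=\gamma(\pi'_*w)$ on $T^*M$, rather than passing through $\iota_H^*\lam_\can$ and local coordinates; in particular the sign is $+$, and your remark that no derivatives of $\pr^H$ appear is exactly the content of the paper's observation that $(\iota_H)_*$ is fibrewise linear on vertical vectors.
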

\begin{proof}[Proof of Lemma \ref{le:f M}]\setcounter{claim}{0} We fix $x:=(y,\al)\in U$, and denote by $\pr^H:T_yM\to T_yM^\om$ the canonical projection along $H$. We denote by $\pi':T^*M\to M$ the canonical projection, and by $i:(T_yM^\om)^*\to T_x(TM^\om)^*$ and $i':T^*_yM\to T_{\iota_H(x)}T^*M$ the canonical inclusions. 
\begin{claim}\label{claim:v al} For every $v\in T_xU$ and $\be\in (T_{y}M^{\om})^*$, we have 
\[\Om_{\om,H}(v,i\be)=\be\pr^H\pi_*v.\] 
\end{claim}
\begin{proof}[Proof of Claim \ref{claim:v al}] We have $\pi_*i\be=0$, and 
\[{\iota_H}_*i\be=\left.\frac{d}{dt}\right|_{t=0}\iota_H\big(y,\al+t\be\big)=\left.\frac{d}{dt}\right|_{t=0}\big(y,(\al+t\be)\pr^H\big)=i'(\be\pr^H).\]
It follows that 
\begin{eqnarray*}\Om_{\om,H}(v,i\be)&=&\om\big(\pi_*v,\pi_*i\be)+\om_\can\big({\iota_H}_*v,{\iota_H}_*i\be\big)\\
&=&0+\om_\can\big({\iota_H}_*v,i'(\be\pr^H)\big)\\
&=&\be\pr^H\pi'_*{\iota_H}_*v\\
&=&\be\pr^H\pi_*v.\end{eqnarray*}
This proves Claim \ref{claim:v al}.
\end{proof}
Claim \ref{claim:v al} implies that 
\[0=f_*\pi_*i\be=d(f\circ\pi)(x)i\be=\Om_{\om,H}(X(x),i\be)=\be\pr^H\pi_*X(x),\]
for every $\be\in (T_{y}M^{\om})^*$. It follows that $\pr^H\pi_*X(x)=0$, i.e. $\pi_*X(x)\in H_{y}$. This proves Lemma \ref{le:f M}.
\end{proof}
\subsection{Foliations}\label{subsec:fol}
In this subsection Proposition \ref{prop:hol} is proved. This result was used to define the linear holonomy of a foliation. The second result of this subsection is an estimate for the distance between the initial and end point of a path $x$ in a foliation, provided that these points lie in the same leaf, and $x$ is tangent to a given horizontal distribution. For the proof of Proposition \ref{prop:hol} we need the following lemma. Let $(M,\F)$ be a foliated manifold and $(U,\phi)\in\F$ a chart. We write $\phi=:(\phi^\xi,\phi^\eta)\Colon U\to\R^n=\R^{n-k}\x\R^k$. 
\begin{lemma}\label{le\Colon const} Let $F\sub M$ be a leaf of $\F$, $a\leq b$, and $u\Colon [a,b]\to F\cap U$ a continuous map. Then $\phi^\xi\circ u\Colon [a,b]\to \R^{n-k}$ is locally constant. 
\end{lemma}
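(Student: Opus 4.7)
The plan is to show that $\phi^\xi \circ u$ takes values in a countable subset of $\R^{n-k}$, which combined with continuity of this map and connectedness of $[a,b]$ will force it to be constant, hence in particular locally constant.

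First I would decompose $F \cap U$ into plaques. For each $\xi \in \R^{n-k}$, the slice $S_\xi := \phi^{-1}(\{\xi\} \times \R^k) \cap U$ is a smooth $k$-dimensional submanifold whose tangent space at every point coincides with $T\F$. Hence any path within a connected component of $S_\xi$ is a smooth tangent path, so by the definition of a leaf each such connected component lies entirely in a single leaf of $\F$. Consequently $F \cap U$ is the disjoint union of those connected components of the various $S_\xi$'s which meet $F$; these are the plaques of $F$ in $U$, and $\phi^\xi(F \cap U)$ coincides with the set of $\xi$-values appearing among them.

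The main step, and the principal obstacle, is to show that this set of $\xi$-values is countable. Equipping $F$ with its leaf topology (in which plaques form a basis of open sets), $F$ becomes a smooth, connected, second countable manifold: this is a standard fact about foliations on second countable ambient manifolds, which I would cite from \cite{MM}. Since the plaques of $F$ in $U$ form a pairwise disjoint family of nonempty open subsets of $F$, and $F$ is second countable, this family must be at most countable. Thus $\phi^\xi(F \cap U) \sub \R^{n-k}$ is countable, hence totally disconnected. The continuous image of the connected set $[a,b]$ under $\phi^\xi \circ u$ therefore lies in a totally disconnected set yet must itself be connected, so is a single point; we conclude $\phi^\xi \circ u$ is constant on $[a,b]$.
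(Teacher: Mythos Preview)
Your proof is correct and follows essentially the same route as the paper: both arguments use second countability of the leaf $F$ in its intrinsic leaf topology to conclude that $\phi^\xi(F\cap U)\sub\R^{n-k}$ is at most countable, and then invoke connectedness of $[a,b]$. The only cosmetic difference is that the paper picks a countable basis of plaques for $\tau^\F_F$ and uses chart compatibility to see $\phi^\xi$ is constant on each basis element, whereas you work directly with the plaques of the single chart $(U,\phi)$ and use that a pairwise disjoint family of nonempty open sets in a second countable space is countable; the substance is identical.
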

\begin{proof}[Proof of Lemma \ref{le:const}]\setcounter{claim}{0} Let $(M,\F)$ be a foliated manifold. By definition, the leaf topology on $F$ is the topology $\tau^\F_F$ generated by the sets $\phi^{-1}(\{0\}\x\R^k)$, where $(U,\phi)\in\F$ is such that $\phi^{-1}(\{0\}\x\R^k)\sub F$. It is second countable, see for example Lemma 1.3. on p. 11 in the book \cite{Mol}. It follows that there exists a countable collection of surjective foliation charts $\phi_i:U_i\to\R^n$ $(i\in\N)$, such that $\big(\phi_i^{-1}(\{0\}\x\R^k)\big)_{i\in\N}$ is a basis for $\tau^\F_F$. Let $(U,\phi)\in\F$. Then $U\cap F\in\tau^\F_F$, and therefore there exists a subset $S\sub\N$ such that $U\cap F=\bigcup_{i\in S}U_i$. For each $i\in S$ compatibility of $\phi$ and $\phi_i$ implies that $\phi^\xi$ is constant on $U_i$. It follows that $\phi^\xi(U\cap F)\sub\R^{n-k}$ is at most countable. The statement of Lemma \ref{le:const} follows from this. 
\end{proof}
\begin{proof}[Proof of Proposition \ref{prop:hol}]\setcounter{claim}{0} Let $M,\F,F,a,b,x,N$ and $y_0$ be as in the hypothesis. To prove {\bf statement (\ref{prop:hol:u})}, let $T\Colon T_{y_0}N\to T_{x(a)}M$ be a linear map.
\begin{claim}\label{claim:f Y X} There exists a smooth map $f\Colon N\to M$ such that $f(y_0)=x(a)$ and $df(y_0)=T$. 
\end{claim}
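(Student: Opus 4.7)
The plan is to construct $f$ as a purely local perturbation of the constant map $y\mapsto x(a)$, built out of a chart on $N$ near $y_0$, the exponential map of some Riemannian metric on $M$ at $x(a)$, and a cutoff function to make the construction global.

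First I would fix an arbitrary Riemannian metric $g$ on $M$ and denote by $\exp_{x(a)}\colon W\to M$ the associated exponential map, defined on some open neighborhood $W\sub T_{x(a)}M$ of $0$ on which it is a diffeomorphism onto its image. Next choose a smooth chart $\phi\colon U\to\R^{\dim N}$ on $N$ with $y_0\in U$, $\phi(y_0)=0$, and identify $T_{y_0}N$ with $\R^{\dim N}$ via $d\phi(y_0)$. Set $\wt T:=T\circ d\phi(y_0)^{-1}\colon\R^{\dim N}\to T_{x(a)}M$, which is linear, hence in particular maps some ball $B_\eps(0)$ into $W$.

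Then I would choose a smooth cutoff $\chi\colon N\to[0,1]$ supported in $\phi^{-1}(B_\eps(0))$ and identically equal to $1$ on some smaller neighborhood of $y_0$, and define
\[f(y):=\left\{\begin{array}{ll}\exp_{x(a)}\!\big(\chi(y)\cdot\wt T(\phi(y))\big),&y\in U,\\ x(a),&y\in N\wo U.\end{array}\right.\]
Since $\chi$ is supported inside $U$, the two definitions agree on the overlap (both equal $x(a)$ there), so $f$ is a well-defined smooth map $N\to M$. By construction $f(y_0)=\exp_{x(a)}(0)=x(a)$.

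Finally, to check the differential, note that on the open neighborhood of $y_0$ where $\chi\const1$ we have $f=\exp_{x(a)}\circ\wt T\circ\phi$. The chain rule together with $d\exp_{x(a)}(0)=\id_{T_{x(a)}M}$ gives $df(y_0)=\wt T\circ d\phi(y_0)=T$, as required. There is no real obstacle here; the only point of care is ensuring the two branches of the definition of $f$ match smoothly, which is automatic because $\chi$ vanishes outside $\phi^{-1}(B_\eps(0))$ and $\exp_{x(a)}(0)=x(a)$.
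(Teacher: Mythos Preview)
Your proof is correct and follows essentially the same approach as the paper: both construct $f$ by composing a chart on $N$ near $y_0$ with the linear map $T$ and a local diffeomorphism from $T_{x(a)}M$ to a neighborhood of $x(a)$ in $M$, then use a cutoff to extend globally by the constant $x(a)$. The only cosmetic differences are that the paper uses an abstract diffeomorphism $\phi\colon T_{x(a)}M\to U$ with $d\phi(0)=\id$ (defined on the whole tangent space, so no domain check is needed) whereas you use the exponential map, and the paper places the cutoff in $T_{y_0}N$ rather than in $N$.
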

\begin{proof}[Proof of Claim \ref{claim:f Y X}] We choose a quadruple $(U,V,\phi,\psi)$, where $U\sub M$ and $V\sub N$ are neighborhoods of $x(a)$ and $y_0$, respectively, and $\phi\Colon T_{x(a)}M\to U$ and $\psi\Colon V\to T_{y_0}N$ are diffeomorphisms, such that the following holds. Identifying $T_0(T_{x(a)}M)=T_{x(a)}M$ and $T_0(T_{y_0}N)=T_{y_0}N$, we have
\[\phi(0)=x(a),\quad d\phi(0)=\id_{T_{x(a)}M},\quad \psi(y_0)=0,\quad d\psi(y_0)=\id_{T_{y_0}N}.\] 
Furthermore, we choose a function $\rho\in C^\infty(T_{y_0}N, [0,1])$ such that $\rho=1$ in a neighborhood of $0$, and $\rho=0$ outside some compact subset of $T_{y_0}N$. We define $f(y):=\phi\circ(\rho\cdot T)\circ\psi(y)$ for $y\in V$, and $f(y):=y_0$, for $y\in N\wo V$. This map has the required properties. This proves Claim \ref{claim:f Y X}.
\end{proof}
We denote by $\pi_2\Colon [a,b]\x M\to M$ the projection onto the second factor.
\begin{claim}\label{claim:X} There exists a smooth section $s\Colon [a,b]\x M\to \pi_2^*T\F$ of compact support, such that $s(t,x(t))=\dot x(t)$, for every $t\in[a,b]$. 
\end{claim}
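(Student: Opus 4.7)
The approach is a standard partition-of-unity construction: build $s$ locally in foliation charts along $x([a,b])$, then glue with a partition of unity and cut off with a bump function to achieve compact support. Since the graph $\Gamma := \{(t, x(t)) \,|\, t \in [a,b]\}$ is compact in $[a,b] \x M$, all coverings in what follows can be taken finite.

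First I would cover the compact set $x([a,b])$ by finitely many foliation charts $(U_\al, \phi_\al)_{\al=1}^N$, writing $\phi_\al =: (\xi_\al, \eta_\al)\Colon U_\al \to \R^{n-k} \x \R^k$, so that on $U_\al$ the sub-bundle $T\F$ is spanned by the coordinate fields $\partial/\partial \eta_\al^1, \ldots, \partial/\partial \eta_\al^k$. Set $J_\al := x^{-1}(U_\al) \sub [a,b]$ (open). By Lemma \ref{le:const} the map $\xi_\al \circ x$ is locally constant on $J_\al$, so on each connected component $x$ sits in a single plaque; combined with smoothness of $x$, this yields smooth functions $c_\al^j := (\eta_\al^j \circ x)'\Colon J_\al \to \R$ with $\dot x(t) = \sum_j c_\al^j(t)\, \partial/\partial \eta_\al^j |_{x(t)}$. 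Define a smooth local section
\[s_\al \Colon V_\al := J_\al \x U_\al \to \pi_2^* T\F, \qquad s_\al(t, y) := \sum_{j=1}^k c_\al^j(t)\, \frac{\partial}{\partial \eta_\al^j}\bigg|_y,\]
which satisfies $s_\al(t, x(t)) = \dot x(t)$ for every $t \in J_\al$.

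Next, $\Gamma$ is compact and contained in $\bigcup_\al V_\al$, so I can choose an open neighborhood $W$ of $\Gamma$ whose closure is compact and contained in $\bigcup_\al V_\al$, a smooth partition of unity $\{\chi_\al\}$ on $\bigcup_\al V_\al$ subordinate to $\{V_\al\}$, and a smooth bump function $\rho\Colon [a,b] \x M \to [0,1]$ with $\rho \const 1$ on $W$ and compact support inside $\bigcup_\al V_\al$. Define
\[s(t, y) := \rho(t, y) \sum_\al \chi_\al(t, y)\, s_\al(t, y)\]
on $\bigcup_\al V_\al$ (the sum is well-defined because every $s_\al(t, y)$ lies in the common fiber $T_y \F$), and extend by zero off this set. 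Then $s$ is a smooth section of $\pi_2^* T\F$ with compact support, and since $\rho = 1$ and $\sum_\al \chi_\al = 1$ on a neighborhood of $\Gamma$, one has $s(t, x(t)) = \sum_\al \chi_\al(t, x(t))\, \dot x(t) = \dot x(t)$.

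The only non-routine step, and hence the ``hard part'' of the argument, is the smoothness of the coefficient functions $c_\al^j$; this is precisely where Lemma \ref{le:const} enters, forcing $x$ to stay in a single plaque $\phi_\al^{-1}(\{\xi_0\} \x \R^k)$ on each component of $J_\al$ so that $\eta_\al \circ x$ is a well-defined smooth curve in $\R^k$. The remainder is a textbook partition-of-unity/bump-function construction and presents no genuine obstacle.
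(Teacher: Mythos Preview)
Your argument is correct and follows the same overall scheme as the paper: cover $x([a,b])$ by foliation charts, build local sections, glue by a partition of unity, and cut off with a bump function. The local step, however, is organised differently. The paper parametrises the charts by points $t\in S\sub[a,b]$, sets $A_t:=\{(t',\phi_t\circ x(t'))\,|\,x(t')\in U_t\}\sub[a,b]\x\R^n$ (closed, since $\phi_t$ is surjective), and invokes a Whitney-type smooth extension to get $f_t\Colon[a,b]\x\R^n\to\R^k$ agreeing with $(\phi_t^\eta)_*\dot x$ along $A_t$; the partition of unity then lives on $M$ alone. Your version avoids the extension theorem entirely by defining the local section on the product $J_\al\x U_\al$ via the $t$-only coefficients $c_\al^j(t)=(\eta_\al^j\circ x)'(t)$, at the cost of taking the partition of unity on $[a,b]\x M$. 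Your route is slightly more elementary; the paper's route yields a section defined on all of $[a,b]\x U$ before cutting off.

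One small correction: the smoothness of $c_\al^j$ is not the hard part, and Lemma~\ref{le:const} is not needed here. Since $x$ is a smooth path in the leaf $F$, we have $\dot x(t)\in T_{x(t)}\F=d\phi_\al(x(t))^{-1}(\{0\}\x\R^k)$ directly, so $(\xi_\al\circ x)'\equiv0$ and $c_\al^j=(\eta_\al^j\circ x)'$ is smooth simply as the derivative of a composition of smooth maps. There is no genuine obstacle anywhere in this claim; it is a routine construction.
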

\begin{proof}[Proof of Claim \ref{claim:X}] For every $t\in[a,b]$ we choose a foliation chart $\phi_t:U_t\to\R^n$, such that $U_t\sub M$ is an open neighborhood of $x(t)$. Shrinking $U_t$ and reparametrizing $\phi_t$, we may assume that $\phi_t$ is surjective. We choose a finite subset $S\sub [a,b]$ such that $x([a,b])\sub U:=\bigcup_{t\in S}U_t$. We fix $t\in S$, and define 
\[A_t:=\big\{\big(t',\phi_t\circ x(t')\big)\,\big|\,x(t')\in U_t\big\}.\]
Since $\phi_t$ is surjective, $A_t$ is a closed subset of $[a,b]\x\R^n$. We choose a smooth extension $f_t\Colon [a,b]\x \R^n\to \R^k$ of the map $A_t\ni\big(t',\phi_t\circ x(t')\big)\mapsto (\phi^\eta_t)_*\dot x(t')\in\R^k$. We also fix a partition of unity $(\rho_t)_{t\in S}$ for $U$, subordinate to $(U_t)_{t\in S}$, and a smooth map $\rho\Colon U\to[0,1]$ with compact support, such that $\rho|_{x([a,b])}=1$. We define 
\[s\Colon [a,b]\x M\to \pi_2^*T\F,\, s(t',x'):=\rho(x')\sum_{t\in S}\rho_t(x')d\phi_t(x')^{-1}\big(0,f_t(t',\phi_t(x'))\big).\]
Here each summand on the right hand side is defined to be $0$ if $x'\not\in U_t$. The map $s$ has the required properties. This proves Claim \ref{claim:X}.
\end{proof}
We choose a map $f$ and a section $s$ as in Claims \ref{claim:f Y X} and \ref{claim:X}. Since $s$ has compact support, there exists a unique solution $u\Colon [a,b]\x N\to M$ of the equations
\[\d_tu(t,y)=s(t,y),\quad u(a,y)=f(y),\quad \forall t\in[a,b],\,y\in N.\]
This map has the required properties. This proves (\ref{prop:hol:u}). 

To prove {\bf statement (\ref{prop:hol:u u'})}, let $u$ and $u'$ be as in the hypothesis. Consider
\[S:=\big\{t\in[a,b]\,\big|\,\pr^\F d(u(t,\cdot))(y_0)=\pr^\F d(u'(t,\cdot))(y_0)\big\}.\]
By (\ref{eq:u u' a}) this set contains $a$. Furthermore, it is a closed subset of $[a,b]$. 
\begin{claim}\label{claim:S} $S$ is open. 
\end{claim}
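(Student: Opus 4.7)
\begin{pf}[Proof plan for Claim \ref{claim:S}] The plan is to work in a single foliation chart around a point $t_0 \in S$ and exploit Lemma \ref{le:const} to show that the quantity $\pr^\F d(u(t,\cdot))(y_0)$ (and likewise for $u'$) is locally constant in $t$, so that its value at $t_0$ propagates to a neighborhood.

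Fix $t_0\in S$. By the first condition in (\ref{eq:u 0 x}) we have $u(t_0,y_0)=x(t_0)=u'(t_0,y_0)$, so I may choose a single foliation chart $(U,\phi)\in\F$ with $U$ an open neighborhood of $x(t_0)$, and write $\phi=(\phi^\xi,\phi^\eta)\Colon U\to\R^{n-k}\x\R^k$ as in Lemma \ref{le:const}. By continuity of $u$ and $u'$ I can then pick a relatively open interval $I\sub[a,b]$ containing $t_0$ and an open neighborhood $V\sub N$ of $y_0$ such that $u(I\x V),u'(I\x V)\sub U$. Under the chart, the tangent distribution $T\F$ corresponds to $\{0\}\x\R^k$, so on $U$ the composition $\pr^\F d\phi^{-1}$ is identified with the projection onto the first factor, and $\pr^\F d(u(t,\cdot))(y_0)$ is canonically identified with the differential $d(\phi^\xi\circ u(t,\cdot))(y_0)\Colon T_{y_0}N\to\R^{n-k}$ (and analogously for $u'$).

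Next, I would apply Lemma \ref{le:const} leafwise in the second variable. For each fixed $y\in V$, condition (\ref{eq:u 0 x}) gives $\F_{u(t,y)}=\F_{u(a,y)}$, so the path $t\mapsto u(t,y)$ lies in a single leaf $F_y$ of $\F$, and it stays in $U$ while $t\in I$. By Lemma \ref{le:const} applied to this path, $\phi^\xi\circ u(\cdot,y)\Colon I\to\R^{n-k}$ is locally constant, and since $I$ can be shrunk to a connected interval around $t_0$, it is constant on $I$. Thus $\phi^\xi\circ u(t,y)=\phi^\xi\circ u(t_0,y)$ for every $(t,y)\in I\x V$. Differentiating both sides with respect to $y$ at $y_0$ yields
\[d(\phi^\xi\circ u(t,\cdot))(y_0)=d(\phi^\xi\circ u(t_0,\cdot))(y_0),\quad\forall t\in I,\]
and the identical argument gives the analogous equality for $u'$.

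Translating back via the canonical identification, this says $\pr^\F d(u(t,\cdot))(y_0)=\pr^\F d(u(t_0,\cdot))(y_0)$ and similarly for $u'$, for all $t\in I$. Since $t_0\in S$ the two $t_0$-values agree, and hence the two $t$-values agree for every $t\in I$, proving $I\sub S$. The only subtle step is the identification, across different $t$, of $\pr^\F d(u(t,\cdot))(y_0)$ with the chart-based expression $d(\phi^\xi\circ u(t,\cdot))(y_0)$: this is exactly where a common foliation chart around $x(t_0)$ is needed, and where Lemma \ref{le:const}, which makes the $\xi$-component leaf-constant, does the real work. \end{pf}
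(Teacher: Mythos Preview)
Your proposal is correct and follows essentially the same route as the paper: pick a foliation chart around $x(t_0)$, use Lemma \ref{le:const} to freeze $\phi^\xi\circ u(\cdot,y)$ in $t$, differentiate in $y$, and transport equality from $t_0$ to nearby $t$. The paper is only slightly more explicit at the point you flag as ``subtle'': it names the isomorphism $\Phi_{x_0}\Colon N_{x_0}\F\to\R^{n-k}$ determined by $\Phi_{x_0}\pr^\F_{x_0}=d\phi^\xi(x_0)$, so that your informal equality $\pr^\F d(u(t,\cdot))(y_0)=\pr^\F d(u(t_0,\cdot))(y_0)$ (whose two sides live in different normal fibers) becomes the literal equality $d\phi^\xi(x(t))d(u(t,\cdot))(y_0)=d\phi^\xi(x(t_0))d(u(t_0,\cdot))(y_0)$ in $\R^{n-k}$, and the final step is applying $\Phi_{x(t)}^{-1}$.
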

\begin{proof}[Proof of Claim \ref{claim:S}] Let $t_0\in S$. We choose a chart $(U,\phi)\in\F$ such that $x(t_0)\in U$, and a number $\eps>0$ such that $x\big([t_0-\eps,t_0+\eps]\cap[a,b]\big)\sub U$. We define
\[V:=\big\{y\in N\,\big|\,u(t,y),u'(t,y)\in U,\,\forall t\in[t_0-\eps,t_0+\eps]\cap[a,b]\big\}.\]
This is an open subset of $N$. Furthermore, by the first condition in (\ref{eq:u 0 x}) we have $y_0\in V$. Let $x_0\in U$. Then by definition, the map $d\phi^\xi(x_0)\Colon T_{x_0}M\to\R^{n-k}$ is surjective and has kernel $T_{x_0}\F$. It follows there exists a unique linear isomorphism $\Phi_{x_0}\Colon N_{x_0}\F=T_{x_0}M/T_{x_0}\F\to \R^{n-k}$ satisfying $\Phi_{x_0}\pr^\F_{x_0}=d\phi^\xi(x_0)$. We fix $t\in [t_0-\eps,t_0+\eps]$. By Lemma \ref{le:const}, we have
\[\phi^\xi\circ u(t,y)=\phi^\xi\circ u(t_0,y),\quad \phi^\xi\circ u'(t,y)=\phi^\xi\circ u'(t_0,y),\]
for every $y\in V$. It follows that on $TV$, we have
\begin{eqnarray}\nn&d\phi^\xi(x(t))d(u(t,\cdot))=d\phi^\xi(x(t_0))d(u(t_0,\cdot))=\Phi_{x(t_0)}\pr^\F_{x(t_0)}d(u(t_0,\cdot)),&\\
\nn &d\phi^\xi(x(t))d(u'(t,\cdot))=\Phi_{x(t_0)}\pr^\F_{x(t_0)}d(u'(t_0,\cdot)).&\end{eqnarray}
Since $t_0\in S$, this implies that $d\phi^\xi(x(t))d(u(t,\cdot))(y_0)=d\phi^\xi(x(t))d(u'(t,\cdot))(y_0)$. Using the equality $\Phi_{x(t)}^{-1}d\phi^\xi(x(t))=\pr^\F_{x(t)}$, it follows that $\pr^\F d(u(t,\cdot))(y_0)=\pr^\F d(u'(t,\cdot))(y_0)$. Hence $S$ is open. This proves Claim \ref{claim:S}.
\end{proof}
Using Claim \ref{claim:S}, it follows that $S=[a,b]$. This proves (\ref{prop:hol:u u'}) and completes the proof of Proposition \ref{prop:hol}.
\end{proof}
The next result was used in the proof of Theorem \ref{thm:f}. %
\begin{prop}\label{prop:X F} Let $M$ be a closed manifold, $\F$ a regular foliation on $M$, $H\sub TM$ an $\F$-horizontal distribution, and $g$ a Riemannian metric on $M$. Then there exists a constant $C$ such that for every $t\geq0$ and every $x\in C^\infty([0,t],M)$ the following holds. If $\dot x(s)\in H_{x(s)}$, for every $s\in[0,t]$, and $x(t)\in\F_{x(0)}$, then $d(x(0),x(t))\leq C\ell(x)^2$. Here $\ell$ and $d$ denote the length and distance functions with respect to $g$ respectively.
\end{prop}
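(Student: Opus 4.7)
The plan is as follows. First, since $M$ is closed and $\F$ is regular, Ehresmann's theorem (applied as in Section~\ref{sec:back} to the projection $\pi_B\Colon M\to B:=M/R^\F$) shows that $B$ is a smooth manifold and $\pi_B$ is a locally trivial smooth fiber bundle whose fibers are precisely the leaves of $\F$. The $\F$-horizontal distribution $H$ is then a connection on this bundle. I would cover $M$ by finitely many open sets $U_1,\ldots,U_N$ of the form $U_i=\pi_B^{-1}(V_i)$, with $V_i\sub B$ open and equipped with a bundle trivialization $U_i\iso V_i\x F_i$ and with coordinates on $V_i$ and $F_i$, and let $\delta>0$ be a Lebesgue number for this cover with respect to $g$.

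Next I would split according to the size of $\ell(x)$. If $\ell(x)\geq\delta$ then $d(x(0),x(t))\leq\operatorname{diam}(M,g)\leq\big(\operatorname{diam}(M,g)/\delta^2\big)\ell(x)^2$, which is harmless. If $\ell(x)<\delta$, then the image $x([0,t])$ has diameter less than $\delta$, hence lies in some $U_i$; write $x(s)=(b(s),f(s))\in V_i\x F_i$. In $U_i$ the leaves of $\F$ are exactly the sheets $\{b\}\x F_i$, so the hypothesis $x(t)\in\F_{x(0)}$ forces $b(t)=b(0)$. Since $H$ is $\F$-horizontal, in this trivialization there is a smooth bundle map $A\Colon V_i\x F_i\to\operatorname{Hom}(TV_i,TF_i)$ (the connection one-form) such that $\dot x(s)\in H_{x(s)}$ if and only if $\dot f(s)=A(b(s),f(s))\dot b(s)$.

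The key computation is then

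\[f(t)-f(0)=A(b(0),f(0))\big(b(t)-b(0)\big)+\int_0^t\big[A(b(s),f(s))-A(b(0),f(0))\big]\dot b(s)\,ds,\]

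obtained by writing $f(t)-f(0)=\int_0^tA(b,f)\dot b\,ds$ and adding and subtracting $A(b(0),f(0))\dot b(s)$ inside the integrand. The first term vanishes because $b(t)=b(0)$. The second is bounded in norm by $\operatorname{Lip}_{U_i}(A)\cdot\ell(x)\cdot\ell(b)$: indeed $|A(b(s),f(s))-A(b(0),f(0))|\leq\operatorname{Lip}_{U_i}(A)\cdot\ell(x|_{[0,s]})\leq\operatorname{Lip}_{U_i}(A)\cdot\ell(x)$, and the base length satisfies $\ell(b)\leq C_i'\ell(x)$ because $\pi_{B*}\Colon H_y\to T_{\pi_B(y)}B$ is a fiberwise isomorphism whose operator norm is bounded uniformly on the compact set $\overline{U_i}$. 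Hence $|f(t)-f(0)|=O(\ell(x)^2)$; since $b(t)=b(0)$, the Euclidean chart distance between $x(0)$ and $x(t)$ is of the same order.

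Finally, the Riemannian distance $d$ is uniformly comparable to the Euclidean chart distance on each $\overline{U_i}$, so taking the maximum of the resulting constants over the finitely many $i$ and with the large-$\ell$ bound produces a single constant $C$ valid for all paths. The conceptual heart of the argument is the cancellation in the displayed identity, which reflects the familiar fact that for a $C^1$-connection the vertical displacement of a horizontal lift of a base loop is quadratic, not linear, in the length of the loop. The delicate point requiring care is the patching: one must verify that whenever $\ell(x)<\delta$ the endpoints really do share a $B$-coordinate inside a single trivializing neighborhood, which uses both the Lebesgue number estimate and the fact that fibers of $\pi_B$ coincide with the leaves of $\F$. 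Everything else is standard compactness bookkeeping.
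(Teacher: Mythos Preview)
Your overall strategy coincides with the paper's: cover $M$ by foliation charts adapted to the bundle $\pi_B\colon M\to B$, split according to whether $\ell(x)$ lies below a Lebesgue-type threshold, and for short paths use the base-coordinate return $b(t)=b(0)$ together with the connection form to bound the fiber displacement quadratically. The paper carries out exactly these steps (its Claim produces the adapted charts, and the case $\ell(x)>\eps$ is disposed of trivially just as you do).

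Where you differ is in the quadratic estimate itself. You freeze $A$ at the initial point and use a Lipschitz remainder, so that $f(t)-f(0)=A(b(0),f(0))\big(b(t)-b(0)\big)+O(\ell(x)^2)$ with the first term vanishing. The paper instead introduces the linear homotopy $(u,v)(s,t)=(a,b)(0)+s\big((a,b)(t)-(a,b)(0)\big)$ and performs an integration by parts on $\int_0^{t_0}\al_\phi(a,b)\dot a\,dt$, extracting the bound from a double integral involving $d\al_\phi$. Your route is more elementary and direct; the paper's is a Stokes-type manipulation that gives the same order with an explicit constant in terms of $\sup|d\al_\phi|$ rather than a Lipschitz constant (equivalent on compacta).

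One point to tighten: you cannot simultaneously take $U_i=\pi_B^{-1}(V_i)\cong V_i\times F_i$ with $F_i$ a full leaf \emph{and} put global coordinates on $F_i$, since $F_i$ is a closed manifold. You need the image of $x$ to sit inside a genuine foliation chart $V_i\times W_j$ with $W_j$ a coordinate patch on the fiber. In such a chart a single leaf may meet $U_i$ in several plaques, so ``the leaves are exactly the sheets $\{b\}\times F_i$'' is no longer literally true; nevertheless $b(t)=b(0)$ still holds, because the base coordinate factors through the global projection $\pi_B$ and $x(0),x(t)$ lie in the same $\pi_B$-fiber. The paper handles this by explicitly constructing charts (via the bundle trivialization) in which each leaf meets the chart domain connectedly. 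Your ``delicate point'' paragraph shows you already see the issue; with that adjustment your argument is complete.
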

\begin{proof}[Proof of Proposition \ref{prop:X F}]\setcounter{claim}{0} We denote by $n$ and $k$ the dimension of $M$ and of the leaves of $\F$ respectively. 
\begin{claim}\label{claim:A} There exists a finite atlas $\A$ of surjective foliation charts $\phi:U\to\R^n$, such that $\bigcup_{(U,\phi)\in\A}\phi^{-1}(B_1)=M$, and for every $(U,\phi)\in\A$ and $x\in U$ the set $\phi(\F_x\cap U)$ is connected.
\end{claim}
\begin{proof}[Proof of Claim \ref{claim:A}] Since by assumption $\F$ is regular it follows from Lemma \ref{le:X/R}(\ref{le:X/R:fol}) below that there exists a smooth structure on the set $M'$ of leaves of $\F$, such that the canonical projection $\pi:M\to M'$ is a submersion. Since $M$ is closed, a result by Ehresmann implies that $\pi$ is a fiber bundle. (See the proposition on p.~31 in \cite{Eh}.) Let $x\in M$. We choose a local trivialization $\psi_x:U'_x\x \F_x\to M$ of $\pi$, where $U'_x$ is an open subset of $M'$, such that $x\in \psi_x(U'_x\x\F_x)$. By combining $\psi_x$ with charts of $M'$ and $\F_x$ containing the points $\pi(x)$ and $x$, respectively, we obtain a foliation chart $(U_x,\phi_x)$ for $M$ such that $x\in U_x$ and $\phi^i_x(y)=\phi_x^i(x)$, for every $y\in U_x\cap \F_x$, $i=n-k+1,\ldots,n$. By shrinking the domain and target of $\phi_x$ and rescaling, we may assume \Wlog that $\phi_x(U_x)=\R^n$. By compactness of $M$ there exists a finite subset $S\sub M$ such that $\bigcup_{x\in S}\phi_x^{-1}(B_1)=M$. The set $\A:=\big\{(U_x,\phi_x)\,|\,x\in S\}$ has the required properties. This proves Claim \ref{claim:A}.
\end{proof}
We choose an atlas $\A$ as in Claim \ref{claim:A}. For $(U,\phi)\in\A$ we define $\eps_\phi$ to be the distance (with respect to $g$) between $\phi^{-1}(\bar B_1)$ and $M\wo \phi^{-1}(B_2)$, and we set $\eps:=\min\{\eps_\phi\,|\,(U,\phi)\in\A\}$. Let $(U,\phi)\in\A$. We define the map $\al_\phi:\R^n\to\R^{k\x (n-k)}$ as follows. Namely, for $x\in \R^n$ we define $\al_\phi(x)$ to be the unique real $k\x (n-k)$ matrix satisfying $\big\{(w,\al_\phi(x)w)\,|\,w\in\R^{n-k}\big\}=\phi_*H_{\phi^{-1}(x)}$. Since $H$ is horizontal, $\al$ is well-defined. We denote by $|\cdot|_0$ the standard norm on Euclidian space, by $|v|$ the norm of vector $v\in TM$ with respect to $g$, and by $\pi_1:\R^n=\R^{n-k}\x\R^k\to\R^{n-k}$ the canonical projection onto the first component. We choose a constant $C$ such that 
\begin{eqnarray}\label{eq:v 0}&|v|_0\leq C\big|d(\phi^{-1})(x)v\big|,&\\
\label{eq:x y} &d\big(\phi^{-1}(x),\phi^{-1}(y)\big)\leq C|x-y|_0,&\\\label{eq:d al}&\big|(d\al_\phi(x)v)w\big|_0\leq C|v|_0\,|w|_0,&\end{eqnarray} 
for every $(U,\phi)\in\A$, $x,y\in\bar B_2$, $v\in\R^n$, and $w\in\R^{n-k}$. Let $t_0\in[0,\infty)$, and $x\in C^\infty([0,t_0],M)$ a path such that $\dot x(t)\in H_{x(t)}$, for every $t\in[0,t_0]$, and $x(t_0)\in\F_{x(0)}$.
\begin{claim}\label{claim:d C} If $\ell(x)\leq \eps$ then $d(x(0),x(t_0))\leq 2C^3\ell(x)^2$. 
\end{claim}
\begin{proof}[Proof of Claim \ref{claim:d C}] Assume that $\ell(x)\leq \eps$. We choose a chart $(U,\phi)\in\A$ such that $x(0)\in\phi^{-1}(B_1)$. By the choice of $\eps$ it follows that $x(t)\in\phi^{-1}(B_2)$, for every $t\in[0,t_0]$. Hence we may define $(a,b):=\phi\circ x:[0,t_0]\to\R^n=\R^{n-k}\x\R^k$. By the choice of $\A$, the set $\phi(\F_{x(0)}\cap U)$ is connected, hence it equals $\{a(0)\}\x\R^k$. Since $x(t_0)\in\F_{x(0)}$, it follows that $a(t_0)=a(0)$. Therefore, 
\begin{equation}\label{eq:a b t 0}\big|(a,b)(t_0)-(a,b)(0)\big|_0=|b(t_0)-b(0)|_0.\end{equation}
By the definition of $\al_\phi$ and the hypothesis $\dot x(t)\in H_{x(t)}$, we have
\begin{equation}\label{eq:b t 0}b(t_0)-b(0)=\int_0^{t_0}\dot b(t)dt=\int_0^{t_0}\al_\phi\circ (a,b)(t)\dot a(t)dt.
\end{equation}
We define $(u,v):[0,1]\x[0,t_0]\to\R^n=\R^{n-k}\x\R^k$ by
\[(u,v)(s,t):=(a,b)(0)+s\big((a,b)(t)-(a,b)(0)\big).\]
Then $(u,v)(1,t)=(a,b)(t)$ and $\d_tu(0,t)=0$, for every $t\in[0,t_0]$, and therefore
\begin{eqnarray}\nn&&\int_0^{t_0}\al_\phi\circ (a,b)(t)\dot a(t)dt\\
\nn&&=\int_0^{t_0}\int_0^1\d_s\big(\al_\phi\circ(u,v)\d_tu\big)\,ds\,dt\\
\nn&&=\int_0^{t_0}\int_0^1\Big(\d_s\big(\al_\phi\circ(u,v)\big)\d_tu-\d_t\big(\al_\phi\circ(u,v)\big)\d_su\\
\nn&&\phantom{\int_0^{t_0}\int_0^1}+\d_t\big(\al_\phi\circ(u,v)\d_su\big)\Big)\,ds\,dt\\
&&=\label{eq:int int}\int_0^{t_0}\int_0^1\Big(\big(d\al_\phi\,\d_s(u,v)\big)\d_tu-\big(d\al_\phi\,\d_t(u,v)\big)\d_su\Big)\,ds\,dt\\
\nn&&\phantom{=}+\int_0^1\al_\phi\circ(u,v)\d_su\,ds\Big|_{t=0}^{t_0}.
\end{eqnarray}
Since $a(t_0)=a(0)$, we have $\d_su(s,t)=0$, for $s\in[0,1]$ and $t=0,t_0$. Therefore, the last term in (\ref{eq:int int}) vanishes. Using (\ref{eq:b t 0}) and (\ref{eq:d al}), it follows that 
\begin{eqnarray}\nn|b(t_0)-b(0)|_0&\leq&C\int_0^{t_0}\int_0^1\big(|\d_s(u,v)|_0|\d_tu|_0+|\d_t(u,v)|_0|\d_su|_0\big)\,ds\,dt\\
\nn&\leq&2C\int_0^{t_0}\int_0^1|\d_s(u,v)|_0|\d_t(u,v)|_0\,ds\,dt\\
\nn&\leq& 2C\int_0^{t_0}\big|(a,b)(t)-(a,b)(0)\big|_0\left|\frac d{dt}(a,b)(t)\right|_0\,dt\\
\nn&\leq&2C^2\max\big\{\big|(a,b)(t)-(a,b)(0)\big|_0\,\big|\,t\in[0,t_0]\big\}\ell(x)\\
\nn&\leq&2C^2\int_0^{t_0}\left|\frac d{dt}(a,b)(t)\right|_0dt\,\ell(x)\\
\label{eq:2C3}&\leq&2C^3\ell(x)^2.
\end{eqnarray}
Here in the fourth and the last step we used (\ref{eq:v 0}). Combining (\ref{eq:x y},\ref{eq:a b t 0},\ref{eq:2C3}), we obtain
\[d(x(t_0),x(0))\leq C\big|(a,b)(t_0)-(a,b)(0)\big|_0\leq 2C^3\ell(x)^2.\]
This proves Claim \ref{claim:d C}. 
\end{proof}
Note that in the case $\ell(x)>\eps$ we have $d(x(0),x(t_0))\leq\eps^{-1}\ell(x)^2$. Combining this with Claim \ref{claim:d C}, we obtain $d(x(0),x(t_0))\leq \max\big\{2C^3,\eps^{-1}\big\}\ell(x)^2$. This concludes the proof of Proposition \ref{prop:X F}.
\end{proof}

\subsection{Further auxiliary results}\label{subsec:further}
\begin{lemma}[Smooth structures on quotients]\label{le:X/R} Let $M$ be a set with a smooth structure and $R$ an equivalence relation on $M$. Then the following holds.
\begin{enua}\item\label{le:X/R:subm} There is at most one smooth structure on $M':=M/R$ such that the quotient map $\pi\Colon M\to M'$ is a submersion.\\

\noindent \emph{Assume now that $R$ is the leaf relation of some foliation $\F$ on $M$. Then:}
\item\label{le:X/R:ker} If there is a smooth structure on $M'$ as in (\ref{le:X/R:subm}) then $\ker d\pi(x)=T_x\F$, for every $x\in M$.
\item\label{le:X/R:Hausdorff} $R$ is a closed subset of $M\x M$ if and only if $M'$ (equipped with the quotient topology) is Hausdorff.\\

\noindent \emph{Assume that $R$ is the leaf relation of some foliation $\F$ and the induced topology on $M$ is Hausdorff and second countable. Then:}
\item\label{le:X/R:second} $M'$ is second countable.
\item \label{le:X/R:fol} The following conditions are equivalent.
\begin{enui}
\item\label{le:X/R:smooth} There exists a smooth structure on $M'$ as in (\ref{le:X/R:subm}).
\item\label{le:X/R:sub} $R$ is a submanifold of $M\x M$. 
\end{enui}
\item\label{le:X/R:hol} Assume that there is a smooth structure $\A$ on $M'$ as in (\ref{le:X/R:subm}). Let $F$ be a leaf of $\F$, $x\in C^\infty([0,1],F)$, and $v_i\in T_{x(i)}M$, for $i=0,1$, be such that $\pr^\F v_1=\hol^\F_x\pr^\F v_0$. Then $d\pi(x(0))v_0=d\pi(x(1))v_1,$ where the differentials are defined with respect to $\A$. 
\end{enua}
\end{lemma}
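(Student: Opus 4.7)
The plan is to dispatch parts (a)--(d) with general submersion and quotient-topology arguments, to reduce (e) to a Godement-style chart construction on transversals, and to deduce (f) directly from the construction of holonomy in Proposition \ref{prop:hol}. For (a) I would use the universal property of surjective submersions: a map $f\colon (M',\A) \to Z$ into any manifold is smooth iff $f \circ \pi$ is, so applying this with $Z = (M',\A')$ and $f = \id$ (and symmetrically) forces $\id$ to be a diffeomorphism between any two submersion-compatible smooth structures. For (b), the equality $M' = M/R$ means each $\pi$-fiber is exactly one leaf, so $T_x\F \sub \ker d\pi(x)$; since $\pi$ is a submersion the fiber dimension equals $\dim M - \dim M'$ and also equals $\dim \F$, so the inclusion is an equality. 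For (c) and (d) I first verify that $\pi$ is an open map: the saturation of any open $U \sub M$ is a union of plaques in every foliation chart, hence open. Then $R = (\pi\x\pi)^{-1}(\Delta_{M'})$ together with continuity of $\pi\x\pi$ gives $M'$ Hausdorff $\Then$ $R$ closed, while openness of $\pi$ lets one separate preimages to get the converse; openness of $\pi$ also sends a countable base of $M$ to a countable base of $M'$, proving (d).

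Part (e) is the main work. The implication (i)$\Then$(ii) is immediate: $\pi\x\pi$ is a submersion, so $R$, being the preimage of the diagonal $\Delta_{M'}$, is an embedded submanifold of $M\x M$ of codimension $\dim M'$. For (ii)$\Then$(i) I would invoke a Godement-type argument. Assuming $R$ is a submanifold, a foliation-chart computation shows that for any sufficiently small chart $U$ one has $R \cap (U\x U)$ equal to the plaque-diagonal relation; consequently any transversal $T \sub U$ to $\F$ at $x$ injects into $M'$ via $\pi|_T$ and yields a candidate chart on $M'$ near $\pi(x)$. Compatibility of two such charts reduces to the smooth dependence of the unique plaque-intersection of a given leaf with a transversal, which again uses that $R$ is embedded. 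Hausdorffness and second countability of $M'$ come from (c), (d) (after noting that an embedded leaf relation is locally, and hence globally, closed), and the submersiveness of $\pi$ is built into the local form $(\xi,\eta)\mapsto\xi$.

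For (f), fix any linear section $T\colon N_{x(0)}\F \to T_{x(0)}M$ of $\pr^\F$ and apply Proposition \ref{prop:hol}(a) to produce $u\colon [0,1]\x N_{x(0)}\F \to M$ satisfying (\ref{eq:u 0 x}) with $d(u(0,\cdot))(0) = T$. Since each curve $t\mapsto u(t,y)$ stays in a single leaf, $\pi\circ u(t,y)$ is independent of $t$; differentiating at $y = 0$ yields $d\pi(x(1))\,du(1,\cdot)(0) = d\pi(x(0))\,T$. Evaluating both sides on $\pr^\F v_0 \in N_{x(0)}\F$ and using the definition $\hol^\F_x = \pr^\F du(1,\cdot)(0)$ together with (b) to absorb the vectors $T\pr^\F v_0 - v_0$ and $du(1,\cdot)(0)(\pr^\F v_0) - v_1$ (both in $T\F \sub \ker d\pi$) delivers $d\pi(x(0))v_0 = d\pi(x(1))v_1$.

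The main obstacle is the converse in (e): manufacturing a coherent manifold atlas on $M'$ from only the submanifold hypothesis on $R$, in particular verifying that two transversal charts have smooth transition maps. The remaining parts are essentially bookkeeping with submersion universal properties, open-map quotients, and the construction of holonomy from Proposition \ref{prop:hol}.
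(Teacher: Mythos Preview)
Your handling of (a)--(d) and (f) is essentially the paper's own: the paper cites standard references for (a), (c), (d), proves (b) by the regular-value argument plus $T_xF=T_x\F$, and for (f) picks a map $u$ as in the definition of $\hol^\F_x$, uses that $\pi\circ u(t,y)$ is independent of $t$, differentiates at $y=0$, and kills the $T\F$-components via (b)---exactly your plan.

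For (e) the paper takes a different and shorter route than your transversal-chart construction. It invokes Godement's theorem (recorded as Remark~\ref{rmk:X}, from \cite{AMR}): for an arbitrary equivalence relation, a submersion-compatible smooth structure on the quotient exists iff $R$ is a submanifold \emph{and} the first projection $\pi_1\colon R\to M$ is a submersion. Thus (i)$\Leftrightarrow$(ii) for foliations reduces to showing that $\pi_1$ is \emph{always} a submersion when $R$ is a leaf relation. The paper does this by reusing Proposition~\ref{prop:hol}(\ref{prop:hol:u}): given $(x_0,x_1)\in R$, choose a path $x$ in the leaf from $x_0$ to $x_1$, set $N=M$, $y_0=x_0$, $T=\id$, and the resulting $u$ gives a map $y\mapsto(u(0,y),u(1,y))$ into $R$ whose composition with $\pi_1$ has derivative $\id$ at $x_0$. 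This bypasses entirely the transition-map verification you identify as the main obstacle. Your direct approach is viable, but the smoothness of the map between two transversal charts is literally the holonomy map, and making that rigorous requires the same Proposition~\ref{prop:hol}-type argument the paper uses; packaging it via Godement is cleaner and exploits machinery already in place. One small correction: your claim that an embedded leaf relation is automatically closed is false in general (embedded submanifolds need not be closed), though this is harmless here since the paper's notion of ``smooth structure'' in (e) does not require Hausdorffness.
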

\begin{rmk}\label{rmk:X} Let $M$ be a set with a smooth structure and $R$ an equivalence relation on $M$. We denote now by $\pi_1\Colon R\to M$ the projection onto the first factor. Then by a theorem by Godement, condition (i) of part (\ref{le:X/R:fol}) above holds if and only if (ii) is satisfied and $\pi_1$ is a submersion. (See for example Theorem 3.5.25 in the book \cite{AMR}.) \end{rmk}
\begin{proof}[Proof of Lemma \ref{le:X/R}]\setcounter{claim}{0} Let $M$ be a set with a smooth structure, and $R$ an equivalence relation on $M$. {\bf Statement (\ref{le:X/R:subm})} follows from Proposition 3.5.21(iii) in the book \cite{AMR}. 

Assume now that $R$ is the leaf relation of some foliation $\F$ on $M$. 

In order to prove {\bf (\ref{le:X/R:ker})}, let $F\in M'$ be a leaf. Since $F$ is a regular value of $\pi$, the Implicit Function Theorem implies that $\pi^{-1}(F)=F\sub M$ is a submanifold, and $T_xF=\ker d\pi(x)$, for every $x\in F$. On the other hand, it follows from the definitions that $T_xF=T_x\F$. This proves (\ref{le:X/R:ker}).

To see {\bf(\ref{le:X/R:Hausdorff})}, observe that the map $\pi$ is open. This follows for example from the corollary on p.~19 in \cite{Mol}. (The proof goes through if $M$ is not Hausdorff or second countable.) Therefore, (\ref{le:X/R:Hausdorff}) follows from an elementary argument, see for example Lemma 2.3 p. 60 in \cite{Bo}. 

Assume now also that the topology on $M$ is Hausdorff and second countable. Using openness of $\pi$, {\bf statement (\ref{le:X/R:second})} follows from Lemma 2.4 p. 60 in \cite{Bo}. Furthermore, by Remark \ref{rmk:X}, {\bf(\ref{le:X/R:fol})} is a consequence of the following.
\begin{claim}\label{claim:pr 1} The projection $\pi_1$ is a submersion. 
\end{claim}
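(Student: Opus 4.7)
The plan is to establish the claim by showing that for every $(p,q)\in R$ and every $v\in T_pM$ there exists $w\in T_qM$ with $(v,w)\in T_{(p,q)}R$; this amounts to surjectivity of $d\pi_1(p,q)$. Since Claim \ref{claim:pr 1} is invoked only as a step in proving the implication $(\ref{le:X/R:sub})\then(\ref{le:X/R:smooth})$ of part (\ref{le:X/R:fol}) via Remark \ref{rmk:X}, we may and do assume that $R$ is an embedded submanifold of $M\x M$, so that $T_{(p,q)}R\sub T_pM\oplus T_qM$ is well-defined.

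To produce the desired $w$, fix $(p,q)\in R$ and $v\in T_pM$. Let $F$ be the common leaf containing $p$ and $q$, and pick a smooth path $\ga\Colon[0,1]\to F$ with $\ga(0)=p$ and $\ga(1)=q$. Apply Proposition \ref{prop:hol}(\ref{prop:hol:u}) with $x:=\ga$, $N:=\R$, $y_0:=0$, and the linear map $T\Colon T_0\R\iso\R\to T_pM$ defined by $T(1):=v$. This produces a smooth $u\in C^\infty([0,1]\x\R,M)$ satisfying $u(\cdot,0)=\ga$, $\F_{u(t,s)}=\F_{u(0,s)}$ for all $(t,s)\in[0,1]\x\R$, and $\d_su(0,0)=v$.

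Define $f\Colon\R\to M\x M$ by $f(s):=\big(u(0,s),u(1,s)\big)$. The leaf-preservation property forces $u(0,s)$ and $u(1,s)$ to lie on a common leaf for every $s$, so $f(\R)\sub R$; since $R$ is embedded in $M\x M$, the map $f$ is smooth as a curve in $R$. Because $f(0)=(p,q)$ and the first component of $f'(0)$ equals $\d_su(0,0)=v$, the vector $f'(0)\in T_{(p,q)}R$ projects to $v$ under $d\pi_1(p,q)$. Hence $\pi_1$ is a submersion, proving the claim. The only nontrivial input is Proposition \ref{prop:hol}(\ref{prop:hol:u}), which supplies the leaf-preserving deformation of $\ga$ with prescribed initial transverse velocity at $p$; everything else is formal, so no substantial obstacle is expected.
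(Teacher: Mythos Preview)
Your proof is correct and follows essentially the same approach as the paper: both arguments choose a leafwise path from $p$ to $q$, invoke Proposition \ref{prop:hol}(\ref{prop:hol:u}) to produce a leaf-preserving family $u$ with prescribed initial derivative, and then observe that $s\mapsto\big(u(0,s),u(1,s)\big)$ lands in $R$ and differentiates to a right inverse of $d\pi_1$. The only difference is cosmetic: the paper takes $N:=M$ and $T:=\id_{T_pM}$, obtaining a local section $f\Colon M\to R$ whose differential is a right inverse of $d\pi_1$ for all directions at once, whereas you take $N:=\R$ and handle one vector $v$ at a time. Your explicit remark that one may assume $R$ is an embedded submanifold (since the claim is only needed for the implication $(\ref{le:X/R:sub})\Rightarrow(\ref{le:X/R:smooth})$ via Godement's theorem) is a useful clarification that the paper leaves implicit.
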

\begin{pf}[Proof of Claim \ref{claim:pr 1}] Let $(x_0,x_1)\in R$. We choose a path $x\in C^\infty([0,1],\F_{x_0})$ such that $x(i)=x_i$, for $i=0,1$. We set $a:=0$, $b:=1$, $N:=M$, $y_0:=x_0$, and $T:=\id_{T_{x_0}M}$. Applying Proposition \ref{prop:hol}(\ref{prop:hol:u}) there exists a map $u\in C^\infty([0,1]\x M,M)$ such that the conditions (\ref{eq:u 0 x}) and (\ref{eq:d u a y}) hold. By (\ref{eq:u 0 x}) the map $f\Colon M\to M\x M$ defined by $f(y):=\big(u(0,y),u(1,y)\big)$ takes values in $R$ and satisfies $f(x_0)=(x_0,x_1)$. Equality (\ref{eq:d u a y}) implies that 
\[d\pi_1(x_0,x_1)df(x_0)=d(\pi_1\circ f)(x_0)=d(u(0,\cdot))(x_0)=\id_{T_{x_0}M}.\]
It follows that $d\pi_1(x_0,x_1)$ is surjective, hence $\pi_1$ is a submersion. This proves Claim \ref{claim:pr 1}.
\end{pf}

We show {\bf(\ref{le:X/R:hol}).} We choose a map $u\in C^\infty\big([a,b]\x N_{x(a)}\F,M\big)$ as in the definition (\ref{eq:hol F x}) of $\hol^\F_x$. Then for every $s\in\R$, we have
\[\pi\circ u(1,\pr^\F sv_0)=\pi\circ u(0,\pr^\F sv_0).\]
Differentiating this identity with respect to $s$, we obtain
\begin{equation}\label{eq:d pi x 1}\pi_*u(1,\cdot)_*\pr^\F v_0=\pi_*u(0,\cdot)_*\pr^\F v_0.\end{equation}
On the other hand, the equality $\pr^\F_{x(0)}d(u(0,\cdot))(0)=\id_{N_{x(0)}\F}$ implies that $v_0-u(0,\cdot)_*\pr^\F v_0\in T_{x(0)}\F$. Using statement (\ref{le:X/R:ker}), it follows that 
\begin{equation}\label{eq:pi u 0}\pi_*u(0,\cdot)_*\pr^\F v_0=\pi_*v_0.\end{equation}
By assumption we have
\[\pr^\F v_1=\hol_x^\F\pr^\F v_0=\pr^\F u(1,\cdot)_*\pr^\F v_0.\]
Using again statement (\ref{le:X/R:ker}), it follows that $\pi_*v_1=\pi_*u(1,\cdot)_*\pr^\F v_0$. Combining this with (\ref{eq:d pi x 1}) and (\ref{eq:pi u 0}), we obtain $\pi_*v_0=\pi_*v_1,$ as claimed. This proves (\ref{le:X/R:hol}) and concludes the proof of Lemma \ref{le:X/R}. 
\end{proof}
The next lemma was used in the proof of Theorem \ref{thm:f}. Let $M$ be a $C^1$-manifold, and $X$ a complete $C^1$-vector field on $M$. If $g$ is a Riemannian metric on $M$ then we denote by $\ell$ and $d$ the induced length funcional and distance function, respectively. Furthermore, for a pair $(t,x_0)\in[0,\infty)\x M$ we write $\ell(t,x_0):=\ell\big([0,t]\ni s\mapsto\phi_X^s(x_0)\in M\big)$.
\begin{lemma}[Fast almost periodic orbits]\label{le:ell f} Let $(M,g)$ be a Riemannian $C^2$-manifold, $X$ a $C^1$-vector field on $M$ with compact support, and $f:[0,\infty)\to[0,\infty)$ a continuous function such that $f(0)=0$. Then there exists a constant $\eps>0$ such that for every $(t,x_0)\in[0,\eps]\x M$ satisfying $d\big(x_0,\phi_X^t(x_0)\big)\leq \ell(t,x_0)f(\ell(t,x_0))$, we have $X(x_0)=0$. 
\end{lemma}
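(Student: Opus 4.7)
The plan is to reduce the statement to a uniform comparison $d(x,\phi_X^t(x)) \geq c\cdot \ell(t,x)$, valid whenever $X(x)\neq 0$ and $t\in(0,\eps_0]$, for constants $c,\eps_0>0$ depending only on $(M,g,X)$. Granting this, the lemma follows immediately: by continuity of $f$ and $f(0)=0$, shrink $\eps\in(0,\eps_0]$ so that $f(s)<c$ on $[0,M\eps]$, where $M:=\sup|X|$. If $(t,x_0)\in[0,\eps]\x M$ satisfies the hypothesis and $X(x_0)\neq 0$, then $\ell(t,x_0)\leq Mt\leq M\eps$ gives $f(\ell(t,x_0))<c$, while the uniform comparison gives $d\geq c\ell$; together these force $c\ell\leq \ell f(\ell)<c\ell$, a contradiction. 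Hence the hypothesis forces $X(x_0)=0$.

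The comparison itself is obtained from an angular rigidity argument for the orbit's tangent direction. Along $\gamma(s):=\phi_X^s(x_0)$ with $X(x_0)\neq 0$, the zero set of $X$ is flow-invariant, so $X(\gamma(s))\neq 0$ for all $s$ and the unit tangent $v(s):=X(\gamma(s))/|X(\gamma(s))|$ is well-defined. A direct calculation yields
\[
\nabla_{\gamma'(s)}v \;=\; (\nabla X)(v) \;-\; \bigl\langle v,(\nabla X)(v)\bigr\rangle v, \qquad \bigl|\nabla_{\gamma'(s)}v\bigr|\leq 2L,
\]
where $L$ is any uniform upper bound for $|\nabla X|_{\op}$ on the compact set $K:=\operatorname{supp}(X)$. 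Writing $P(s)$ for parallel transport along $\gamma$, the second bound gives the angular estimate $\angle\bigl(v(s),P(s)v(0)\bigr)\leq 2Ls$. For $t\leq \eps_0$ small enough that $\gamma([0,t])$ lies within the geodesic normal neighborhood of $x_0$ (uniformly in $x_0\in K$, using compactness of $K$ and a uniform positive lower bound on the injectivity radius on a neighborhood of $K$), one works in normal coordinates at $x_0$ and bounds the projection of $\phi_X^t(x_0)-x_0$ along $v(0)$ from below by
\[
\int_0^t |X(\gamma(s))|\cos(2Ls)\,ds \;\geq\; \cos(2Lt)\cdot \ell(t,x_0).
\]
Choosing $\eps_0\leq \pi/(8L)$ forces $\cos(2Lt)\geq \sqrt{2}/2$, and after absorbing the uniformly controlled Riemannian-to-Euclidean discrepancy one obtains the desired inequality $d(x_0,\phi_X^t(x_0))\geq c\,\ell(t,x_0)$ with an explicit $c>0$.

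The main technical hurdle is carrying out this last step uniformly in $x_0$: the deviation of $d\exp_{x_0}^{-1}$ from $P(s)^{-1}$ along the orbit, together with the discrepancy between Euclidean and Riemannian norms in normal coordinates, must vanish with $t$ uniformly in $x_0\in K$. This follows in a standard way from the $C^2$-regularity of $g$ combined with uniform lower bounds on the injectivity radius and uniform upper bounds on the sectional curvature on a neighborhood of $K$, both of which are available since $K$ is compact.
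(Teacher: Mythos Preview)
Your approach is correct and genuinely different from the paper's. You argue geometrically: along the orbit the unit tangent $v(s)$ rotates with speed bounded by the Lipschitz constant of $X$, so for short times the orbit is nearly straight and one obtains a uniform comparison $d(x_0,\phi_X^t(x_0))\geq c\,\ell(t,x_0)$ whenever $X(x_0)\neq0$ and $t\in(0,\eps_0]$; the hypothesis $d\leq\ell f(\ell)$ then contradicts this once $f<c$ on the relevant range. The paper instead works in a fixed finite atlas of $C^2$-charts and applies a Wirtinger-type inequality: with $y:=\phi\circ\gamma$ and $v:=y(0)-y(t)$, the shifted velocity $s\mapsto\dot y(s)+v/t$ has vanishing mean on $[0,t]$, hence $L^1$-norm at most $t\,\Vert\ddot y\Vert_{L^1}\leq t\sup|d(\phi_*X)|\cdot\Vert\dot y\Vert_{L^1}$; together with the bound on $|v|$ coming from the hypothesis this yields $\Vert\dot y\Vert_{L^1}\leq(\textrm{small})\,\Vert\dot y\Vert_{L^1}$, forcing $\dot y\equiv0$ and hence $X(x_0)=0$.

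One point to tighten in your write-up: under the stated hypothesis that $g$ is only $C^2$, the exponential map is only $C^1$, so normal coordinates are $C^1$ charts and the usual second-order expansion $g_{ij}(x)=\delta_{ij}+O(|x|^2)$ is not directly available. This is not fatal---your angular-rigidity step only needs that the coordinate tangent $\dot y/|\dot y|$ changes slowly, which follows from a $C^1$-bound on $\phi_*X$ in \emph{any} chart, together with uniform equivalence of $|\cdot|_g$ and the Euclidean norm on a compact set---but as written your argument leans on normal-coordinate estimates that require justification at this regularity. The paper's chart-based argument sidesteps this entirely. In return, your route gives a cleaner geometric picture and yields the comparison $d\geq c\,\ell$ as a byproduct of independent interest.
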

The proof of this lemma is based on an idea from the proof of Proposition 17, p. 184 in the book \cite{HZ}. We need the following.
\begin{rmk}\label{rmk:t x} If $t\geq0$, and $x\in W^{1,1}\big([0,t],\R\big)$ is such that $\int_0^tx(s)ds=0$ then 
\begin{equation}\label{eq:x x dot}\Vert x\Vert_{L^1([0,t])}\leq t\Vert\dot x\Vert_{L^1([0,t])}.\end{equation}
To see this, note that $\int_0^tx(s)ds=0$ implies that there is a point $t_0\in[0,t]$ such that $x(t_0)=0$. It follows that for every $s\in[0,t]$, we have $|x(s)|=\big|\int_{t_0}^s\dot x(s)ds\big|\leq\int_0^t |\dot x(s)|ds$. Inequality (\ref{eq:x x dot}) is a consequence of this.
\end{rmk}
\begin{proof}[Proof of Lemma \ref{le:ell f}]\setcounter{claim}{0} Let $M,g,X$ and $f$ be as in the hypothesis. We denote by $K\sub M$ the support of $X$, by $n$ the dimension of $M$, for a vector $v\in TM$ we denote by $|v|$ its norm with respect to $g$, and for $v\in\R^n$ we define $|v|_1:=\sum_{i=1}^n|v^i|$. We choose a finite set $\A$ of surjective $C^2$-charts $\phi:U\sub M\to\R^n$, such that $K\sub\bigcup_{(U,\phi)\in\A}\phi^{-1}(B_1)$. Furthermore, we choose a constant $C$ such that 
\begin{equation}\label{eq:|x-y|}|x-y|_1\leq Cd\big(\phi^{-1}(x),\phi^{-1}(y)\big),\quad\big|d(\phi^{-1})(x)v\big|\leq C|v|_1,
\end{equation}
for every $(U,\phi)\in\A$, $x,y\in \bar B_2$, and $v\in \R^n$. For $(U,\phi)\in\A$ we define $\eps_\phi$ to be the distance between $\phi^{-1}(\bar B_1)$ and $M\wo \phi^{-1}(B_2)$, and we define 
\begin{equation}\label{eq:eps}\eps_1:=\min\left\{\frac{\eps_\phi}{\max_K|X|}\,\Big|\,(U,\phi)\in\A\right\}. 
\end{equation}
For a linear map $T:\R^n\to\R^n$ we denote 
\[|T|_\op:=\max\big\{|Tv|_1\,\big|\,v\in\R^n,\,|v|_1=1\big\}.\]
Since by assumption $f$ is continuous and $f(0)=0$, there exists $\eps_2>0$ such that for every $(U,\phi)\in\A$, and every $a\in\big[0,\eps_2\max_K|X|\big]$, we have
\begin{equation}\label{eq:eps Vert}\eps_2\max_{\bar B_2}|d(\phi_*X)|_\op+C^2f(a)<1.
\end{equation}
We define $\eps:=\min\{\eps_1,\eps_2\}$. Let $(t,x_0)\in[0,\eps]\x M$ be such that $d\big(x_0,\phi_X^t(x_0)\big)\leq \ell(t,x_0)f(\ell(t,x_0))$. We define $x:[0,t]\to M$ by $x(s):=\phi_X^s(x_0)$. We choose a chart $(U,\phi)\in\A$ such that $x_0\in\phi^{-1}(B_1)$. For $s\in[0,t]$, we have 
\[d(x_0,x(s))\leq\ell(x)=\int_0^s|X\circ x(s')|\,ds'\leq \eps_1\max_K|X|\leq\eps_\phi.\]
It follows that $x([0,t])\sub \phi^{-1}(\bar B_2)$. Hence we may define 
\[y:=\phi\circ x,\quad v:=y(0)-y(t),\quad f:[0,1]\to \R^n,\,f(s):=\dot y(s)+v/t.\]
The equality $\dot x=X\circ x$ implies that $f$ is $C^1$, and that 
\begin{equation}\label{eq:dot f}\dot f=\ddot y=\big((\phi_*X)\circ y\big)\Dot=d(\phi_*X)(y)\dot y.\end{equation}
Furthermore, $\int_0^tf(s)ds=0$. Therefore, denoting by $\Vert f\Vert_1$ the $L^1$-norm of $f$ with respect to $|\cdot|_1$, Remark \ref{rmk:t x} with $x$ replaced by each component of $f$ implies that
\begin{equation}\label{eq:Vert f}\Vert f\Vert_1\leq t\Vert\dot f\Vert_1\leq t\max_{\bar B_2}|d(\phi_*X)|_\op\Vert \dot y\Vert_1.
\end{equation}
Here in the second inequality we used (\ref{eq:dot f}). By (\ref{eq:|x-y|}) and one of the hypotheses of the lemma, we have
\begin{equation}\label{eq:v 1}|v|_1\leq Cd(x_0,x(t))\leq C\ell(t,x_0)f\big(\ell(t,x_0)\big)\leq C^2\Vert\dot y\Vert_1f\big(\ell(t,x_0)\big).\end{equation}
Since $\dot y=f-v/t$, inequalities (\ref{eq:Vert f},\ref{eq:v 1}) yield
\begin{equation}\label{eq:Vert dot y}\Vert\dot y\Vert_1\leq\Vert f\Vert_1+|v|_1\leq\Big(\eps_2\max_{\bar B_2}|d(\phi_*X)|_\op+C^2f\big(\ell(t,x_0)\big)\Big)\Vert\dot y\Vert_1.\end{equation}
Since $\ell(t,x_0)\leq t\max_K|X|\leq\eps_2\max_K|X|$, inequality (\ref{eq:eps Vert}) holds with $a:=\ell(t,x_0)$. Combining this with (\ref{eq:Vert dot y}), it follows that $\Vert\dot y\Vert_1=0$. Hence $y$ is constant, and the same holds for $x=\phi^{-1}\circ y$. This proves Lemma \ref{le:ell f}. 
\end{proof}
The next lemma implies that the Hofer semi-norm given by (\ref{eq:Vert H}) is well-defined. 
\begin{lemma}\label{le:f} Let $X$ be a topological space and $f\Colon [0,1]\x X\to \R$ a continuous function. Assume that there exists a sequence of compact subsets $K_\nu\sub X$, $\nu\in\N$ such that $\bigcup_\nu K_\nu=X$. Then the map 
\[[0,1]\ni t\mapsto \sup_{x\in X}f(t,x)\]
is Borel measurable. 
\end{lemma}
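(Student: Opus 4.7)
The plan is to reduce the global supremum over $X$ to a countable supremum of continuous functions, which is automatically Borel measurable. Define, for each $\nu\in\N$,
\[g_\nu\Colon[0,1]\to\R\cup\{-\infty\},\quad g_\nu(t):=\sup_{x\in K_\nu}f(t,x).\]
Since $\bigcup_\nu K_\nu=X$, for every $t\in[0,1]$ we have $\sup_{x\in X}f(t,x)=\sup_\nu g_\nu(t)$.

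The key step is to show that each $g_\nu$ is continuous. Fix $t_0\in[0,1]$ and $\eps>0$. By continuity of $f$, for every $x\in K_\nu$ there exist open neighborhoods $I_x\sub[0,1]$ of $t_0$ and $V_x\sub X$ of $x$ with $|f(t,y)-f(t_0,x)|<\eps/2$ on $I_x\x V_x$. Compactness of $K_\nu$ gives a finite subcover $V_{x_1},\dots,V_{x_m}$ of $K_\nu$; set $I:=\bigcap_{j=1}^mI_{x_j}$. For any $t\in I$ and any $x\in K_\nu$ pick $j$ with $x\in V_{x_j}$; then
\[|f(t,x)-f(t_0,x)|\leq|f(t,x)-f(t_0,x_j)|+|f(t_0,x_j)-f(t_0,x)|<\eps.\]
Taking the supremum over $x\in K_\nu$ yields $|g_\nu(t)-g_\nu(t_0)|\leq\eps$ for $t\in I$, so $g_\nu$ is continuous at $t_0$. (If $K_\nu=\empty$, then $g_\nu\const-\infty$, which is trivially continuous as a function into $[-\infty,\infty)$.)

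Since each $g_\nu$ is continuous, hence Borel measurable as a map into $[-\infty,\infty]$, and since a countable supremum of Borel measurable extended-real-valued functions is Borel measurable (in fact, $\sup_\nu g_\nu$ is even lower semicontinuous as a supremum of continuous functions), the map $t\mapsto\sup_{x\in X}f(t,x)=\sup_\nu g_\nu(t)$ is Borel measurable. The only real content is the continuity of each $g_\nu$, which rests on compactness of $K_\nu$ via the tube-lemma argument above; everything else is standard measure theory.
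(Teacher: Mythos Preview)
Your proof is correct and follows essentially the same approach as the paper: define $g_\nu(t):=\sup_{x\in K_\nu}f(t,x)$, observe that these are continuous and that the desired supremum is $\sup_\nu g_\nu$, and conclude by the standard fact that a countable supremum of Borel functions is Borel. The paper merely asserts the continuity of $g_\nu$, whereas you supply the tube-lemma argument explicitly and also handle the edge case $K_\nu=\emptyset$.
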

\begin{proof}[Proof of Lemma \ref{le:f}]\setcounter{claim}{0} We choose a sequence $K_\nu\sub X$, $\nu\in\N$, as in the hypothesis, and we define 
\[f_\nu\Colon [0,1]\to \R,\quad f_\nu(t):=\max\big\{f(t,x)\,\big|\,x\in K_\nu\big\}.\]
Then $f_\nu$ is continuous, for every $\nu$, and $f(t)=\sup_{\nu\in\N} f_\nu(t)$, for every $t\in[0,1]$. Hence an elementary argument implies that $f$ is Borel measurable. This proves Lemma \ref{le:f}.
\end{proof}


\begin{thebibliography}{99}

\bibitem[AMR]{AMR} R. Abraham, J. E. Marsden, T. Ratiu, \emph{Manifolds, tensor analysis, and applications}, 2nd ed., Applied Mathematical Sciences, {\bf 75}, Springer, 1988. 

\bibitem[AF]{AF} P. Albers, U. Frauenfelder, \emph{Leaf-wise intersections and Rabinowitz Floer homology}, preprint, October 2008, arXiv:0810.3845v1 

\bibitem[An]{An} D. Andrica, \emph{Critical Point Theory and Some Applications}, Cluj University Press, 2005. 

\bibitem[Ar]{Ar} V. I. Arnol'd, \emph{Sur une propri\'et\'e topologique des applications globalement canoniques de la m\'ecanique classique}, C. R. Acad. Sci. Paris  {\bf 261}  (1965), 3719--3722. 

\bibitem[ACL]{ACL} M. Audin, A. Cannas da Silva and E. Lerman, \emph{Symplectic geometry of integrable Hamiltonian systems}, Advanced Courses in Mathematics, CRM Barcelona, Birkh\"auser, 2003. 

\bibitem[ALP]{ALP} M. Audin, F. Lalonde and L. Polterovich, \emph{Symplectic rigidity: Lagrangian submanifolds}, {\bf 271-321} in \emph{Holomorphic curves in symplectic geometry}, Progr. Math. {\bf 117}, Birkh\"auser, Basel, 1994. 

\bibitem[Ba]{Ba} A. Banyaga, \emph{On fixed points of symplectic maps}, Invent. Math. {\bf 56} (1980), no. 3, 215-229. 

\bibitem[Bo]{Bo} W. M. Boothby, \emph{An introduction to differentiable manifolds and Riemannian geometry}, rev. 2nd ed., Academic Press, 2003.

\bibitem[Ch]{Ch} Yu. V. Chekanov, \emph{Lagrangian intersections, symplectic energy, and areas of holomorphic curves}, Duke Math. J. {\bf 95} (1998), {\bf no. 1}, 213-226. 

\bibitem[Dr]{Dr} D. Dragnev, \emph{Symplectic rigidity, symplectic fixed points, and global perturbations of Hamiltonian systems}, Comm. Pure Appl. Math. {\bf 61} (2008), {\bf no. 3}, 346-370. 

\bibitem[Du]{Du} H. Duan, \emph{Morse functions and cohomology of homogeneous spaces}, {\bf 105-133} in \emph{Topics in cohomological studies of algebraic varieties}, Trends Math., Birkh\"auser, Basel, 2005. 

\bibitem[DV]{DV} I. A.~Dynnikov and A. P.~Veselov, \emph{Integrable Gradient Flows and Morse Theory}, Algebra i Analiz {\bf 8} (1996), no. {\bf 3}, 78--103, translation in St. Petersburg Math. J. {\bf 8} (1997), no. {\bf 3}, 429--446 

\bibitem[Eh]{Eh} C. Ehresmann, \emph{Les connexions infinit\'esimales dans un espace fibr\'e diff\'erentiable}, Colloque de topologie (espaces fibr\'es), Bruxelles, 1950, 29-55. 

\bibitem[EH]{EH} I. Ekeland, H. Hofer, \emph{Two symplectic fixed-point theorems with applications to Hamiltonian dynamics}, J. Math. Pures Appl. (9) {\bf 68} (1989), {\bf no. 4}, 467-489 (1990). 

\bibitem[Fl]{Fl} A. Floer, \emph{Morse theory for Lagrangian intersections}, J. Differential Geom. {\bf 28}, {\bf no. 3} (1988), 513-547. 

\bibitem[Fr]{Fr} U. Frauenfelder, \emph{The Arnold-Givental Conjecture and moment Floer Homology}, Int. Math. Res. Not. {\bf 42} (2004), 2179-2269.

\bibitem[Gin]{Gin} V. Ginzburg, \emph{Coisotropic intersections}, Duke Math. J. {\bf 140} (2007), {\bf no.1}, 111-163. 

\bibitem[GHV]{GHV} W. Greub, S. Halperin, R. Vanstone, \emph{Connections, curvature, and cohomology, Vol. II: Lie groups, principal bundles, and characteristic classes}, Pure and Applied Mathematics, Vol. {\bf 47-II}, Academic Press, 1973. 

\bibitem[GGK]{GGK} V. Guillemin, V. Ginzburg, and Y. Karshon, \emph{Moment maps, cobordisms, and Hamiltonian group actions}, Mathematical Surveys and Monographs {\bf 98}, AMS, Providence, RI, 2002. 

\bibitem[G\"u]{Gur} B. G\"urel, \emph{Leaf-wise coisotropic intersections}, in preparation. 

\bibitem[Ho]{Ho} H. Hofer, \emph{On the topological properties of symplectic maps}, Proc. Roy. Soc. Edinburgh Sect. {\bf A 115} (1990), {\bf no. 1-2}, 25-38. 

\bibitem[HZ]{HZ} H. Hofer, E. Zehnder, \emph{Symplectic Invariants and Hamiltonian Dynamics}, Birkh\"auser Advanced Texts: Basel Textbooks, 1994. 

\bibitem[Ke]{Ke} E. Kerman, \emph{Displacement energy of coisotropic submanifolds and Hofer's geometry}, J. Mod. Dyn. {\bf 2} (2008),  no. {\bf 3}, 471--497. 

\bibitem[KP]{KP} St. G. Krantz, H. R. Parks, \emph{The geometry of domains in space}, Birkh\"auser Advanced Texts: Basler Lehrb\"ucher, Birkh\"auser, 1999. 

\bibitem[Ma]{Ma} C.-M. Marle, \emph{Sous-vari\'et\'es de rang constant d'une vari\'et\'e symplectique}, Ast\'erisque, {\bf 107-108}, Soc. Math. France, Paris, 1983. 

\bibitem[MS]{MS} D. McDuff and D.A. Salamon, \emph{Introduction to symplectic topology}, 2nd ed., Oxford Mathematical Monographs, The Clarendon Press, Oxford University Press, 1998. 

\bibitem[MM]{MM} I. Moerdijk, J. Mr\v{c}un, \emph{Introduction to foliations and Lie groupoids}, Cambridge Studies in Advanced Mathematics, 91, Cambridge Univ. Press, 2003. 

\bibitem[Mol]{Mol} P. Molino, \emph{Riemannian foliations}, Progress in Mathematics, 73, Birkh\"auser, 1988. 

\bibitem[Mos]{Mos} J. Moser, \emph{A fixed point theorem in symplectic geometry}, Acta Math. {\bf 141} (1978), no. 1-2, 17-34. 

\bibitem[TT]{TT} C.-L. Terng and G. Thorbergsson, \emph{Taut immersions into complete Riemannian manifolds}, {\bf 181-228} in \emph{Tight and taut submanifolds}, Math. Sci. Res. Inst. Publ., {\bf 32}, Cambridge Univ. Press, 1997. 

\bibitem[Zi1]{Zi1} F. Ziltener, \emph{The Invariant Symplectic Action and Decay for Vortices}, accepted by the J. Symplectic Geom.

\bibitem[Zi2]{Zi2} F. Ziltener, \emph{A Maslov map for coisotropic submanifolds}, in preparation.

\end{thebibliography}
\end{document}